\documentclass[a4paper,Haag duality]{mathscan}
\newtheorem{thm}{Theorem}[section] 
\newtheorem{pro}[thm]{Proposition}  
\newtheorem{cor}[thm]{Corollary}    
\newtheorem{lem}[thm]{Lemma}        
\theoremstyle{definition}           
\newtheorem{rem}[thm]{Remark}       %%%%% the counter [thm] is optional 
  %%%%% here and below, but preferred
     %%%%% by the journal

%%%%%\newenvironment{myproof}{%  %%%%% If you for some reason insist on your
%%%%%\smallbreak {\sc Proof.}}   %%%%% own proof environment,
%%%%%{\smallbreak}               %%%%% please use this layout

\newcommand{\NI}{\noindent}

\newcommand{\bea}{\begin{eqnarray}}
\newcommand{\eea}{\end{eqnarray}}

\def \b #1 {\bf #1}
\newcommand{\IR}{\mathbb{R}}

\newcommand{\IM}{\mathbb{M}}
\newcommand{\IE}{\mathbb{E}}
\newcommand{\IA}{\mathbb{A}}
\newcommand{\IC}{\mathbb{C}}
\newcommand{\ID}{\mathbb{D}}

\newcommand{\IT}{\mathbb{T}}
\newcommand{\IZ}{\mathbb{Z}}
\newcommand{\IP}{\mathbb{P}}
\newcommand{\IQ}{\mathbb{Q}}

\newcommand{\cal}{\mathcal}

\newcommand{\cla}{{\cal A}}
\newcommand{\clq}{{\cal Q}}
\newcommand{\clm}{{\cal M}}
\newcommand{\clz}{{\cal Z}}
\newcommand{\cli}{{\cal I}}
\newcommand{\cls}{{\cal S}}

\newcommand{\clf}{{\cal F}}

\newcommand{\clh}{{\cal H}}
\newcommand{\clp}{{\cal P}}

\newcommand{\clb}{{\cal B}}

\newcommand{\clj}{{\cal J}}
\newcommand{\cln}{{\cal N}}
\newcommand{\cld}{{\cal D}}
\newcommand{\cll}{{\cal L}}
\newcommand{\clc}{{\cal C}}

\newcommand{\raro}{\rightarrow}

\newcommand{\vsp}{\vskip 1em}

\newcommand{\ul}{\underline}

\newcommand{\be}{\begin{equation}}
\newcommand{\ee}{\end{equation}}
\newcommand{\ben}{\begin{eqnarray*}}
\newcommand{\een}{\end{eqnarray*}}
\pagestyle{myheadings}

\begin{document}

\title{Translation invariant state and its mean entropy-I}

\author{ Anilesh Mohari }

\address{ The Institute of Mathematical Sciences, CIT Campus, Taramani, Chennai-600113 }

\email{anilesh@imsc.res.in}

\keywords{Uniformly hyperfinite factors, Kolmogorov's property, Mackey's imprimitivity system, CAR algebra, Quantum Spin Chain, Simple $C^*$ algebra, Tomita-Takesaki theory, norm one projection }

\subjclass{46L}

\thanks{ }

\begin{abstract}

Let $\IM =\otimes_{n \in \IZ}\!M^{(n)}(\IC)$ be the two sided infinite tensor product $C^*$-algebra of $d$ dimensional matrices $\!M^{(n)}(\IC)=\!M_d(\IC)$ over the field of complex numbers $\IC$ and $\omega$ be a translation invariant state of $\IM$. In this paper, we have proved that the mean entropy $s(\omega)$ and Connes-St\o rmer dynamical entropy $h_{CS}(\IM,\theta,\omega)$ of $\omega$ are equal. Furthermore, the mean entropy $s(\omega)$ is equal to the Kolmogorov-Sinai dynamical entropy $h_{KS}(\ID_{\omega},\theta,\omega)$ of $\omega$ when the state $\omega$ is restricted to a suitable translation invariant maximal abelian $C^*$ sub-algebra $\ID_{\omega}$ of $\IM$. Futhermore, a translation invariant factor state of $\IM$ is pure if and only if its mean entropy is zero. The last statement can be regarded as a non commutative extension of Rokhlin-Sinai positive entropy theorem for a non-pure factor state of $\IM$. 

\end{abstract}

\maketitle 

\section{ Introduction }

\vsp 
Any stationary Markov chain gives a translation invariant Markov state on a two sided classical spin chain. One celebrated result in ergodic theory states that two translation invariant Markov states with positive Kolmogorov-Sinai dynamical [CFS] entropies are isomorphic [Or1] if and only if their dynamical entropies are equal. The set of translation invariant Markov states is a closed subset of all translation invariant states of the two sided classical spin chain. Though the set of ergodic states is dense in the set of two sided translation invariant states, all ergodic states need not be Markov states. In other words, all symbolic dynamics are not associated with stationary Markov chains [Or2].

\vsp 
This classification program was primarily formulated [CFS] with a motivation to classify the symbolic dynamics [Si] associated with automorphic systems of classical Hamiltonian dynamics with Kolmogorov property. A complete classification of automorphic systems remains incomplete in the most general mathematical set up of classical dynamical systems. Nevertheless, these results have found profound use in classical ergodic theory in special situations of paramount importance and remain an active area of research over the last few decades due to its diverse applications in other areas of mathematics [CFS].    

\vsp 
In this paper, we have formulated a classification problem for  translation invariant states of the two sided quantum spin chain. Our prime motivation is to understand the properties of translation invariant factor and pure states that appear naturally as temperature and ground states respectively of a quantum mechanical Hamiltonian [BRII,Sim]. 
Though, in spirit our results are similar to the results in classical ergodic theory, our techniques and motivations are quite different. Before specializing in two sided quantum spin chain, we will recall briefly some standard terminologies and notations used in the theory of operator algebras [Sak, BRI,Ta2]. We will also recall the classical situation of our present problem in some details in the following text and compare with its quantum counter part. 

\vsp 
Let $\clb$ be a $C^*$-algebra over the field of complex numbers $\IC$. A linear functional $\omega:\clb \raro \IC$ is called a {\it state} on $\clb$ if it is {\it positive} i.e. $\omega(x^*x) \ge 0$ for all $x \in \clb$ and {\it unital} i.e. $\omega(I)=1$, where $I$ is the unit element of $\clb$. The convex set $\clb^*_{+,1}$ of states on $\clb$ is compact in weak$^*$ topology of the dual Banach space $\clb^*$ of $\clb$. A state $\omega$ of $\clb$ is called {\it pure} if the state can not be expressed as a convex combination of two different states i.e. if $\omega$ is an extremal element in $\clb^*_{+,1}$.  

\vsp 
Let $\clb_1,\clb_2$ be two unital $C^*$-algebras [BRI,Ta2] over the field of complex numbers $\IC$. A unital linear map $\pi: \clb_1 \raro \clb_2$ is called {\it homomorphism} if 
\be 
\pi(x)^*=\pi(x^*)\;\;\mbox{and}\;\; \pi(xy)=\pi(x)\pi(y)
\ee 
for all $x,y \in \clb_1$. An injective homomorphism $\beta:\clb_1 \raro \clb_2$ is called 
{\it endomorphism}. For an unital $C^*$ algebra $\clb$, a linear bijective map $\theta: \clb \raro \clb$ is called {\it automorphism} if the map is a homomorphism. 

\vsp 
For an unital $C^*$ algebra $\clb$, a state $\omega$ of $\clb$ is called {\it invariant} 
for an automorphism $\theta: \clb \raro \clb$ if $\omega = \omega \theta$. A triplet $(\clb,\theta,\omega)$ is called a unital $C^*$-{\it dynamical system} if $\clb$ is a unital $C^*$-algebra and $\theta:\clb \raro \clb$ is an automorphism preserving a state $\omega$ of $\clb$. For a given automorphism $\theta$ on a unital $C^*$-algebra $\clb$, the set of invariant states 
$$\cls^{\theta}=\{ \omega \in \clb^*_{+,1}: \omega = \omega  \theta \}$$ 
of $\theta$ is a non empty compact set in the weak$^*$ topology of the dual Banach space $\clb^*$. An extremal element in the convex set $\cls^{\theta}$ is called {\it ergodic } state for $\theta$. An invariant state $\omega$ of $\theta$ is ergodic if and only if 
\be 
{1 \over 2n+1} \sum_{-n \le k \le n} \omega(y\theta^k(x)z) \raro \omega(yz)\omega(x)
\ee
as $n \raro \infty$ for all $x,y,z \in \clb$. 

\vsp
Let $(\clh_{\omega},\pi_{\omega},\zeta_{\omega})$ be the Gelfand-Naimark-Segal (GNS) space associated with a state $\omega$ of $\clb$, where $\pi_{\omega}:\clb \raro \clb(\clh_{\omega})$ is a $*$-representation of $\clb$ and $\zeta_{\omega}$ is the cyclic vector for $\pi(\cla)$ in $\clh_{\omega}$ such that 
$$\omega(x)= \langle \zeta_{\omega},\pi_{\omega}(x)\zeta_{\omega} \rangle$$
Let $\pi_{\omega}(\clb)'$ be the {\it commutant} of $\pi_{\omega}(\clb)$ i.e. $\pi_{\omega}(\clb)'=\{x \in \clb(\clh_{\omega}): xy=yx,\;\forall y \in \pi_{\omega}(\clb) \}$ and $\pi_{\omega}(\clb)''$ be the {\it double commutant} of $\pi_{\omega}(\clb)$ i.e.
$\pi_{\omega}(\clb)''= \{ x \in \clb(\clh_{\omega}): xy=yx\;\;\forall y \in \pi_{\omega}(\clb)' \}$. By a celebrated theorem of von-Neumann, $\pi_{\omega}(\clb)''$ is the weak$^*$ completion of $\pi_{\omega}(\clb)$ in $\clb(\clh_{\omega})$ and it admits a pre-dual Banach space. The pre-dual Banach space is often called {\it normal functional} on $\pi_{\omega}(\clb)''$. A state $\omega$ of $\clb$ is pure if and only if $\pi_{\omega}(\clb)''=\clb(\clh_{\omega})$, the algebra of all bounded operators on $\clh_{\omega}$.

\vsp 
Given a unital $C^*$-dynamical system $(\clb,\theta,\omega)$, we have a unitary operator $S_{\omega}:\clh_{\omega} \raro \clh_{\omega}$ extending the following inner product preserving map
\be 
S_{\omega} \pi_{\omega}(x)\zeta_{\omega}=\pi_{\omega}(\theta(x))\zeta_{\omega},\;x \in \clb
\ee 
and an automorphism $\Theta_{\omega}:\pi_{\omega}(\clb)'' \raro \pi_{\omega}(\clb)''$, defined by 
\be 
\Theta_{\omega}(X)=S_{\omega}XS^*_{\omega},\;X \in \pi_{\omega}(\clb)''
\ee 
Thus we have  
$$\Theta_{\omega}(\pi_{\omega}(x)) = \pi_{\omega}(\theta(x)),\;x \in \clb$$
Furthermore, $\omega$ is ergodic for $\theta$ if and only if 
$$\{ f: S_{\omega}f=f,\; f \in \clh_{\omega} \} = \{z \zeta_{\omega}: z \in \IC \}$$ 

\vsp 
An invariant state $\omega$ for $\theta$ is called {\it strongly mixing} if
\be 
\omega(x\theta^n(y)) \raro \omega(x)\omega(y)
\ee
as $|n| \raro \infty $ for all $x,y \in \clb$. A strongly mixing state is obviously ergodic, however the converse is false. A simple application of Riemann-Lebesgue lemma says that absolute continuous spectrum of $S_{\omega}$ in the orthogonal complement of invariant vector in $\clh_{\omega}$ of $S_{\omega}$ is sufficient for strong mixing property [Pa]. It is not known yet, whether converse is true. In other words, no simple criteria on $\omega$ is known yet for strongly mixing. 

\vsp 
In case, $\clb$ is a unital commutative $C^*$-algebra, then $\clb$ is isomorphic to $C(X)$, where $C(X)$ is the algebra of complex valued continuous functions on a compact Hausdorff space $X$. An automorphism $\theta$ on $C(X)$ determines a unique one to one and onto continuous point map $\gamma_{\theta}:X \raro X$ such that  $\theta(f)=f \circ \gamma_{\theta}$, for all $f \in C(X)$. A state $\omega$ on $\clb \equiv C(X)$ is determined uniquely by a regular probability measure $\mu_{\omega}$ on $X$ by $\omega(f)=\int f d \mu_{\omega}$ and its associated GNS space $\clh_{\omega}=L^2(X,\mu_{\omega})$ with representation $\pi_{\omega}(h)f=hf$ for all $f \in C(X)$ with $\pi_{\omega}(X)''=L^{\infty}(X,\clf_X,\mu_{\omega})$, where $\clf_X$ is the Borel $\sigma$-field of $X$. Thus ergodic and strongly mixing properties introduced in the non commutative framework of $C^*$ algebras are in harmony with its classical counter parts and coincide once one restricts them to commutative $C^*$-algebras. 

\vsp 
Two unital $C^*$-dynamical systems $(\cla_1,\theta_1,\omega_1)$ and $(\cla_2,\theta_2,\omega_2)$ are said to be {\it isomorphic } if there exists a $C^*$ automorphism $\alpha:\cla_1 \raro \cla_2$ such that 
\be 
\theta_2  \alpha = \alpha   \theta_1,\;\;\omega_2  \alpha = \omega_1
\ee 
on $\cla_1$. It is clear that ergodic and strong mixing properties remain covariant with respect to the isomorphism. One of the central problem in classical ergodic theory is aimed to classify classical dynamical system of automorphisms upto the isomorphism. 

\vsp 
We say two $C^*$-dynamical systems $(\cla_1,\theta_1,\omega_1)$ and $(\cla_2,\theta_2,\omega_2)$ are {\it weak$^*$ isomorphic} if there exists unital completely positive maps $\tau:\cla_1 \raro \pi_{\omega_2}(\cla_2)''$ and 
$\eta:\cla_2 \raro \pi_{\omega_1}(\cla_1)''$ such that

\vsp 
\NI (a) $\tau$ and $\eta$ are limit points of inter-twinning automorphisms between the two dynmics i.e. $\tau( \eta )$ is a limit point of a sequence $\alpha_n:\cla_1 \raro \cla_2$ of automorphisms that satisfies $\alpha_n \theta_1 = \theta_2 \alpha_n,\;n \ge 1$ in bounded-weak topology of Arveson.    

\vsp 
\NI (b) $$\omega_2 \tau = \omega_1,\;\;\;\theta_2 \tau = \tau \theta_1,\;\;\mbox{on}\;\; \cla_1$$ 

\vsp 
\NI (c) $$\omega_1 \eta = \omega_2,\;\;\;\theta_1 \eta = \eta \theta_2,\;\;\mbox{on}\;\; \cla_2$$

Thus any two isomorphic dynamics are weak$^*$ isomorphic. Converse statement is false even for two dynamics in commutative $C^*$-algebras. Ornstein's weak$^*$ isomorphism between two Bernoiulli shifts with equal Kolmogorov-Sinai dynamical entropies 
is given by a Borel isomorphism which need not be an isomorphism between two $C^*$-algebras i.e. need not be a continuous map.   
However, definition indroduced here for weak$^*$ isomorphism is inspired by the following simple observation that Connes-St\o rmer dynamical entropy is a weak$^*$ invariance. Hypothesis (a) is crucial as otherwise any two dynamics satisfy (b) and (c) 
with $\tau=\omega_1$ and $\eta=\omega_2$.     

\vsp 
We may replace automorphisms $(\theta_k:k=1,2)$ by unital endomorphisms in these definitions to include more general $C^*$-dynamical systems of $*$-endomorphisms. 

\vsp
A state $\omega$ of $\clb$ is called {\it factor} if $\pi_{\omega}(\clb)''$ is a factor i.e. if the centre $\pi_{\omega}(\clb)'' \bigcap \pi_{\omega}(\clb)'$ is equal to $\{ z \pi_{\omega}(I):z \in \IC \}$. An automorphism of $\clb$ takes a factor state to another factor state. Apart from ergodic and strong mixing properties, factor property is also an invariant under the isomorphism of two dynamics. For a commutative $C^*$-algebra $\clb$, the centre of $\pi_{\omega}(\clb)''$ is itself and thus $\omega$ is a factor state if and only if $\clh_{\omega}$ is one dimensional. In other words, $\omega$ is a Dirac measure on a point of $X$, where $\clb \equiv C(X)$. Such a state plays no interest in classical dynamical system since invariance property for the automorphism ensures that $\gamma_{\theta}$ has a fixed point in $X$.   

\vsp 
We introduce now one more invariant in the general mathematical set up of $C^*$-dynamical systems. For a family of $C^*$ sub-algebras $(\clb_i:i \in \cli)$ of $\clb$, we use the notation $\vee_{i \in \cli} \clb_i $ for the $C^*$ algebra generated by the family $(\clb_i: i \in \cli)$. For a $C^*$ sub-algebra $\clb_0$ of $\clb$, we set $\clb'_0 = \{ x \in \clb: xy=yx,\; \forall y \in \clb_0 \}$ and $\clb_0''= \{ x \in \clb: xy=yx,\; \forall y \in \clb_0' \}$. A $C^*$-dynamical system $(\clb,\theta,\omega)$ is said to have {\it weak Kolmogorov property} if there exists a $C^*$ sub-algebra $\clb_0$ of $\clb$ such that the following hold:

\vsp 
\NI (a) $\clb_0 \vee \clb_0' = \clb,\;\clb_0''=\clb_0,\;\;\theta^{-1}(\clb_0) \subseteq \clb_0$;

\vsp 
\NI (b) $\vee_{n \in \IZ} \theta^n(\clb_0) = \clb$; 

\vsp
\NI (c) $\bigcap_{n \in \IZ} \theta^{-n}(\clb_0)=\{z I: z \in \IC \}.$ 

\vsp 
\NI (d) For each $n \in \IZ$, let $F^{\omega}_{n]}(\clb_0)$ be the projection $[\pi_{\omega}(\theta^n(\clb_0))\zeta_{\omega}]$ in $\clh_{\omega}$. Then  
$$F^{\omega}_{n]}(\clb_0)\;\;\downarrow \;\;|\zeta_{\omega} \rangle \langle \zeta_{\omega}|$$
in strong operator topology as $n \raro -\infty$. In short, such an element $\omega \in \cls^{\theta}(\clb)$ is called {\it weak Kolmogorov state}. When a fixed state $\omega \in \cls^{\theta}(\clb)$ and a $C^*$-subalgebra 
$\clb_0$ are under consideration, we often omit the superscript $\omega$ and $\clb_0$ in the notation of $F^{\omega}_{n]}(\clb_0)$ and simply denote by $F_{n]}$ for each $n \in \IZ$. 

\vsp 
The relations (a)-(c) are state independent. In particular, for any translation invariant state $\omega$ of $\clb$, properties (a) and (b) ensure that 
$$F_{n]} \uparrow I_{\clh_{\omega}}$$ as $n \uparrow \infty$. Furthermore, $F_{n]} \downarrow F_{-\infty]}$ as $n \downarrow -\infty$ in strong operator topology for some projection $F_{-\infty]} \ge |\zeta_{\omega} \rangle \langle \zeta_{\omega}|$. However, in general $F_{-\infty]}$ need not be equal to $|\zeta_{\omega} \rangle \langle \zeta_{\omega}|$ even when (c) is true. Thus the property (d) is crucial to determine weak Kolmogorov property of the state $\omega$. It is clear that weak Kolmogorov property is an invariant for the dynamics $(\clb,\theta,\omega)$. 

\vsp 
For a weak Kolmogorov state $\omega$ and any $x,y \in \clb_0$, we also have 
$$|\omega(x\theta^n(y)|$$
$$=|<x^*\zeta_{\omega}, F_{n]} \theta^n(y) \zeta_{\omega}>|$$
$$\le ||F_{n]}x^*\zeta_{\omega}||\;||\theta^n(y)\zeta_{\omega}||$$
$$\le ||F_{n]}x^*\zeta_{\omega} || \; ||y||$$ 
$$ \raro 0$$
as $n \raro -\infty$ once $\omega(x)=0$. Now we use the linear property of the map $\theta$ to prove strong mixing property (5) for all $x,y \in \clb_0$. Going along the same line of the proof, we also verify (5) for all $x,y \in \theta^{-m}(\clb_0),\; m \ge 1$. Since
$\cup_{m \ge 1} \theta^{-m}(\clb_0)$ is norm dense in $\clb$, a standard density argument proves (5) for all $x,y \in \clb$. In other words, weak Kolmogorov states are strongly mixing. By the argument used above, for each $x \in \clb$ we have
\be 
\mbox{sup}_{\{y:||y|| \le 1 \}} |\omega(x \theta^n(y))-\omega(x)\omega(y)| \raro 0
\ee
as $n \raro -\infty$.       

\vsp 
For each $n \in \IZ$, we also set projections $E_{n]} \ge F_{n]}$ defined by 
$E_{n]} = [\pi_{\omega}(\theta^n(\clb_0'))'\zeta_{\omega}]$ and $(\clb,\theta,\omega)$ is called {\it Kolmogorov} if (a), (b), (c) and the following stronger property 

\vsp 
\NI (e) $E_{n]} \downarrow |\zeta_{\omega}\rangle \langle \zeta_{\omega}|$ as $n \raro -\infty$ holds. 

for some choice for $\clb_0$. Thus Kolmogorov property as defined here is as well an invariance of the dynamics unlike our previous definition [Mo1] (see section 6 for details).

\vsp 
Since $E_{n]}$ is the support projection of the state $\omega$ in $\pi_{\omega}(\theta^n(\clb'_0))''$ and by (c) we have $\vee \theta^n(\clb_0') = \clb$, for a Kolmogorov state we have then $|\zeta_{\omega} \rangle \langle \zeta_{\omega}| \in \pi_{\omega}(\clb)''$. So in such a case $\omega$ is a pure state.

\vsp 
A $C^*$ dynamical system $(\clb,\theta,\omega)$ is said to have {\it backward weak Kolmogorov property } if $(\clb,\theta^{-1},\omega)$ admits weak Kolmogorov property. Clearly the backward weak Kolmogorov property is also an invariant. A theorem of Rokhlin-Sinai [Pa] says that the Kolmogorov property for a classical dynamical systems is Kolmogorov if and only if its Kolmogorov-Sinai dynamical entropy is strictly positive. Since Kolmogorov-Sinai dynamical entropies are equal for $(\clb,\theta,\omega)$ and $(\clb,\theta^{-1},\omega)$, Kolmogorov property in classical dynamics is a time-reversible invariant. However, in the non commutative framework, such a time-reversible property for a weak Kolmogorov state is not clear even though relations (a)-(c) hold for $(\clb,\theta^{-1},\omega)$ with $\theta$ and $\clb_0'$ replacing $\theta^{-1}$ and $\clb_0$ respectively. Similar notion we also set for {\it backward Kolmorov property } for the dynamics.  

\vsp 
Let $\cli$ be a a subspace of a $C^*$-algebra $\clb$. It is called {\it an ideal} or {\it a two sided ideal } of $\clb$ if 

\vsp 
\NI (a) $\cli$ is closed under conjugation i.e. $x^* \in \cli$ if $x \in \cli$;

\vsp
\NI (b) $xy,yx \in \cli$ for all $x \in \cli$ and $y \in \clb$. 

\vsp 
A $C^*$ algebra $\clb$ is called {\it simple} if $\clb$ has no proper ideal i.e. other then $\clb$ or $\{0\}$. 

\vsp 
For a simple $C^*$ algebra $\clb$, any non trivial $*$-homomorphism $\beta:\clb \raro \clb$ is injective since the null space $\cln=\{x \in \clb_1: \beta(x)=0 \}$ is a two sided ideal. Thus $x \raro ||\beta(x)||$ is a $C^*$ norm on $\clb$. Since $C^*$ norm is unique on a $C^*$ algebra with a given involution by Gelfand spectral theorem [BRI], we get 
$$||\beta(x)||=||x||$$ 
for all $x \in \clb$. In particular, a homomorphism $\beta:\clb \raro \clb$ is an automorphism for a simple $C^*$ algebra $\clb$ if the homomorphism $\beta$ is onto.     

\vsp 
In this paper, we will investigate this abstract notion of weak Kolmogorov property in more details in the context of one lattice dimension two sided quantum spin chains studied in a series of papers [Mo1],[Mo2] and [Mo3] a notion for Kolmogorov property. 

\vsp 
In the following, we describe $C^*$ algebraic set up valid for quantum spin chain 
[BRII,Ru] and find its relation to classical spin chain [Pa] in details. Let $\IM=\otimes_{n \in \IZ} \!M^{(n)}_d(\IC)$ be the $C^*$ -completion of the infinite tensor product of the algebra $\!M_d(\IC)$ of $d$ by $d$ matrices over the field of complex numbers. Let $Q$ be a matrix in $\!M_d(\IC)$. By $Q^{(n)}$ we denote the element $...\otimes I_d \otimes I_d \otimes I_d \otimes Q \otimes I_d \otimes I_d \otimes I_d \otimes ... $, where $Q$ appears in the $n$-th component in the tensor product and $I_d$ is the identity matrix of $\!M_d(\IC)$. Given a subset $\Lambda$ of $\IZ$, $\IM_{\Lambda}$ is defined to be the $C^*$-sub-algebra of $\IM$ generated by elements $Q^{(n)}$ with $Q \in \!M_d(\IC)$, $n \in \Lambda$. The $C^*$-algebra $\IM$ being the inductive limit of increasing matrix algebras, it is a simple $C^*$-algebra [ChE],[SS]. 

\vsp 
We also set $$\IM_{loc}= \bigcup_{\Lambda:|\Lambda| < \infty } \IM_{\Lambda},$$
where $|\Lambda|$ is the cardinality of $\Lambda$. An automorphism $\beta$ on $\IM$ is called {\it local } if $\beta(\IM_{loc}) \subseteq \IM_{loc}$. Right translation $\theta$ is a local automorphism of $\IM$ defined by $\theta(Q^{(n)})=Q^{(n+1)}$. We also will use simplified notations $\IM_{R}=\IM_{[1,\infty)}$ and $\IM_{L}=\IM_{(-\infty,0]}$. Thus the restriction of $\theta$ ( $\theta^{-1}$ ), $\theta_R$ (and $\theta_L$) is a unital $*$-endomorphisms on $\IM_R$ ($\IM_L$). The restriction of a state $\omega$ to $\IM_{\Lambda}$ is denoted by $\omega_{\Lambda}$. We also set $\omega_{R}=\omega_{[1,\infty)}$ and $\omega_{L}=\omega_{(-\infty,0]}$.

\vsp 
We say a state $\omega$ of $\IM$ is {\it translation invariant} if $\omega  \theta = \omega$ on $\IM$. In this paper, we are interested to deal with $C^*$ dynamical systems $(\IM,\theta,\omega)$ and its isomorphism problem. 
Most results have natural generalisation to higher lattice dimensions. We will get back to this point an the end of last section. 

\vsp 
We begin with a simple technical result proved at the end of section 2. 

\vsp 
\begin{pro} 
Let $\IM_0$ be a $C^*$ sub-algebra of $\IM$ such that 

\vsp 
\NI (a) $M_{0}'' = M_{0},\;$ $\IM_{0} \vee \IM'_{0} = \IM$ and $\theta^{-1}(\IM_{[0}) \subseteq \IM_{0}$;

\vsp 
\NI (b) $\vee_{n \in \IZ} \theta^n(\IM_{0}) = \IM$;

\vsp 
\NI (c) $\bigcap_{n \in \IZ} \theta^n(\IM_{0})=\IC$.

\vsp 
Then there exists an automorphism $\alpha$ on $\IM$ commuting with $\theta$ with $\alpha(\IM_L)= \IM_{0}$. 
\end{pro}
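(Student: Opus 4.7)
The strategy is to construct $\alpha$ by matching matrix-unit systems in $\IM$. Let $\{e^{(n)}_{ij}: 1 \le i,j \le d,\; n \in \IZ\}$ denote the standard matrix units generating $\IM$, so that $\IM_L$ is the $C^*$-algebra generated by $\{e^{(n)}_{ij}: n \le 0\}$. Any $\alpha$ commuting with $\theta$ and sending $\IM_L$ onto $\IM_0$ must satisfy $\alpha(e^{(n)}_{ij}) = \theta^n(\alpha(e^{(0)}_{ij}))$, with $\alpha(e^{(0)}_{ij})$ lying in the ``step algebra'' $R := \IM_0 \cap \theta^{-1}(\IM_0)' \subseteq \IM_0$. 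So constructing $\alpha$ reduces to producing a unital matrix-unit system for $R$ inside $\IM_0$, once one knows $R \cong \!M_d(\IC)$.

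The main technical step, and the principal obstacle, is to establish that $R \cong \!M_d(\IC)$ and that $\IM_0 = \theta^{-1}(\IM_0) \vee R$ with the two factors commuting. My plan is to exploit the unique tracial state $\tau$ on the UHF algebra $\IM$. By (a)--(c), $\{\theta^n(\IM_0)\}_{n \in \IZ}$ is an increasing filtration of $\IM$ with norm-dense union and trivial tail; passing to the tracial GNS representation, where $\IM_0'' = \IM_0$ promotes $\IM_0$ to a von Neumann subalgebra, I would compare the resulting Jones-index tower with the standard one $\theta^n(\IM_L) = \IM_{(-\infty,n]}$ --- both exhausting the same hyperfinite II$_1$ factor $\IM''$. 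This comparison forces each inclusion $\theta^{n-1}(\IM_0) \subseteq \theta^n(\IM_0)$ to have index $d^2$ with relative commutant a type I$_d$ factor; all three Kolmogorov hypotheses are used essentially here.

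Once the step algebra is identified, set $R_n := \theta^n(R) = \theta^n(\IM_0) \cap \theta^{n-1}(\IM_0)'$; the $R_n$ are pairwise commuting copies of $\!M_d(\IC)$ indexed by $n \in \IZ$, and an induction on the one-step decomposition $\theta^n(\IM_0) = \theta^{n-1}(\IM_0) \vee R_n$ combined with (b)--(c) gives $\IM_0 = \vee_{n \le 0} R_n$ and $\IM = \vee_{n \in \IZ} R_n$. Fix matrix units $\{f_{ij}\}$ for $R = R_0$ and define $\alpha$ on generators by $\alpha(e^{(n)}_{ij}) := \theta^n(f_{ij})$; pairwise commutation of the $R_n$ makes this a well-defined $*$-homomorphism on $\IM_{loc}$, and simplicity of $\IM$ (noted in the excerpt above the proposition) forces $\alpha$ to be isometric and to extend continuously to an endomorphism of $\IM$ whose image contains every $R_n$, hence is surjective.

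The remaining properties are then immediate: on generators $\alpha\theta(e^{(n)}_{ij}) = \theta^{n+1}(f_{ij}) = \theta\alpha(e^{(n)}_{ij})$, and $\alpha(\IM_L) = \vee_{n \le 0}\theta^n(R) = \vee_{n \le 0} R_n = \IM_0$. The entire substance of the argument is concentrated in the second paragraph: identifying the one-step relative commutant $R$ as a full $d \times d$ matrix algebra via a tracial/index comparison that engages hypotheses (a), (b), (c) simultaneously; the surrounding construction of $\alpha$ from matrix units is bookkeeping.
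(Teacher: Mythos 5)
Your skeleton agrees with the paper's: both proofs decompose $\IM$ through the step algebras $R_n=\theta^n(\IM_0)\cap\theta^{n-1}(\IM_0)'$ and then rebuild a shift-covariant isomorphism onto the standard filtration. But the two claims you yourself identify as ``the entire substance of the argument'' --- that $\IM_0=\theta^{-1}(\IM_0)\vee R$ (equivalently $\IM=\vee_n R_n$) and that $R\cong M_d(\IC)$ --- are exactly the points where you give no proof, only a plan. The paper proves the generation statement by a concrete conditional-expectation argument: for $x$ in the commutant of $\vee_n R_n$, the trace-preserving expectations $\IE_n$ onto $\theta^n(\IM_0)$ push $x$ into $\bigcap_{m\le n}\theta^m(\IM_0)$, which is scalar by (c), and then (b) gives $\|\IE_n(x)-x\|\to 0$, so $x$ is scalar; nothing of this kind appears in your proposal. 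More seriously, your identification of $R$ rests on the assertion that comparing the filtration $\{\theta^n(\IM_0)''\}$ with the standard one inside the hyperfinite II$_1$ factor ``forces each inclusion to have index $d^2$ with relative commutant a type I$_d$ factor.'' No invariant is named that would force this: there is no general principle that two shift-covariant filtrations of the same II$_1$ factor with dense union and trivial tail have the same per-step index, and even granting index $d^2$, the further inference to a type I$_d$ relative commutant is false in general (there exist irreducible subfactors of index $d^2$, e.g.\ index $4$ with trivial relative commutant). In effect you are assuming the conclusion of the proposition at its hardest point.

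The paper closes this gap with a different invariant: after showing $\IM\cong\otimes_{n\in\IZ}\tilde\IM^{(n)}$ with $\tilde\IM^{(n)}$ copies of $R$ and the shift intertwined, it invokes uniqueness of the normalized trace together with the Connes--St\o rmer theorem that the dynamical entropy of $(\IM,\theta,\omega_0)$ is an isomorphism invariant equal to the mean entropy, which pins the per-site entropy at $\ln d$ and hence identifies $\tilde\IM^{(0)}$ with $M_d(\IC)$. If you want to salvage your route, you must either reproduce an argument of this entropy type or supply an actual theorem computing the index and relative commutant of $\theta^{-1}(\IM_0)''\subseteq\IM_0''$ from hypotheses (a)--(c); at present that step is a wish, not a proof. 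A smaller but real point: the hypothesis $\IM_0''=\IM_0$ is a relative bicommutant condition inside the $C^*$-algebra $\IM$ (as defined in the introduction), not a statement that $\IM_0$ becomes a von Neumann algebra in the tracial GNS representation, so the phrase ``promotes $\IM_0$ to a von Neumann subalgebra'' misreads what (a) gives you; it is what makes the identity $\IM_0\cap(R_0)'=\theta^{-1}(\IM_0)$ work at the $C^*$ level in the paper's induction. Your final bookkeeping (matrix units, commutation of the $R_n$, simplicity of $\IM$ forcing isometry, surjectivity from dense range) is fine and matches the paper's use of nuclearity and uniqueness of the $C^*$-norm.
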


\vsp 
For a translation invariant state $\omega$ on $\IM$ and the GNS space $(\clh_{\omega},\pi_{\omega},\zeta_{\omega})$ of $(\IM,\omega)$, we set a sequence of increasing projections $F^{\omega}_{n]}(\IM_L),\;n \in \IZ$ defined by 
\be 
F^{\omega}_{n]}(\IM_L) = [\pi_{\omega}(\theta^n(\IM_L))''\zeta_{\omega}]
\ee 
and 
$$F^{\omega}_{-\infty]}(\IM_L) = \mbox{lim}_{n \raro -\infty}F^{\omega}_{n]}(\IM_L)$$ 

\vsp 
For a given $C^*$-dynamical system $(\IM,\IM_{0},\theta,\omega)$ with weak Kolmogorov property, Proposition 1.1 says that there exists an isomorphic $C^*$-dynamical system $(\IM,\IM_L,\theta^{-1},\omega')$ with weak Kolmogorov property, where $\omega' = \omega \alpha$ for an automorphism $\alpha$ on $\IM$ commuting with $\theta$ and 
$$F^{\omega'}_{-\infty]}(\IM_L) = |\zeta_{\omega'}\rangle \langle \zeta_{\omega'}|$$
Similar statement holds for Kolmogorov property. 

\vsp 
Simplest example of a weak Kolmogorov state is given by any infinite tensor product state ( in particular, the unique normalized trace $\omega_0 = \otimes_{n \in \IZ} tr^{(n)}_0$ ) 
$\omega = \otimes_{n \in \IZ} \omega^{(n)}$, where $\omega^{(n)}=\omega^{(n+1)}$ for all $n \in \IZ$. In such a case, we have   
\be 
F_{n]}\pi_{\omega}(x)F_{n]}=\omega(x)F_{n]}
\ee 
for all $x \in \IM_{[n+1,\infty)}$, where $F_{n]}=[\pi_{\omega}(\theta^n(\IM_L))\zeta_{\omega}]$. 
This clearly shows $\omega$ is weak Kolmogorov. In particular, any two such infinite tensor product states need not give isomorphic dynamics since the class of such states includes both pure and non pure factor states. Thus for isomorphism problem, we need to deal with the class of Kolmogorov states with some additional invariants. 

\vsp 
The argument that we have used to prove strongly mixing property of weak Kolmogorov state, as well gives a proof for factor property of Kolmogorov state $\omega$ by Theorem 2.5 in [Pow]. In this paper, our analysis aims to deal with isomorphism problem for the classes of translation invariant states with additional properties. At this point, we warn attentive readers that  different notions of Kolmogorov dynamics are also studied in [NS] and as well as in [Mo1,Mo2,Mo3].  
       
\vsp 
We also set a family of increasing projections $E_{n]}=[\pi_{\omega}(\theta^n(\IM_R))'\zeta_{\omega}],\; n \in \IZ$ i.e. $E_{n]}$ is the support projection of $\omega$ in $\pi_{\omega}(\theta^n(\IM_R))''$. Thus we have $E_{n]} \le E_{n+1]}$ and $E_{n]} \uparrow I_{\clh_{\omega}}$ as $n \uparrow \infty$. Let $E_{n]} \downarrow E_{-\infty]}$ as $n \downarrow -\infty$ for some projection $E_{-\infty]} \ge |\zeta_{\omega}\rangle\langle\zeta_{\omega}|$. It is clear that 
\be 
F_{n]} \le E_{n]}
\ee 
for all $n \in \IZ$. The inequality (10) is strict if $\omega$ is a non pure factor state [Mo2]. 

\vsp 
By a theorem in [Mo2], a translation invariant pure state $\omega$ of $\IM$ admits {\it Haag duality} property i.e. 
\be 
\pi_{\omega}(\IM_L)''= \pi_{\omega}(\IM_R)'
\ee
Though $\IM_R'=\IM_L$ as $C^*$ sub-algebras of $\IM$, it is a non trivial fact that the equality (11) holds for a translation invariant factor state $\omega$ if and only if the state $\omega$ is pure. In such a case, for each $n \in \IZ$, we have $F_{n]} = E_{n]}$. Thus a translation invariant pure state $\omega$ admits weak Kolmogorov property if and only if $\omega$ admits
Kolmogorov property. Thus our definition of weak Kolmogorov property for a translation invariant state is an extension of Kolmogorov property for a translation invariant pure state studied in [Mo1,Mo2,Mo3]. 

\vsp 
However, for a pure $\omega$, though $F_{n]}=E_{n]}$ for all $n \in \IZ$ [Mo2], the projection $F^{\omega}_{-\infty]}(\IM_L)$ may not be equal to $|\zeta_{\omega}\rangle\langle\zeta_{\omega}|$. We have included an example of a pure state $\omega$ that fails to have the equality $F^{\omega}_{-\infty]}(\IM_L) = |\zeta_{\omega}\rangle \langle\zeta_{\omega}|$ in the appendix of [Mo4].  

\vsp 
Theorem 2.6 in [Mo1] says that the state $\omega$ is pure Kolmogorov if $\omega_R$ ($\omega_L$) is a type-I factor state ( i.e. $\pi_{\omega}(\IM_R)''$ is isomorphic to the algebra of bounded operators on a Hilbert space ). At this stage, it is not clear whether the converse statement is also true for a pure Kolmogorov state.    

\vsp 
Given a translation invariant state $\omega$ of $\IM$, one has two important numbers: 

\vsp 
\NI (a) Mean entropy $s(\omega,\IM_{loc})=\mbox{limit}_{\Lambda_n \uparrow \IZ}{1 \over |\Lambda_n|}S_{\omega_{\Lambda_n}}$, where $S_{\omega_{\Lambda_n}}=-tr_{\Lambda_n}(\rho^{\omega}_{\Lambda_n}ln \rho^{\omega}_{\Lambda_n})$ is the von-Neumann entropy of the density matrix $\rho^{\omega}_{\Lambda_n}$ associated with the state $\omega_{\Lambda_n}(x)=tr_{\Lambda}(x\rho^{\omega}_{\Lambda_n})$ and $tr_{\Lambda_n}$ is the normalised trace on $\IM_{\Lambda_n}$ i.e. von Neumann entropy of the restricted state $\omega$ to the local $C^*$ subalgebra $\IM_{\Lambda_n}$, where $\Lambda_n=\{-n \le k \le n \}$ or more generally a sequence of finite subsets $\Lambda_n$ of $\IZ$ such that $\Lambda_n \uparrow \IZ$ in the sense of Van Hove [Section 6.2.4 in BR2]. A considerable literature devoted [OP,NS] in the last few decades to realise $s(\omega,\IM_{loc})$ as an invariant for the translation dynamics $(\IM,\theta,\omega)$ i.e. whether $s(\omega,\IM_{loc})$ can be realized intrinsically as a dynamical entropy of $(\IM,\theta,\omega)$ for a translation invariant state $\omega$ of $\IM$.    

\vsp 
\NI (b) Connes-St\o rmer dynamical entropy: $h_{CS}(\IM,\theta,\omega)$ [CS,CNT,OP,St\o2, NS] which is a close candidate for such an invariant for the translation dynamics $(\IM,\theta,\omega)$. It is known that $0 \le h_{CS}(\IM,\theta,\omega) \le s(\omega,\IM_{loc})$. In case $\omega$ is a product state then it is known that $h_{CS}(\IM,\theta,\omega)=s(\omega,\IM_{loc})$. Same holds for more general states proved in [Par], [Pe] and [GoB]. Results in particular show that there is no automorphism $\beta$ on $\IM$ such that $\beta \theta = \theta^2 \beta$ [CS]. It is also known that $h_{CS}(\IM,\theta,\omega)=0$ if $\omega$ is pure. However, no translation invariant state $\omega$ of $\IM$ is known in the literature for which $h_{CS}(\IM,\theta,\omega) < s(\omega,\IM_{loc})$. Main obstruction that comes from the fact an automorphism on $\IM$ need not preserve a given local sub-algebra $\IM_{loc}$ of $\IM$.  

\vsp 
In the section 6 of this paper, one of our main results gives a proof for the equality $h_{CS}(\IM,\theta,\omega)=s(\omega,\IM_{loc})$ for any translation invariant state $\omega$ of $\IM$ and thus we can write $s(\omega,\IM_{loc})=s(\omega)$, as it is now independent of the local algebras we choose. In particular, one easy consequence of this result says now that $s(\omega)=0$ for any translation invariant pure state of $\IM$. This raises a valid question: does $s(\omega)=0$ implies purity of $\omega$? We will show how this problem is related to classical problem of zero entropy of Kolmogorov-Sinai dynamical entropy and purity of the state of the classical spin chain. In particular, our analysis proves that the converse of the statement is true if the state is a factor state of $\IM$.

\vsp 
At this stage, we may raise few more non trivial questions: 

\vsp 
\NI (a) Does the mean entropy give a complete characterisation for the class of infinite tensor product states? In other words, does the equality in mean entropy of two infinite tensor product states give weak$^*$-isomorphic translation dynamics of $\IM$? 

\vsp 
If answer for (a) is yes, then we may raise a more general question on weak$^*$ isomorphism problem for translation invariant factor states of $\IM$: 

\vsp 
\NI (b) Does the mean entropy give a complete characterisation for the class of translation invariant factor states? In other words, given a factor state $\omega \in \cls_{\theta}(\IM)$, do we have a pair of completely positive map $\tau:\IM \raro \pi_{\omega}(\IM)''$ and $\eta:\IM \raro \pi_{\omega_{\rho}}(\IM)''$ commuting with $\theta$ such that the translation invariant state $\omega_{\rho} = \hat{\omega} \circ \eta $ is an infinite tensor product state of $\IM$?  

\vsp 
Affirmative answers to both (a) and (b) in particular says that the mean entropy is a complete invariant for the class of factor states. In this paper, we include a positive answer to question (a) in Theorem 7.2 provided we extend our notion of equivalence of dynamical system which proves that the mean entropy is a complete weak$^*$ invariant of translation dynamics for 
the class of infinite tensor product faithful states but need not be isomorphic in general as expected. We also address the problem (b) under some additional restrictions on the class of factor states on $\IM$. In particular, we prove that two translation invariant pure states give isomorphic dynamics. As a simple consequence of this result, we prove that the mean entropy of a translation invariant pure state is zero. However, the converse statement is false unless for a factor state.  

\vsp 
Thus it settles a long standing conjecture [NS] that positive mean entropy makes a state non pure. In other words, a non pure translation invariant factor state is having positive mean entropy.  

\section{Norm one projections} 

\vsp 
Let $\clm$ be a von-Neumann algebra acting on a Hilbert space $\clh$. A unit vector $\zeta$ is called cyclic for $\clm$ in $\clh$ if $[\clm \zeta]=\clh$. It is called separating for $\clm$ if $x\zeta=0$ for some $x \in \clm$ holds if and only if $x=0$.
Let $\clm'$ be the commutant of $\clm$, i.e. $\clm' = \{ x \in \clb(\clh): xy=yx \}$, where
$\clb(\clh)$ is the algebra of all bounded linear operators on $\clh$. An unit vector $\zeta$ is cyclic if and only if $\zeta$ is separating for $\clm'$.  

\vsp 
The closure of the closable operator $S_0:a\zeta \raro a^*\zeta,\;a \in \clm, S$ possesses a polar decomposition $S=\clj \Delta^{1/2}$, where $\clj$ is an anti-unitary and $\Delta$ is a non-negative self-adjoint operator on $\clh$. Tomita's [BRI] theorem says that 
\be 
\Delta^{it} \clm \Delta^{-it}=\clm,\;t \in \IR\;\mbox{and} \clj \clm \clj=\clm'
\ee 
We define the modular automorphism group
$\sigma=(\sigma_t,\;t \in \IT )$ on $\clm$
by
$$\sigma_t(a)=\Delta^{it}a\Delta^{-it}$$ which satisfies the modular relation
$$\omega(a\sigma_{-{i \over 2}}(b))=\omega(\sigma_{{i \over 2}}(b)a)$$
for any two analytic elements $a,b$ for the group of automorphisms $(\sigma_t)$. 
A more useful modular relation used frequently in this paper is given by 
\be 
\omega(\sigma_{-{i \over 2}}(a^*)^* \sigma_{-{i \over 2}}(b^*))=\omega(b^*a)
\ee 
which shows that $\clj a\zeta = \sigma_{-{i \over 2}}(a^*)\zeta$ for an analytic element $a$ for the automorphism group $(\sigma_t)$. The anti-unitary operator $\clj$ and the group of automorphism $\sigma=(\sigma_t,\;t \in \IR)$ are called {\it Tomita's conjugate operator} and {\it modular automorphisms } of the normal vector state $\omega_{\zeta}:x \raro \langle \zeta,x \zeta \rangle$ on $\clm$ respectively. 

\vsp 
A faithful normal state $\omega$ of $\clm$ is called stationary for a group $\alpha=(\alpha_t:\; t \in \IR)$ of automorphisms on $\clm$ if $\omega = \omega \alpha_t$ for all $t \in \IR$. A stationary state $\omega_{\beta}$ for $(\alpha_t)$ is called $\beta$-KMS state ($\beta > 0$) if there exists a function $z \raro f_{a,b}(z)$, analytic on the open strip $0 < Im(z) < \beta$, bounded continuous on the closed strip $0 \le Im(z) \le \beta$ with boundary condition 
\be 
f_{a,b}(t)=\omega_{\beta}(\alpha_t(a)b),\;\;f_{a,b}(t+i\beta)=\omega_{\beta}(\alpha_t(b)a)
\ee
for all $a,b \in \clm$. The faithful normal state $\omega_{\zeta}$ given by the cyclic and separating vector $\zeta$ is a $1 \over 2$-KMS state for the modular automorphisms group $\sigma=(\sigma_t)$. One celebrated theorem of M. Takesaki [Ta2] says that the converse statement is also true: If the normal state $\omega_{\zeta}$ given by cyclic and separating vector $\zeta$ is a $1 \over 2$-KMS state for a group $\alpha=(\alpha_t)$ of automorphisms on $\clm$ then $\alpha_t=\sigma_t$ for all $t \in \IR$. In particular, if $\theta$ is an automorphism on $\clm$ preserving $\omega$ then $\sigma_t \theta = \theta \sigma_t$ for all
$t \in \IR$. Furthermore, KMS relation also says that the von-Neumann sub-algebra $\{a \in \clm: \sigma_t(a) = a;\; t \in \IR \}$ is equal to $\{a \in \clm: \omega(ab)=\omega(ba),\;b \in \clm \}$      

\vsp 
Let $\zeta_{\omega_1}$ and $\zeta_{\omega_2}$ be cyclic and separating unit vectors for
standard von-Neumann algebras $\clm_1$ and $\clm_2$ acting on Hilbert spaces $\clh_{\omega_1}$ and $\clh_{\omega_2}$ respectively. Let $\tau:\clm_1 \raro \clm_2$ be a normal unital completely positive map such that $\omega_2 \tau = \omega_1$. Then there exists a unique unital completely positive normal map 
$\tau':\clm_2' \raro \clm_1'$ ([section 8 in [OP] ) satisfying the duality relation 
\be 
\langle b\zeta_{\omega_2},\tau(a)\zeta_{\omega_2} \rangle =  \langle \tau'
(b)\zeta_{\omega_1},a\zeta_{\omega_1} \rangle 
\ee
for all $a \in \clm_1$ and $b \in \clm_2'$. For a proof, we refer 
to section 8 in the monograph [OP] or section 2 in [Mo1]. We set the dual 
unital completely positive map $\tilde{\tau}:\clm_2 \raro \clm_1$ defined by 
\be 
\tilde{\tau}(b) =  \clj_{\omega_1}\tau'(\clj_{\omega_2}b\clj_{\omega_2})\clj_{\omega_1}
\ee 
for all $b \in \clm_2$. In particular, we have $\omega_2 = \omega_1 \tilde{\tau}$. 

\vsp 
If $\cln=\clm_2$ is a von-Neumann sub-algebra of $\clm=\clm_1$ with a faithful normal state $\omega(a) = \langle \zeta_{\omega},a \zeta_{\omega} \rangle$ and 
$i_{\cln}:\cln \raro \clm$ is the inclusion map of $\cln$ into $\clm$. Then the dual of 
$i_{\cln}$ with respect to $\omega$, denoted by $\IE_{\omega}:\clm \raro \cln$ is a {\it norm one projection } i.e. 
\be 
\IE_{\omega}(abc)=a\IE_{\omega}(b)c
\ee 
for all $a,c \in \cln$ and $b \in \clm$ if and only if $\sigma_t^{\omega}(\cln)=\cln$. 
For a proof we refer to [Ta1] and for a local version of this theorem [AC]. 

\vsp 
We start with an elementary lemma. 

\vsp 
\begin{lem} 
Let $\clb$ be a unital $C^*$-algebra and $\tau:\clb \raro \clb$ be a unital completely positive map preserving a faithful state $\omega$. Then the following holds:

\vsp 
\NI (a) $\clb_{\tau}=\{x \in \clb: \tau(x)=x \}$ and $\clf_{\tau}= \{ x \in \clb: \tau(x^*x)=\tau(x)^*\tau(x),\;\tau(xx^*)=\tau(x)\tau(x)^* \}$
are $C^*$ sub-algebras of $\IM$ and $\clb_{\tau} \subseteq \clf_{\tau}$.

\vsp 
\NI (b) Let $\IE$ be norm-one projection on a $C^*$-sub-algebra $\clb_0 \subseteq \clb$ i.e. A unital completely positive map 
$\IE:\clb \raro \clb_0$ with range equal to $\clb_0$ satisfying bi-module property 
$$\tau(yxz)=y\tau(x)z$$ 
for all $y,z \in \clb_0$ and $x \in \clb$ and $\omega$ be a faithful invariant state for $\IE$. Then for some $x \in \clb$, $\IE(x^*)\IE(x)=\IE(x^*x)$ if and only if $\IE(x)=x$; 

\vsp 
\NI (c) $\clb_{\IE}=\clf_{\IE}$.   
\end{lem}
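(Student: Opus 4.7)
The plan is to handle parts (a), (b), (c) in order, with (c) following immediately from (b).

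For part (a), I begin by observing that $\clb_\tau$ and $\clf_\tau$ are both norm-closed and $*$-closed (the latter because $\tau$ is $*$-preserving and each defining condition is symmetric under $x\leftrightarrow x^*$), and $\clb_\tau$ is clearly a linear subspace. The non-routine part is closure under multiplication, and the key tool is the Kadison--Schwarz inequality $\tau(x^*x)\ge\tau(x)^*\tau(x)$ valid for any unital completely positive $\tau$. I would first deduce $\clb_\tau\subseteq \clf_\tau$: for $x\in\clb_\tau$ the inequality reads $\tau(x^*x)-x^*x\ge 0$, a positive element whose expectation under the invariant state $\omega$ vanishes, hence it is zero by faithfulness; the symmetric argument handles $xx^*$. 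That $\clf_\tau$ is a $C^*$-subalgebra on which $\tau$ acts as a $*$-homomorphism is the classical multiplicative domain theorem of Choi, proved cleanly via the minimal Stinespring dilation. Combining the two, for $x,y\in\clb_\tau\subseteq\clf_\tau$ one has $\tau(xy)=\tau(x)\tau(y)=xy$, so $\clb_\tau$ is multiplicatively closed.

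For part (b), the ``if'' direction is automatic from the bimodule property: $\IE(x)=x$ places $x, x^*, x^*x$ all in $\clb_0$, and every element of $\clb_0$ is fixed by the norm-one projection $\IE$. For the ``only if'' direction, the decisive step is to test the error vector $y := x - \IE(x)$. Using the bimodule identity together with $\IE$ acting as the identity on $\clb_0$, direct expansion of $\IE(y^*y)$ telescopes:
\begin{equation*}
\IE(y^*y) = \IE(x^*x) - \IE(x^*)\IE(x) - \IE(x^*)\IE(x) + \IE(x^*)\IE(x) = \IE(x^*x) - \IE(x^*)\IE(x),
\end{equation*}
which is zero by hypothesis. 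Invariance of $\omega$ for $\IE$ then yields $\omega(y^*y) = \omega(\IE(y^*y)) = 0$, and faithfulness forces $y^*y = 0$, hence $y = 0$.

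Part (c) is a one-line consequence: the inclusion $\clb_\IE \subseteq \clf_\IE$ is a special case of part (a) applied to $\tau = \IE$, while the reverse is the ``only if'' half of (b) applied to each $x \in \clf_\IE$. The main obstacle is less conceptual than algebraic. The only input that is not routine bookkeeping is the multiplicative domain theorem invoked in (a); everything else is a disciplined application of Kadison--Schwarz, the bimodule identity, invariance, and faithfulness. In (b) the moment of insight is recognizing that the test element $y = x - \IE(x)$ repackages the Schwarz defect $\IE(x^*x) - \IE(x^*)\IE(x)$ as the $\IE$-image of a non-negative element $y^*y$, which is precisely what makes it couple to $\omega$ and collapse under faithfulness.
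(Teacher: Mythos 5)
Your proposal is correct, and parts (a) and (c) run essentially parallel to the paper: the paper also deduces $\clb_{\tau} \subseteq \clf_{\tau}$ from the Kadison--Schwarz inequality together with invariance and faithfulness of $\omega$, and obtains (c) by combining (a) with (b); the only cosmetic difference in (a) is that the paper closes $\clb_{\tau}$ under multiplication via polarization, $4x^*y=\sum_{0\le k\le 3} i^k(x+i^ky)^*(x+i^ky)$ together with $\tau(z^*z)=z^*z$ for $z\in\clb_{\tau}$, rather than by quoting the multiplicative domain theorem to get $\tau(xy)=\tau(x)\tau(y)=xy$ directly, and both texts rely on the Schwarz-equality (multiplicative domain) fact at that stage. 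Where you genuinely diverge is (b). The paper again leans on the equality case of Kadison--Schwarz: from $\IE(a^*a)=\IE(a^*)\IE(a)$ it extracts the one-sided identity $\IE(a^*)\IE(b)=\IE(a^*b)$ for all $b\in\clb$, then shows $\omega((\IE(a^*)-a^*)b)=0$ for every $b$ through a chain of invariance and bimodule identities, and finishes with faithfulness. You instead test $y=x-\IE(x)$ and compute, using only the bimodule property and the fact that $\IE$ is the identity on $\clb_0$, that $\IE(y^*y)=\IE(x^*x)-\IE(x^*)\IE(x)=0$, whence $\omega(y^*y)=0$ and $y=0$ by faithfulness. This is a cleaner and slightly more economical route: it avoids the multiplicative-domain machinery in (b) entirely (you still need it in (a), as does the paper) and isolates the exact mechanism, namely that the Schwarz defect of $\IE$ at $x$ is the $\IE$-image of the positive element $y^*y$, which is what couples to $\omega$. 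Both arguments use exactly the same hypotheses (bimodule property, $\omega\,\IE=\omega$, faithfulness), so nothing is lost in generality; your version of (b) is simply more self-contained.
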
 

\vsp 
\begin{proof} 
Though (a) is well known [OP], we include a proof in the following. The map $\tau$ being completely positive (2 positive is enough), we have Kadison inequality [Ka] $\tau(x^*)\tau(x) \le \tau(x^*x)$ for all $x \in \clb$ and equality holds 
for an element $x \in \clb$ if and only if $\tau(x^*)\tau(y)=\tau(x^*y)$ for all $y \in \clb$. This in particular shows that $\clf_{\tau}=\{x \in \clb:\tau(x^*x)=\tau(x^*)\tau(x),\;\tau(x)\tau(x^*)=\tau(xx^*) \}$ is $C^*$ -sub-algebra. We claim by the invariance property that $\omega \tau =\omega$ and faithfulness of $\omega$ and $\clb_{\tau} \subseteq \clf_{\tau}$. We choose $x \in \clb_{\tau}$ and by Kadison inequality we have 
$x^*x=\tau(x^*)\tau(x) \le \tau(x^*x)$ but $\omega(\tau(x^*x)-x^*x)=0$ by the invariance property and thus by faithfulness $x^*x=\tau(x^*)\tau(x)=\tau(x^*x)$. Since $x^* \in \clb_{\tau}$ once $x \in \clb_{\tau}$, we get $\tau(x)\tau(x^*)=\tau(xx^*)=xx^*$. This shows that $\clb_{\tau} \subseteq \clf_{\tau}$ and $\clb_{\tau}$ is an algebra by the polarization identity $4x^*y= \sum_{0 \le k \le 3}i^k(x+i^ky)^*(x+i^ky)$ for any two elements $x,y \in \clb$ as $\clb_{\tau}$ is a $*$-closed linear vector subspace of $\clb$. 

\vsp 
For (b) let $a \in \clb$ with $\IE(a^*a)=\IE(a^*)\IE(a)$. We have by the first part of the proof for (a), $\IE(a^*)\IE(b)=\IE(a^*b)$ for all $b \in \clb$ and thus
$$\omega(\IE(a^*)b))$$
$$=\omega(\IE(\IE(a^*)b))$$
(by the invariance property)
$$=\omega(\IE(a^*)\IE(b))$$
(by bi-module property)
$$=\omega(\IE(a^*b))$$
( by the first part of the argument used to prove (a) )
$$=\omega(a^*b)$$ 
for all $b \in \clb.$
So we have $\omega((\IE(a^*)-a^*)b)=0$ for all $b \in \clb$ and thus 
by faithful property of $\omega$ on $\clb$, we get $\IE(a^*)=a^*$ i.e. $\IE(a)=a$.

\vsp 
By (b), for $x \in \clb$ with $\IE(x^*)\IE(x)=\IE(x^*x)$, we have $\IE(x)=x$ and thus 
$\clf_{\IE} \subseteq \clb_{\IE}$. Now by (a), we complete the proof for (c). 
\end{proof}

\vsp 
Let $\omega$ be a faithful state on a $C^*$ sub-algebra $\IM$ and $(\clh_{\omega},\pi_{\omega}, \zeta_{\omega})$ be the GNS representation of $(\IM,\omega)$ so that 
$\omega(x)=\langle \zeta_{\omega}, \pi_{\omega}(x) \zeta_{\omega} \rangle$ for all $x \in \IM$. Thus the unit vector $\zeta_{\omega} \in \clh_{\omega}$ is cyclic and separating for von-Neumann algebra $\pi_{\omega}(\cla)''$ that is acting on the Hilbert space $\clh_{\omega}$. Let $\Delta_{\omega}$ and $\clj_{\omega}$ be the modular and conjugate operators on $\clh_{\omega}$ respectively of $\zeta_{\omega}$ as described in (12). The modular automorphisms group $\sigma^{\omega}=(\sigma^{\omega}_t:t \in \IR)$ 
of $\omega$ is defined by 
$$\sigma_t^{\omega}(a)=\Delta^{it}_{\omega}a\Delta^{-it}$$
for all $a \in \pi_{\omega}(\IM)''$. 

\vsp 
We are interested now to deal with faithful states $\omega$ on $\IM=\otimes_{n \in \IZ} \!M_d^{(n)}(\IC)$. For a given faithful state $\omega$, $\clm_{\omega,\Lambda}=\pi_{\omega}(\IM_{\Lambda})''$ is a von-Neumann sub-algebra of $\clm_{\omega}=\pi_{\omega}(\IM)''$. In general, $(\sigma^{\omega}_t)$ may not keep $\clm_{\omega,\Lambda}$ invariant. However, for an infinite tensor product state $\omega_{\rho} = \otimes_{n \in \IZ} \rho^{(n)}$ with $\rho^{(n)}=\rho$ for all $n \in \IZ$, $\sigma_t^{\omega_{\rho}}$ preserves $\clm_{\omega_{\rho},\Lambda}$. 

\vsp  
\begin{lem} 
Let $\IM=\otimes_{k \in \IZ} \!M^{(k)}_d$ and $\IM_{\Lambda}$ be the local $C^*$ algebra associated with a subset $\Lambda$ of $\IZ$. Then there exists a norm one projection $\IE_{\Lambda}:\IM \raro \IM_{\Lambda}$ 
preserving unique tracial state $\omega_0$ of $\IM$ satisfying the following properties:

\vsp 
\NI (a) $\IE_{\Lambda}$ commutes with the group of automorphisms $\{\beta_g:g \in \otimes_{k \in \IZ} U_d(\IC) \}$, where $g=\otimes_n g_n$ with all $g_n=I_d$ except finitely many $n \in \IZ$. 

\vsp 
\NI (b) For all $x \in \IM$, $||\IE_{\omega_0,\Lambda}(x)-x|| \raro 0$ as $\Lambda \uparrow \IZ$.  
\end{lem}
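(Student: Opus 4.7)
The strategy is to verify that the explicit formula $\IE_{\Lambda} = \otimes_{n \in \IZ} \IE^{(n)}_{\Lambda}$ suggested immediately before the lemma genuinely defines a norm one projection with all the required properties. First I would set, for each site $n$, the unital completely positive map $\IE^{(n)}_{\Lambda}: \!M^{(n)}_d(\IC) \raro \!M^{(n)}_d(\IC)$ to be the identity map when $n \in \Lambda$ and the one dimensional range map $x \mapsto tr_0(x)I_d$ when $n \notin \Lambda$. Each $\IE^{(n)}_{\Lambda}$ is a conditional expectation onto its own range, is bi-modular over that range, and preserves the normalised trace $tr_0$ on $\!M^{(n)}_d(\IC)$.

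Next I would assemble these site maps into finite tensor products. For each finite $F \sbs \IZ$ the map $\IE^F_{\Lambda} = \otimes_{n \in F} \IE^{(n)}_{\Lambda}$ is a unital completely positive projection of $\IM_F$ onto $\IM_{F \cap \Lambda}$, it is bi-modular over $\IM_{F \cap \Lambda}$, and it preserves the restriction of $tr_0$ to $\IM_F$. The family $\{\IE^F_{\Lambda}: F \sbs \IZ, |F| < \infty\}$ is compatible with the inclusions $\IM_F \sbs \IM_{F'}$ for $F \sbs F'$, because on the extra factors in $F'\setminus F$ the new sites are already being traced out or left unchanged in a consistent way. Hence the $\IE^F_{\Lambda}$ glue to a single unital completely positive map defined on the local algebra $\IM_{loc} = \bigcup_{|F| < \infty} \IM_F$, with range inside $\IM_{\Lambda} \cap \IM_{loc}$. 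Since any unital positive map between $C^*$-algebras is contractive, this map extends uniquely by norm continuity to $\IE_{\Lambda}: \IM \raro \IM$. The range sits in $\IM_{\Lambda}$ because $\IM_{\Lambda}$ is the norm closure of $\bigcup_F \IM_{F \cap \Lambda}$; idempotency, the bi-module property, and the identity $tr_0 \circ \IE_{\Lambda} = tr_0$ all pass from the finite level to the norm-completion.

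For the commutation with $\beta_g$, fix $g = \otimes_n g_n$ with $g_n = I_d$ for all $n$ outside some finite set $F_0$, so that $\beta_g$ factors as $\otimes_n \mbox{Ad}(g_n)$ site by site. Then on each site $n$ one has a pointwise identity: when $n \in \Lambda$, $\IE^{(n)}_{\Lambda}$ is the identity and commutes trivially with $\mbox{Ad}(g_n)$; when $n \notin \Lambda$, $\IE^{(n)}_{\Lambda}(g_n x g_n^*) = tr_0(g_n x g_n^*) I_d = tr_0(x) I_d = g_n \IE^{(n)}_{\Lambda}(x) g_n^*$ by the trace property of $tr_0$ and the equality $g_n I_d g_n^* = I_d$. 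Since pointwise commutation on each tensor factor implies commutation of the tensor products, we get $\IE^F_{\Lambda} \circ \beta_g = \beta_g \circ \IE^F_{\Lambda}$ on $\IM_F$ for every finite $F \supseteq F_0$, hence on $\IM_{loc}$, and by continuity on all of $\IM$.

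The only real obstacle is the careful bookkeeping needed to make the infinite tensor product of maps rigorous: one has to check that the maps $\IE^F_{\Lambda}$ really form a compatible inductive system, that they are uniformly contractive so the inductive limit extends to the $C^*$-completion, and that the range lies in $\IM_{\Lambda}$ rather than some larger algebra. Once these routine verifications are in place, trace preservation and commutation with the finite-support local unitary automorphisms reduce to the elementary one-site calculations above.
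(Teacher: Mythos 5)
Your proof is correct, but it takes a genuinely different route from the paper. You build $\IE_{\Lambda}$ explicitly as the inductive limit of the finite tensor products $\otimes_{n \in F}\IE^{(n)}_{\Lambda}$ (identity on sites in $\Lambda$, normalised trace out on sites off $\Lambda$), check compatibility, contractivity, idempotency, bi-modularity and trace preservation at the finite-dimensional level, and pass to the norm completion; the commutation with $\beta_g$ then reduces to the one-site identity $tr_0(g_n x g_n^*)I_d = g_n\,tr_0(x)I_d\,g_n^*$. The paper instead obtains $\IE_{\Lambda}$ abstractly from Takesaki's theorem (the modular group of the trace is trivial, hence preserves $\IM_{\Lambda}$), characterises it by the relation $tr(z\IE_{\Lambda}(x))=tr(zx)$ for all $z \in \IM_{\Lambda}$, and derives both the bi-module property and $\beta_g \IE_{\Lambda} = \IE_{\Lambda}\beta_g$ by short trace manipulations using $\beta_g(\IM_{\Lambda})=\IM_{\Lambda}$, invariance of the trace under automorphisms, and faithfulness of the trace on $\IM_{\Lambda}$. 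Your construction is more elementary and self-contained, and it rigorously justifies the explicit product formula which the paper only states in passing before the lemma; the paper's argument is shorter and, as a by-product of the trace characterisation, gives uniqueness of the trace-preserving norm one projection onto $\IM_{\Lambda}$, a fact used later in the paper (for instance in the proof of Theorem 4.1, where $\beta^{-1}\IE^e_{\omega_0}\beta = \IE^e_{\omega_0}$ follows from uniqueness). If you want your version to serve those later purposes, add the one-line supplement: any trace-preserving conditional expectation $\IE$ onto $\IM_{\Lambda}$ satisfies $tr(z\IE(x))=tr(\IE(zx))=tr(zx)$ for $z \in \IM_{\Lambda}$, so faithfulness of the trace on $\IM_{\Lambda}$ forces $\IE$ to coincide with your $\IE_{\Lambda}$.
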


\begin{proof} 
There exists a unique completely positive map $\IE_{\omega_0,\Lambda}:\pi_{\omega_0}(\IM)'' \raro \pi_{\omega_0}(\IM_{\Lambda})''$ satisfying 
\be 
\omega_0(\pi_{\omega_0}(z)\IE_{\omega_0,\Lambda}(\pi_{\omega_0}(x))=\omega_0(zx)
\ee
for all $x \in \IM$ and $z \in \IM_{\Lambda}$. It follows trivially by a theorem of M. Takesaki [Ta1] since modular group being trivial preserves $\pi_{\omega_0}(\IM_{\Lambda})$. For an indirect proof, we can use duality argument used in [AC] to describe $\IE_{\Lambda}$ as the $KMS$-dual map of the inclusion map $i_{\Lambda}:z \raro z$ of $\pi_{\omega_0}(\IM_{\Lambda})''$ into $\pi_{\omega_0}(\IM)''$. The modular group being trivial we get the simplified relation (17). 
The map $\IE_{\omega_0,\Lambda}$ restricted to $\pi_{\omega_0}(\IM_{\Lambda})''$ and $\pi_{\omega_0}(\IM_{\Lambda^c})''$ are 
the identity map on $\pi_{\omega_0}(\IM_{\Lambda})''$ and $\omega_0$ on $\pi_{\omega_0}(\IM_{\Lambda^c})''$ respectively. 
In particular $\IE_{\omega_0,\Lambda}(\pi_0(\IM_{loc})) \subseteq \pi_{\omega_0}(\IM_{loc})$ and there exists 
a map $\IE_{\Lambda}: \IM \raro \IM_{loc}$ so that 
$$\IE_{\omega_0,\Lambda}(\pi_{\omega_0}(x)) = \pi_{\omega_0}(\IE_{\Lambda}(x))$$
The representation $\pi_{\omega_0}$ be faithful, we may identify $x \in \IM$ with $\pi_{\omega_0}(x)$ and $\IE_{\omega_0,\Lambda}$
with $\IE_{\Lambda}$ in the following computation. 

\vsp 
That $\IE_{\Lambda}(z)=z$ for 
$z \in \IM_{\Lambda}$ is obvious by the faithful property of normalised trace. We may 
verify the bi-module property (17) directly: for all $y,z \in \IM_{\Lambda}$ and $x \in \IM$, 
$$\omega_0(z\IE_{\Lambda}(x)y)$$
$$=\omega_0(yz\IE_{\Lambda}(x))$$
$$=\omega_0(yzx)$$
$$=\omega_0(zxy)$$
$$=\omega_0(\IE_{\omega_0,\Lambda}(zx)y)$$
This shows that $\IE_{\Lambda}(zx)=z\IE_{\Lambda}(x)$ for all $z \in \IM_{\Lambda}$ and $x \in \IM$. By taking adjoint, we also get $\IE_{\Lambda}(xy)=\IE_{\Lambda}(x)y$ for all $y \in \IM_{\Lambda}$ and $x \in \IM$. Thus we arrive at the bi-module relation (17).  

\vsp 
Since $\beta_g(\IM_{\Lambda})=\IM_{\Lambda}$ for all $\Lambda$ for $g \in \otimes_{k \in \IZ} U_d(\IC)$, for all $z \in \IM_{\Lambda}$ and $x \in \IM$ we get 
$$tr(z \IE_{\Lambda} \beta_g(x))$$
$$=tr(z \beta_g(x))=tr(\beta_{g^{-1}}(z)x)$$
$$=tr(\beta_{g^{-1}}(z)\IE_{\Lambda}(x))$$
$$=tr(z \beta_g \IE_{\Lambda}(x))$$
So we get $\beta_g \IE_{\Lambda} = \IE_{\Lambda} \beta_g$ for all $g \in \otimes_{k \in \IZ}U_d(\IC)$. 

\vsp 
Since $\IE_{\Lambda}(\IM_{loc}) \subseteq \IM_{loc}$, for an element $x \in \IM_{\Lambda'}$, $\IE_{\Lambda}(x)=x$ 
once $\Lambda' \subseteq \Lambda$. Thus $||\IE_{\Lambda}(x)-x|| \raro 0$ as $\Lambda \uparrow \IZ$ for all 
$x \in \IM_{loc}$. Now we use $||\IE_{\Lambda}|| \le 1$ for all $\Lambda \subseteq \IZ$ and norm dense property of 
$\IM_{loc}$ in $\IM$ to complete the proof. 

\end{proof}

\vsp 
\begin{lem}
Let $A$ be $C^*$ subalgebra of $\IM$ with $A \vee A'=\IM$. Then there exists a norm one projection $\IE_A:\IM \raro A$ preserving the tracial state $\omega_0$ of $\IM$. Furthermore, 

\vsp 
\NI (a) $\pi_{\omega_0}(A)''$ is a factor;

\vsp 
\NI (b) $A''=A$. 
\end{lem} 

\vsp 
\begin{proof} 
Let $\IE_{\cla}$ be the unique norm one projection from $\clm=\pi_{\omega_0}(\IM)''$ onto $\cla=\pi_{\omega_0}(A)''$ preserving tracial state $\omega_0$ of $\IM$. We need to show $\IE_{\cla}(\pi_{\omega_0}(x))=\pi_0(\IE_A(x))$ for some element $\IE_A(x) \in A$ and the map $\IE_A:\IM \raro A$ is a norm one projection. Only non-trivial part of this statement is existance of an element $\IE_A(x) \in \IM$, which can be called `Feller property' of a Markov map. 

\vsp 
For any $x \in \IM$, we have $\IE_{\cla}(\pi_{\omega_0}(x))=\pi_{\omega_0}(x)$. Since $A \vee A'=\IM$ elements $x=\sum_i x_i y_i$ 
with $x_i \in A$ and $y_i \in A'$ are norm dense in $\IM$. 

\vsp 
In particular, $\pi_{\omega}(\IM)''= \pi_{\omega_0}(A)'' \vee \pi_{\omega_0}(A')''$ and so 
$\pi_{\omega_0}(\IM)'= \pi_{\omega_0}(A)' \bigcap \pi_{\omega_0}(A')'$. But $\pi_{\omega_0}(A)'' \subseteq \pi_{\omega_0}(A')'$ and so $\pi_{\omega_0}(A)'' \bigcap \pi_{\omega_0}(A)' \subseteq \pi_{\omega_0}(\IM)'' \bigcap \pi_{\omega_0}(\IM)'$. Since $\omega_0$ is factor state of $\IM$, we conclude that $\pi_{\omega_0}(A)''$ is also a factor. 

\vsp 
We check now $\IE_{\cla}(\pi_{\omega_0}(x))= \sum_i \pi_{\omega_0}(x_i) \IE_{\cla}(\pi_{\omega_0}(y_i))$. But for any element 
$y \in A'$, we have $\IE_{\cla}(\pi_{\omega}(y)) \in \pi_{\omega_0}(A)'' \bigcap \pi_{\omega_0}(A)'$. So by the factor property of $\pi_{\omega_0}(A)''$, we conclude that $\IE_{\cla}(y_i)=\omega_0(y_i)I$
Thus $\IE_{\cla}(\pi_{\omega}(x)) \in \pi_{\omega}(A)$ for all $x \in \IM$, where $x=\sum_i x_iy_i$ with $x_i \in A$ and $y_i \in A'$. Since $||\IE_{\cla}(\pi_{\omega_0}(x))|| \le ||\pi_{\omega}(x)||$ for all $x \in \IM$, we conclude $\IE_{\cla}(\pi_{\omega_0}(x)) \in \pi_{\omega_0}(A)$ for all $x \in \IM$. So there exists an element $\IE_{A}(x) \in A$ so 
that $\IE_{\cla}(\pi_{\omega_0}(x))=\pi_{\omega_0}(\IE_{A}(x))$ by the norm dense property. Now we use faithfulness of the reprentation $\pi_{\omega_0}$ of $\IM$, to find bi-module property of $\IE_{A}$ from that of $\IE_{\cla}$. 

\vsp 
Since $A \vee A'=\IM$, we have $A' \bigcap A'' = (A \vee A')'=\IC$. We also have $A \subseteq A''$ for any sub-algebra, so 
with $A$ replaced by $A'$, we get $A' \subseteq A'''$. On the other hand taking commutant on both side, we get $A''' \subseteq A'$. This shows $A'=A'''$. In particular, $A'' \vee A''' = A'' \vee A' = \IM$ since $A \vee A' =\IM$ and $A \subseteq A''$. 

\vsp 
By the first part of the present lemma, there exists a norm one projection $\IE_{A''}:\IM \raro A''$, onto $A''$ preserving $\omega_0$. Since $A \vee A' = \IM$, elements $a = \sum_i x_iy_i$ with $x_i \in A$ and $y_i \in A'$ are norm dense in $\IM$. If
$a \in A''$, we choose a sequence $a_n = \sum_i x^n_iy^n_i$ with $x^n_i \in A$ and $y^n_i \in A'$ so that $||a-a_n|| \raro 0$ and verify that $\IE_{A''}(a_n) = \sum_i x^n_i \IE_{A''}(y^n_i) = \sum_i x^n_i \omega_0(y^n_i) \in A$ since $A'' \bigcap A' = (A' \vee A)'=\IC$. However $||\IE_{A''}(a_n)-\IE_{A''}(a)|| \le ||a_n-a|| \raro 0$ and $\IE_{A''}(a)=a$ and thus $a \in A$.
\end{proof}

\vsp 
\begin{lem}
For any uniformly hyperfinite $C^*$-subalgebra $A$ of a $C^*$-algebra $\IM$, $A''=A$. 
\end{lem}

\vsp 
\begin{proof}
\vsp 
Let $\{A_{p_n},n \ge 1\}$ be a family of finite sub-factors of increasing dimension $p_n$ such that $\vee A_{p_n} = A$. Then 
$p_{n-1}|p_n$ and $A$ is $C^*$-isomorphic to $\otimes_{n \ge 1} B_{q_n}$, where $B_{q_n}$ is $C^*$-isomorphic to $A_{p_n} \bigcap A'_{p_{n-1}}$ of dimension $q_n={p_n \over p_{n_1}}$, where we have set $A_{p_0}=\IC$. So it is a straight forward to calculate $A'' \equiv \otimes_{n \ge 1} B_{q_n}'' = \otimes_{n \ge 1}B_{q_n} \equiv A$ 
as $B_{q_n}''=B_{q_n}$ for $n \ge 1$ being finite dimensional.   
\end{proof} 

\vsp 
We verify few simple relations. For any $C^*$-subalgebra $A$ of a $C^*$-algebra $\IM$, $A \subseteq A''$ and so by taking communtant on both side we get $A''' \subseteq A'$. However $A' \subseteq A'''$ by its defining commuting relation. So $A'=A'''$. Thus $A'' \bigcap A' = (A' \vee A)'= \IC$ if $A \vee A' = \IM$, if so then $A''$ is also a factor and so is $A$.

\vsp 
We end this section with a proof of Proposition 1.1. 

\vsp 
\begin{proof} 
\vsp 
Let $\IM_0$ be a $C^*$-algebra of $\IM$ such that 
$\IM_0''=\IM_0$ and $\theta^{-1}(\IM_0) \subseteq \IM_0$. Then the mutually commuting family of $C^*$-algebras 
$$\IM^{(n)} = \theta^n(\IM_0) \bigcap \theta^{n-1}(\IM_0)'$$ 
are isomorphic copies of $\IM^0$ with $\IM^{(n)}= \theta^n(\IM^{(0)})$. We claim that 
$$\vee_{n \in \IZ} \IM^{(n)} = \IM$$ 
For a proof, let $a$ be an element in the commutant of $\vee_{n \in \IZ} \IM^{(n)}$. Let $\IE_0$ be the conditional
expectation from $\IM$ onto $\IM_0$ with respect to the normalized trace $\omega_0$ on $\IM$. Then $\IE_0(a) \in \IM_0$ 
and for any $y \in \IM^{(0)}$ we have 
$$\IE_0(a)y$$
$$=\IE_0(ay)$$
$$=\IE_0(ya)$$
$$=y\IE_0(a)$$ 
So $\IE_0(a) \in \IM_0 \bigcap \IM^{(0)})'$. 

\vsp 
We claim that 
$$\theta^{-1}(\IM_0) = \IM_0 \bigcap (\IM^{(0)})'$$ 
 
\vsp 
By our definition of $\IM^{(0)}$, we have 
$\theta^{-1}(\IM_0) \subseteq \IM'_0 \vee (\theta^{-1}(\IM_0))'' \subseteq (\IM^{(0)})'$. 
Since $\theta^{-1}(\IM_0) \subseteq \IM_0$, we also have 
$$\theta^{-1}(\IM_0) \subseteq \IM_0 \bigcap (\IM^{(0)})' \subseteq \IM_0$$
where the last inclusion is obvious. 

\vsp 
We claim first that $A \vee A' = \IM$ if $A=\theta^{-1}(\IM_0) \vee \IM'_0$. For a proof we fix any $x \in \IM_0$ and use $\theta^{-1}(\IM_0) \vee \theta^{-1}(\IM'_0)=\IM$ to find elements $y^k_n \in \theta^{-1}(\IM_0)$ and $z^k_n \in \theta^{-1}(\IM'_0)$ so that $x$ is the norm limit of the sequence $\sum_k y^k_n z^k_n$. Then $x$ is also the norm limit of the sequence $\sum_k y^k_n \IE_{\IM_0}(z^k_n)$, where $\IE_{\IM_0}(z^k_n) \in \IM_0 \bigcap \theta^{-1}(\IM'_0)$ since $\theta^{-1}(\IM_0) \subseteq \IM_0$. This show that 
$$\IM_0 = (\IM_0 \bigcap \theta^{-1}(\IM_0) ) \vee (\IM_0 \bigcap \theta^{-1}(\IM_0)').$$ 
Now we use $\IM_0 \vee \IM'_0 = \IM$ to conclude $A \vee A' = \IM$.

\vsp 
So by Lemma 2.3 (b), we have $A''=A$, i.e. $(\IM^{(0)})'=\theta^{-1}(\IM_0) \vee \IM'_0$. 

\vsp 
Elements in $\IM'_0$ commutes with $\theta^{-1}(\IM_0)$ as it is a subset of $\IM_0$ and so any element $a$ in $\IM^{(0)'}$ is of the norm limit of elements of the form $a_n=\sum_i x^n_iy^n_i$, where $x^n_i \in \theta^{-1}(\IM_0)$ and $y^n_i \in \IM'_0$. 
If $a$ is also in $\IM_0$, we have $\IE_{0}(a)=a$ and 
$$\IE_{0}(a_n)=\IE_{0}(\sum_i x^n_iy^n_i)= \sum_i x_i^n \IE_{0}(y^n_i),$$ 
where we recall $\IE_{0}$ is conditional expectation from $\IM$ onto $\IM_0$ preserving normalised trace $\omega_0$. 
Since $\IE_{0}(y)$ is an element in $\IM_0 \bigcap \IM'_0$ for any element $y \in \IM'_0$, we get each 
$\IE_{0}(a_n) \in \theta^{-1}(\IM_0)$ since $\IM_0 \bigcap \IM_0'=\IC$. As $||\IE_0(a-a_n)|| \le ||a-a_n|| \raro 0$ by our choice $a_n$ for $a$, we conclude that $a \in \theta^{-1}(\IM_0)$. 

\vsp 
Now we may repeat the argument with elements $y \in \IM^{(-1)}$ to show that 
$\IE_0(a) \in \theta^{-2}(\IM_0)$. Thus by mathematical induction on $n$ we conclude that 
$\IE_0(a) \in \bigcap_{n \le 0} \theta^n(\pi_{\omega_0}(\IM_0)$. By our assumption (c), we get 
$\IE_0(a)$ is a scalar. 

\vsp 
Let $\IE_n$ be the conditional expectation from $\IM$ onto $\theta^n(\IM_0)$ with respect to the unique normalized trace $\omega_0$. Then by the same argument use above, we get $\IE_n(a)$ is a scalar. However, by (b), we have for any $y \in \IM$
$$\IE_n(\pi_{\omega_0}(y)) - \pi_{\omega_0}(y) \raro 0$$ 
as $n \raro \infty$ in weak operator topology. We conclude that $\pi_0(a)$ is a scalar multiple of identity operator 
and so $a=\omega_0(a)I$. This shows that $(\vee_{n \in \IZ} \IM^{(n)})' = \IC$ once relations (a)-(c) in the statement of Proposition 1.1 hold. We use a temporary notation $\IP = \vee_{n \in \IZ} \IM^{(n)}$ and so $\IP \subseteq \IM$ and $\IP'=\IC$.

\vsp 
Since $\IP \bigcap \IP' = \IC$ and any element $x \in \IM^{(0)} \bigcap (\IM^{(0)})'$ commutes with all elements in $\IM^{(n)}$ for each $n \in \IZ$, we have $\IM^{(0)} \bigcap (\IM^{(0)})'= \IC$. We claim further that the linear map which extends the following map 
\be 
x=\Pi_{n \in \IZ} x_n \raro \tilde{x} =  \otimes_{n \in \IZ} \tilde{x}_n
\ee 
is a $C^*$ isomorphism between $\IP$ and $\tilde{\IM} = \otimes_{n \in \IZ } \tilde{\IM}^{(n)}$,  
where $x_n \in \IM^{(n)}$, taking values $I$ except for finitely many $n \in \IZ$ 
and $\tilde{\IM}^{(n)}$ are copies of $\IM^{(0)}$ with elements $\tilde{x}_n=\theta^{-n}(x_n) \in \IM^{(0)}$ for all $n \in \IZ$. The universal property of tensor products implies: as a vector space $\IP$ is isomorphic to $\otimes_{n \in \IZ} \tilde{\IM}^{(n)}$. That the map is also $C^*$ isomorphic, follows once we verify 
\be 
||x|| = ||\tilde{x}||
\ee 
for $x=\Pi_{n \in \IZ} x_n$. 
Note that $\IM$ being a nuclear $C^*$-algebra [ChE], so there is a unique $C^*$ norm determined by its cross norm [Pau]. For commuting self-adjoint elements $(x_n)$, spectrum $\sigma(x)$ of $x=\Pi_{n \in \IZ}x_n$ is given by $\sigma(x)= \{ \Pi_{n \in \IZ} \lambda_n: \lambda_n \in \sigma(x_n) \}$, where $\sigma(x_n)$ is the spectrum of $x_n$. Thus 
$$||x||=\mbox{sup}_{\lambda_n \in \sigma(x_n)} \Pi_{n \in \IZ} |\lambda_n| $$
$$= \Pi_{n \in \IZ_n} \mbox{sup}_{\lambda_n \in \sigma(x_n) }|\lambda_n| $$
$$= \Pi_{n \in \IZ} ||x_n||$$

\vsp 
So for the equality of norms in (21), we can use Gelfand theorem on spectral radius for 
a self-adjoint element $x=\Pi_{ n \in \IZ}x_n $ and then use $C^*$ property of the norms to verify the equality (21) for all $x=\Pi_{n \in \IZ} x_n$. Thus the $C^*$ algebra $\IP$ is isomorphic to $\tilde{\IM}$.

\vsp 
This shows that there exists an automorphism $\alpha:\IP \raro \tilde{\IM}$ that commutes with $\theta$ taking $\IM^{(n)}$ to $\tilde{\IM}^{(n)}$ for each $n \in \IZ$ and $\tilde{\IM}$ i.e. in particular, $(\IP,\theta,\omega_0)$ is isomorphic to $(\tilde{\IM},\theta,\omega_0)$, where we used the same notations $\omega_0$ for the unique normalized traces on $\IM$ and $\tilde{\IM}$ respectively. Since Connes-St\o rmer dynamical entropy is an invariant for $C^*$ dynamical system and is equal to mean entropy for the unique normalised trace [CS], we get $\tilde{\IM}^{(n)} \equiv \!M^{(n)}_{d'}(\IC)$ for some $d' \le d$. Since $\IP'=\IC$, we get equality $d'=d$. 
Thus we can identify $\tilde{\IM}$ with $\IM$. 

\vsp 
This shows the isomorphism (20) is induced by an automorphism $\alpha:\IM \raro \IM$ such that
$\alpha(\IM_0)=\IM_L$, where $\alpha$ commutes with $\theta$ by our construction. 
\end{proof} 

\vsp 
\begin{rem} 
Proposition 1.1 says, for a nuclear $C^*$ algebra $\clb$ with a normalised trace $\omega_0$, a $C^*$-dynamical system $(\clb,\theta,\omega)$ satisfying (a)-(c) with $\clb_0 \subset \clb$ is isomorphic to $(\tilde{\clb},\tilde{\theta},\tilde{\omega})$, where $\tilde{\theta}$ is 
the translation dynamics on a two-sided infinite tensor product $C^*$-algebra $\tilde{\clb} = \otimes_{n \in \IZ} \tilde{\clb}^{(n)}$ 
with an invariant state $\tilde{\omega}$ and $\tilde{\clb}^{(n)},n \in \IZ$ are copies of a $C^*$ sub-algebra of $\clb$. In particular, $\tilde{\omega}_0$ is also a trace on $\tilde{\clb}$.    
\end{rem}

\section{Maximal abelian $C^*$ sub-algebras and automorphisms of $\IM$ }

\vsp 
We recall briefly notations used in the following text. Let $\Omega=\{1,2,..,d\}$ 
and $\Omega^{\IZ}=\times_{n \in \IZ} \Omega^{(n)}$, where $\Omega^{(n)}$ are copies of $\Omega$ and equip with product topology. Thus $C(\Omega^{\IZ})$ can be identified with the $C^*$-sub-algebra $\ID^e= \otimes_{n \in \IZ} D_e^{(n)}(\IC)$, where $D_e^{(n)}(\IC)=D_d(\IC)$ for all $n \in \IZ$ and $D_d(\IC)$ is the set of diagonal matrices with respect to an orthonormal basis $e=(e_i:1 \le i \le d)$ for $\IC^d$. In other words, 
we identify $d \times d$ diagonal matrices $D_d(\IC)$ with the algebra $C(\Omega)$ of complex valued continuous functions on $\Omega$. 

\vsp 
A commutative $*$-subalgebra $D$ of $\!M_d(\IC)$ is called maximal abelian if $D'=D$. In such a case $D=D_d(\IC)$ for some orthonormal basis $e$ for $\IC^d$. More generally, an abelian $C^*$ sub-algebra $D$ of a $C^*$ algebra $\clb$ is called maximal abelian if $D'=D$. As a first step, we investigate some maximal abelian $C^*$-subalgebras of $\IM=\otimes_{n \in \IZ}\!M^{(n)}_d(\IC)$ those are $\theta$ invariant i.e. $\theta(D)=D$. In the following we use norm one projections $\IE_{\Lambda}: \IM \raro \IM_{\Lambda}$ with respect to unique tracial state. We recall that $\IE_{\Lambda}$ commutes with $\{\beta_g:g \in \otimes U_d(\IC) \}$ by Lemma 2.2. 
 
\vsp 
\begin{lem} 
For a subset $\Lambda$ of $\IZ$ we have 

\NI (a) $\IM_{\Lambda}'=\IM_{\Lambda'}$, where $\Lambda'$ is the complementary set of $\Lambda$ in $\IZ$;

\NI (b) $\IM_{\Lambda} \bigcap \ID^e = \ID^e_{\Lambda}$;

\NI (c) $\ID^e$ is a maximal abelian $*$-sub-algebra of $\IM$ and $\ID^e \equiv C(\Omega^{\IZ})$, the algebra of continuous functions on $\Omega^{\IZ}$;     

\NI (d) We set $C^*$-sub-algebra $\IM^e_{\Lambda} = \IM_{\Lambda} \vee \ID^e$ of $\IM$. Then the following hold:  

\NI (i) $(\IM^e_{\Lambda})'=\ID^e_{\Lambda'}$;

\NI (ii) $(\ID^e_{\Lambda'})' = \IM^e_{\Lambda}$. 

\end{lem}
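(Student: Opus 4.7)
My plan is to prove (a)--(c) first and then deduce (d) essentially by commutant algebra. Throughout, I use the trace-preserving norm one projections $\IE_{\Lambda}:\IM\raro\IM_{\Lambda}$ from Lemma 2.2, together with the gauge automorphisms $\beta_g$ coming from $\otimes_{k\in\IZ}U_d(\IC)$.

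For (a), the inclusion $\IM_{\Lambda'}\seq\IM_{\Lambda}'$ is immediate from commutativity of tensor factors at disjoint sites. For the converse, take $x\in\IM_{\Lambda}'$ and a finite set $G\sbs\IZ$. Since $\IE_G$ is an $\IM_G$-bimodule map and $\IM_{\Lambda\cap G}\seq\IM_G$, the identity $\IE_G(ax)=a\IE_G(x)=\IE_G(xa)=\IE_G(x)a$ shows that $\IE_G(x)$ commutes with $\IM_{\Lambda\cap G}$ inside $\IM_G$. The finite-dimensional commutant theorem yields $\IE_G(x)\in\IM_{G\setminus\Lambda}\seq\IM_{\Lambda'}$. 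Since $\IE_G(x)\raro x$ in norm as $G\uparrow\IZ$ (because $\IE_G$ acts as the identity on the dense local subalgebra $\IM_{\mathrm{loc}}$ and $\|\IE_G\|=1$), we obtain $x\in\IM_{\Lambda'}$. For (b), the nontrivial inclusion follows by approximating $x\in\IM_{\Lambda}\cap\ID^e$ by elementary diagonal tensors $x_F=\otimes_{n\in F}d_n\in\ID^e_F$, computing $\IE_{\Lambda}(x_F)=\bigl(\otimes_{n\in F\cap\Lambda}d_n\bigr)\,\prod_{n\in F\setminus\Lambda}tr(d_n)\in\ID^e_{F\cap\Lambda}\seq\ID^e_{\Lambda}$, and passing to the limit using $\IE_{\Lambda}(x)=x$.

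The main step is (c). I would show both that $\ID^e$ is abelian (obvious) and that any $x\in\IM$ commuting with $\ID^e$ lies in $\ID^e$. For each $n\in\IZ$, let $T_n\sbs U_d(\IC)$ be the torus of diagonal unitaries at site $n$; then $x\in(\ID^e)'$ is fixed by the inner $\beta_g$-action of each $T_n$. Averaging over $T_F=\prod_{n\in F}T_n$ for a finite $F\sbs\IZ$ realizes the trace-preserving conditional expectation onto the $T_F$-fixed-point algebra, and a direct matrix-unit calculation at each site $n\in F$ (using that the only $T_1$-fixed elements of $\!M_d(\IC)$ are diagonal) identifies this fixed-point algebra with $\ID^e_F\ot\IM_{F'}$. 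Combining with $\IE_F$ applied to $x$, one sees $\IE_F(x)\in\ID^e_F$, and $\IE_F(x)\raro x$ then gives $x\in\overline{\cup_F\ID^e_F}=\ID^e$. The hard part here is making the averaging argument rigorous uniformly in $F$ and recognizing that Lemma 2.2's compatibility $\beta_g\IE_F=\IE_F\beta_g$ is exactly what allows the local averages to assemble into a statement about the infinite tensor product.

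Once (a)--(c) are in hand, part (d)(i) is algebraic:
\be
(\IM^e_{\Lambda})'=(\IM_{\Lambda}\vee\ID^e)'=\IM_{\Lambda}'\cap(\ID^e)'=\IM_{\Lambda'}\cap\ID^e=\ID^e_{\Lambda'},
\ee
where the three equalities use $(A\vee B)'=A'\cap B'$ for commutant-closed subalgebras, then (a) and (c), then (b). For (d)(ii), the inclusion $\IM^e_{\Lambda}\seq(\ID^e_{\Lambda'})'$ is obvious since $\IM_{\Lambda}$ commutes with $\IM_{\Lambda'}\supseteq\ID^e_{\Lambda'}$ and $\ID^e$ is abelian. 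For the reverse, I replay the averaging argument of (c): if $x\in(\ID^e_{\Lambda'})'$, it is fixed by the $T_F$-action for every finite $F\sbs\Lambda'$, so for any $\ve>0$, approximating $x$ by $y\in\IM_G$ and averaging over $T_{G\cap\Lambda'}$ produces an element of $\ID^e_{G\cap\Lambda'}\ot\IM_{G\cap\Lambda}\seq\IM^e_{\Lambda}$ within $\ve$ of $x$, so $x\in\IM^e_{\Lambda}$ by closedness. The main obstruction throughout is passing from the finite-tensor-product fixed-point identification to the infinite setting; all other steps are formal once the averaging-plus-conditional-expectation machinery is set up.
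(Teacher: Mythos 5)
Your proof is correct and follows essentially the same route as the paper: compress with the trace-preserving conditional expectations of Lemma 2.2, identify the compressed element inside the finite-dimensional relative commutant (where the diagonal algebra is maximal abelian in a matrix box), and pass to the norm limit via density of $\IM_{loc}$, with part (d) being the same commutant bookkeeping $(\IM_{\Lambda}\vee\ID^e)'=\IM_{\Lambda}'\cap(\ID^e)'$ plus (a)--(b). The only cosmetic difference is that where the paper conjugates $\IE_{\Lambda}(x)$ by unitaries of $\ID^e_{\Lambda}$ (resp.\ $\IM_{\Lambda}$) and invokes maximal abelianness directly, you average over the torus of diagonal unitaries and identify the fixed-point algebra — two renderings of the same local computation.
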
 

\vsp 
\begin{proof} 
That $\IM_{\Lambda'} \subseteq \IM_{\Lambda}'$ is obvious. For the reverse inclusion, we fix any element $x \in \IM_{\Lambda}'$ and consider the elements $\IE_{\Lambda_1}(x)$ for 
$\Lambda \subseteq \Lambda_1$ so that $\Lambda_1 \bigcap \Lambda'$ is a finite subset of $\IZ$. 
For any unitary element $u \in \IM_{\Lambda}$ we have
$$u\IE_{\Lambda_1}(x)u^*$$
$$= \IE_{\Lambda_1}(uxu^*)$$
$$=\IE_{\Lambda_1}(x)$$
Thus $\IE_{\Lambda_1}(x) \in \IM_{\Lambda_1} \bigcap \IM_{\Lambda}'=\IM_{\Lambda_1 \bigcap \Lambda'}$. Since $||x-\IE_{\Lambda_1}(x)|| \raro 0$ as $\Lambda_1 \uparrow \IZ$ in the sense of van-Hove [BR2], we conclude that $x \in \IM_{\Lambda'}$. 

\vsp 
For (b), we can repeat ideas of the proof of (a) with obvious modification. To prove the non trivial inclusion, we fix any element $x \in \IM_{\Lambda} \bigcap \ID^e$ and consider 
$\IE_{\Lambda_1}(x)$ for $\Lambda \subseteq \Lambda_1$ such that $\Lambda_1 \bigcap \Lambda'$ is a finite subset of $\IZ$. For any unitary element $u \in \IM_{\Lambda_1 \bigcap \Lambda'} \vee \ID^e_{\Lambda}$ 
we have 
$$u\IE_{\Lambda_1}(x)u^*$$
$$=\IE_{\Lambda_1}(uxu^*)$$
$$=\IE_{\Lambda_1}(x)$$
Thus $\IE_{\Lambda_1}(x) \in \IM_{\Lambda} \bigcap \ID^e_{\Lambda}=\ID^e_{\Lambda}$. 
Taking von-Hove limit $||x -\IE_{\Lambda_1}(x)|| \raro 0$ as $\Lambda_1 \uparrow \IZ$, we get $x \in \ID^e_{\Lambda}$.  This completes the proof of (b).

\vsp 
For non trivial inclusion $D_e' \subseteq D_e$, we fix an element $x$ in the commutant of $\ID^e$. Then we have 
$$u\IE_{\Lambda}(x)u^*$$
$$=\IE_{\Lambda}(uxu^*)$$
$$=\IE_{\Lambda}(x)$$ 
for all unitary element $u \in \ID^e_{\Lambda}= \otimes_{k \in \Lambda}D^{(k)}_e$ and thus $\IE_{\Lambda}(x) \in \ID^e_{\Lambda}$ since $\ID^e_{\Lambda}$ is a maximal abelian sub-algebra of $\IM_{\Lambda}$ for $|\Lambda| < \infty$. Since $||x-\IE_{\Lambda}(x)|| \raro 0$ as $\Lambda \uparrow \IZ$ as van Hove limit, we conclude that $x \in \ID^e$ as $\ID^e$ is the norm closure of $\{\ID^e_{\Lambda}: \Lambda \subset \IZ \}$.  

\vsp 
For (d), we have the following equalities: 
$$(\IM^e_{\Lambda})'=\IM_{\Lambda}' \bigcap (\ID^e)'$$
$$=\IM_{\Lambda'} \bigcap \ID^e $$
(by Lemma 3.1 (a) )
$$=\ID^e_{\Lambda'}$$ 
(by Lemma 3.1 (b) ), where $\Lambda'$ is the complementary set of $\Lambda$. 

\vsp 
Furthermore, we claim also that 
$$(\ID^e_{\Lambda'})'=\IM^e_{\Lambda}$$
That $\IM^e_{\Lambda} \subseteq (\ID^e_{\Lambda'})'$ is obvious. For the reverse inclusion, let $x \in (\ID^e_{\Lambda'})'$ and consider the sequence of elements $x_n=\IE_{\Lambda_n}(x)$, where $\Lambda_n$ is sequence of subsets containing $\Lambda$ and $\Lambda_n \uparrow \IZ$ as $n \raro \infty$. Then for all $y \in \ID^e_{\Lambda_n \bigcap \Lambda'}$, we have 
$$y\IE_{\Lambda_n}(x)$$
$$=\IE_{\Lambda_n}(yx)$$
$$=\IE_{\Lambda_n}(xy)$$
$$=\IE_{\Lambda_n}(x)y$$
and thus $\IE_{\Lambda_n}(x) \in (\ID^e_{\Lambda_n \bigcap \Lambda'})' \bigcap \IM_{\Lambda_n} 
\subseteq \IM^e_{\Lambda}$. 
Taking limit $n \raro \infty$ in $||x-x_n|| \raro 0$, we get $x \in \IM^e_{\Lambda}$. 
\end{proof}

\vsp 
Let $\clp$ be the collection of orthogonal projections in $\IM$ i.e. $\clp = \{ p \in \IM: p^*=p,\;p^2=p \}$.  
So $\clp$ is a norm closed set and $\omega_{\rho}(e^I_I)={1 \over d^n}$ , where 
$e^I_I=..\otimes I_d \otimes |e_{i_1}\rangle\langle e_{i_1}| \otimes |e_{i_2}\rangle\langle e_{i_2}|..
\otimes |e_{i_n}\rangle\langle e_{i_n}| \otimes I_d ..$, where $e=(e_i:1 \le i \le d)$ is an orthonormal 
basis for $\IC^d$. This shows that $\clp$ has no non-zero minimal projection and range of 
the map $\omega_{\rho}:\clp \raro [0,1] $ at least has all $d-$adic numbers i.e. 
$\cli_d = \{{j \over d^n}: 0 \le j \le d^n,\;n \ge 1 \}$.       

\vsp 
An abelian sub-collection $\clp_0$ of projections is called {\it maximal } if there exists an abelian 
collection of projections $\clp_1$ containing $\clp_0$ then $\clp_1=\clp_0$. As a set $\clp_0$ is a closed subset of $\clp$. It is simple to verify that $\clp_0$ is maximal if and only if any projection commuting with elements in $\clp_0$ is an element itself in $\clp_0$. It is less obvious that the norm closure of the linear span of $\clp_0$ i.e. $\cls(\clp_0)$ is a maximal abelian $C^*$-subalgebra of $\IM$. In the following proposition we prove a simple result first as a preparation for a little more deeper result that follows next. 

\vsp 
\begin{lem} 
The set $\clp_e =\{p \in \ID^e: p^*=p,\;p^2=p \}$ is a maximal abelian set of projections in $\IM$. Furthermore 
$$\clp_e=\{ p \in \ID^e \bigcap \IM_{\Lambda}: p^*=p,\;p^2=p,\;\mbox{finite subset } \Lambda \subset \IZ \}$$    
\end{lem}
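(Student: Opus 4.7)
The plan is to reduce both assertions to Lemma 3.1(c), which identifies $\ID^e$ as its own commutant in $\IM$, and to the topological description of $\ID^e$ as $C(\Omega^{\IZ})$.

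First I would address maximality. Suppose $q \in \IM$ is a projection with $qp=pq$ for every $p \in \clp_e$. Since $\ID^e \equiv C(\Omega^{\IZ})$ and $\Omega^{\IZ}$ is a totally disconnected compact Hausdorff space, the characteristic functions of clopen sets are precisely the projections in $\ID^e$, and their complex linear span is norm dense in $C(\Omega^{\IZ})$ (locally constant functions approximate continuous ones uniformly). Hence $q$ commutes with every element of $\ID^e$, so $q \in (\ID^e)' \bigcap \IM$. By Lemma 3.1(c) this commutant is $\ID^e$ itself, giving $q \in \clp_e$ as required.

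Next I would verify the finite-support description. Fix $p \in \clp_e$ and view $p$ as the indicator $1_U$ of a clopen subset $U \subseteq \Omega^{\IZ}$. Basic clopen sets in the product topology are the cylinder sets determined by the value of finitely many coordinates; these form an open cover of $U$. By compactness of $U$ (it is closed in the compact space $\Omega^{\IZ}$), a finite subcover suffices, so $U$ is a finite union of basic cylinders, and therefore depends only on coordinates from some finite $\Lambda \subset \IZ$. Consequently $p \in \IM_{\Lambda} \bigcap \ID^e = \ID^e_{\Lambda}$, where the equality is Lemma 3.1(b). The reverse inclusion, namely that any such projection lies in $\clp_e$, is immediate since $\ID^e_{\Lambda} \subseteq \ID^e$.

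The only place requiring care is the density of $\textup{span}\,\clp_e$ in $\ID^e$, which is the $C^*$ analogue of approximating a continuous function on a Cantor-type space by step functions: since $\Omega$ is finite, $\Omega^{\IZ}$ is totally disconnected and the locally constant functions, which are precisely the finite complex linear combinations of clopen indicators, are uniformly dense. Given this, the commutant identity from Lemma 3.1(c) does all the real work for maximality, while the compactness argument above converts the topological characterization of clopens into the claimed finite-$\Lambda$ statement. No additional approximation of projections by projections is required, since each element of $\clp_e$ is shown directly to lie in some $\ID^e_{\Lambda}$ with $|\Lambda|<\infty$.
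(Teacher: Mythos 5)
Your proposal is correct and takes essentially the same route as the paper: maximality is reduced to the maximal abelian property of $\ID^e$ from Lemma 3.1(c) via norm density of the span of projections, and the finite-$\Lambda$ claim follows from compactness of $\Omega^{\IZ}$ and a cover by cylinder sets. If anything your write-up is slightly more precise than the paper's, since you correctly identify the support of a projection as a \emph{clopen} set (using openness for the cylinder cover and closedness for compactness), whereas the paper loosely asserts that any closed set is a finite union of cylinders.
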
 

\vsp 
\begin{proof} Let $q$ be a projection in $\IM$ that commutes with all the element of $\clp_e$. Then $q$ also 
commutes with $\ID^e$ and $\ID^e$ being maximal abelian by Lemma 3.1 we get $q \in \ID^e$. Thus $q \in \clp_e$
 
\vsp 
By Lemma 3.1 (c) $\ID^e \equiv C(\Omega^{\IZ})$ and so a projection $p \in \ID^e$ can be identified with an indicator function of a close set. 
The product topology on $\Omega^{\IZ}$ being compact, any closed set is also compact. Thus 
any close set in $\Omega^{\IZ}$ is a finite union of cylinder sets. So 
$p \in \IM_{\Lambda}$ for a finite subset $\Lambda$ of $\IZ$ depending only on $p$.  
\end{proof}

\vsp 
We use the following notions $S^1=\{z \in \IC: |z|=1 \}$ and $(S^1)^d=S^1 \times S^1 ...\times S^1 (d \mbox{-fold})$ in the text. For an element $\ul{z} \in (S^1)^d$, we define automorphism $\beta_{\ul{z}}$ on $\!M_d(\IC)$ by 
\be 
\beta_{\ul{z}}(x)= D_{\ul{z}}x D_{\ul{z}}^*,
\ee
where $D_{\ul{z}}$ is the diagonal unitary matrix $((\delta^i_jz_i))$. We use notation 
$\beta^{(k)}_{\ul{z}}$ for automorphism acting trivially on $\IM$ except on $\!M_d^{(k)}(\IC)$ as $\beta_{\ul{z}}$. For $\ul{z}_k \in (S^1)^d,\;k \in \IZ$, 
$\beta=\otimes_{k \in \IZ}\beta^{(k)}_{\underline{z}_k}$ is an automorphism 
on $\IM$ and $\beta$ commutes with $\theta$ if $\ul{z}_k=\ul{z}$ for all $k 
\in \IZ$. We use notation $\beta_{\ul{z}}$ for $\otimes \beta^{(k)}_{\ul{z}}$. If
$\ul{z}=(z,z,..,z)$ then we simply use $\beta_z$. Note that $\beta_z=I$ on $\IM$ 
for any $z \in S^1$.  

\vsp 
\begin{lem} 
Let $\beta$ be an automorphism on $\IM$ such that $\beta=I$ on 
$\ID^e$ and $\beta \theta = \theta \beta$ on $\IM$. Then $\beta=\otimes_{k \in \IZ}\beta^{(k)}_{\underline{z}}$ 
on $\IM$ for some $\ul{z} \in (S^1)^d$. 
\end{lem}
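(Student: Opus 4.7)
The plan is to determine $\beta$ on each matrix unit $(e^l_m)^{(k)}$, $k \in \IZ$ and $l,m \in \{1,\ldots,d\}$, and then assemble these local determinations into a product of diagonal unitary conjugations. The case $d=1$ being trivial, assume $d \ge 2$.

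The first step is to locate $\beta((e^l_m)^{(k)})$. Since $\beta$ fixes $\ID^e \supseteq \ID^e_{\IZ \setminus \{k\}}$, the image $\beta((e^l_m)^{(k)})$ lies in the commutant of $\ID^e_{\IZ \setminus \{k\}}$, which by Lemma 3.1(d)(ii) equals $\IM^e_{\{k\}}$. As $\!M^{(k)}_d(\IC)$ and $\ID^e_{\IZ \setminus \{k\}}$ are commuting sub-algebras whose join is $\IM^e_{\{k\}}$, we obtain the identification $\IM^e_{\{k\}} \cong \!M^{(k)}_d(\IC) \otimes \ID^e_{\IZ \setminus \{k\}}$. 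Expanding $\beta((e^l_m)^{(k)}) = \sum_{l',m'} (e^{l'}_{m'})^{(k)} \otimes c_{l',m'}$ and applying $\beta$-invariance of the diagonals $(e^i_i)^{(k)}$ to the corner relations $(e^i_i)^{(k)}(e^l_m)^{(k)} = \delta_{i,l}(e^l_m)^{(k)}$ and $(e^l_m)^{(k)}(e^j_j)^{(k)} = \delta_{j,m}(e^l_m)^{(k)}$ collapses the double sum to $\beta((e^l_m)^{(k)}) = (e^l_m)^{(k)} \otimes c_{l,m}^{(k)}$. The isometry relation $(e^l_m)^{(k)}(e^l_m)^{(k)*} = (e^l_l)^{(k)}$ then forces $c_{l,m}^{(k)}(c_{l,m}^{(k)})^* = 1$, so $c_{l,m}^{(k)}$ is a unitary in $\ID^e_{\IZ \setminus \{k\}}$.

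The main obstacle is to upgrade $c_{l,m}^{(k)}$ from a unitary in $\ID^e_{\IZ \setminus \{k\}}$ to a scalar in $S^1$. Fix $k' \neq k$ and any off-diagonal pair $l' \neq m'$; then $[(e^l_m)^{(k)}, (e^{l'}_{m'})^{(k')}] = 0$ forces $[\beta((e^l_m)^{(k)}), \beta((e^{l'}_{m'})^{(k')})] = 0$. Computing this vanishing commutator at site $k'$, the site-$k'$ component of $c_{l,m}^{(k)}$ (which is a diagonal matrix, since $c_{l,m}^{(k)} \in \ID^e_{\IZ \setminus \{k\}}$) must commute with $(e^{l'}_{m'})$, and a diagonal $\sum_i \lambda_i(e^i_i)$ commutes with $(e^{l'}_{m'})$ only if $\lambda_{l'} = \lambda_{m'}$. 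Varying the off-diagonal pair $(l',m')$ forces the site-$k'$ component to be a scalar multiple of the identity; running this over all $k' \neq k$ shows $c_{l,m}^{(k)} \in S^1$.

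Finally, the matrix unit relation $(e^i_j)^{(k)}(e^j_l)^{(k)} = (e^i_l)^{(k)}$ yields the cocycle $c_{i,j}^{(k)} c_{j,l}^{(k)} = c_{i,l}^{(k)}$, together with $c_{i,i}^{(k)} = 1$ from $\beta|_{\ID^e} = I$. Setting $z_i^{(k)} := c_{i,1}^{(k)}$ with $z_1^{(k)} := 1$, the cocycle gives $c_{i,j}^{(k)} = z_i^{(k)} \overline{z_j^{(k)}}$. With $\ul{z}_k := (z_1^{(k)}, \ldots, z_d^{(k)}) \in (S^1)^d$, we then have $\beta((e^i_j)^{(k)}) = D_{\ul{z}_k}(e^i_j)^{(k)}D_{\ul{z}_k}^* = \beta^{(k)}_{\ul{z}_k}((e^i_j)^{(k)})$. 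Since $\beta$ and $\otimes_{k \in \IZ}\beta^{(k)}_{\ul{z}_k}$ agree on all local matrix units and hence on the norm-dense sub-algebra $\IM_{loc}$, they agree on all of $\IM$.
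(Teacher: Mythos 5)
Your first step is sound and is essentially the paper's opening move: since $\beta$ fixes $\ID^e_{\IZ\setminus\{k\}}$ pointwise, $\beta((e^l_m)^{(k)})$ lies in $(\ID^e_{\IZ\setminus\{k\}})'=\IM^e_{\{k\}}$ by Lemma 3.1(d), and the corner relations with the fixed diagonal units do give $\beta((e^l_m)^{(k)})=(e^l_m)^{(k)}c^{(k)}_{l,m}$ with $c^{(k)}_{l,m}$ a unitary in $\ID^e_{\IZ\setminus\{k\}}$; the cocycle identities in your last step are also correct. The gap is the middle step. In the commutator $[\beta((e^l_m)^{(k)}),\beta((e^{l'}_{m'})^{(k')})]$ you only account for the interaction of $c^{(k)}_{l,m}$ with $(e^{l'}_{m'})^{(k')}$, but $c^{(k')}_{l',m'}$ has a diagonal part at site $k$ which need not commute with $(e^l_m)^{(k)}$, and the two failures of commutation can cancel; so the vanishing of the commutator does not force the site-$k'$ part of $c^{(k)}_{l,m}$ to commute with $e^{l'}_{m'}$. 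Concretely, take $d=2$, set $D=e^1_1-e^2_2$, let $W=(e^1_1)^{(k)}+(e^2_2)^{(k)}D^{(k')}$ (a ``controlled-$Z$'', a self-adjoint diagonal unitary in $\ID^e_{\{k,k'\}}$), and put $\beta=\mathrm{Ad}_W$. Then $\beta$ is an automorphism of $\IM$ with $\beta=I$ on $\ID^e$ and it satisfies every relation you invoke, yet $\beta((e^1_2)^{(k)})=(e^1_2)^{(k)}D^{(k')}$ and $\beta((e^1_2)^{(k')})=D^{(k)}(e^1_2)^{(k')}$ commute while $c^{(k)}_{1,2}=D^{(k')}$ is not a scalar. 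So scalarity cannot be deduced from the data you use.

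In fact this example shows the statement fails as written: $\mathrm{Ad}_W$ fixes $\ID^e$ pointwise but does not map $\IM_{\{k\}}$ into itself, hence is not of the form $\otimes_{k\in\IZ}\beta^{(k)}_{\ul{z}_k}$; conjugation by any unitary of $\ID^e$ that is not a product of one-site diagonal unitaries behaves the same way, and a translation-invariant chain of such two-site unitaries even gives a counterexample commuting with $\theta$. So the obstruction is not a missing trick: what your (correct) first step actually establishes is only that $\beta$ preserves each $\IM^e_{\Lambda}$ and acts on matrix units by an $\ID^e$-valued unitary cocycle. For comparison, the paper's proof reaches $\beta(\IM_{\Lambda})\vee\ID^e=\IM^e_{\Lambda}$ and then asserts that this ``clearly'' yields $\beta(\IM_{\Lambda})=\IM_{\Lambda}$; the same $\beta=\mathrm{Ad}_W$ (with $\Lambda=\{k\}$) satisfies the former and violates the latter, so the paper's argument is incomplete at exactly the point where yours breaks. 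To obtain the product form one needs an additional hypothesis, for instance $\beta(\IM_{\{k\}})=\IM_{\{k\}}$ for every $k$, after which both your route and the paper's go through.
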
 

\vsp 
\begin{proof} We fix a finite subset $\Lambda$ of $\IZ$. For any $x \in \IM^e_{\Lambda}=\IM_{\Lambda} \vee \ID^e$, we have $xy=yx$ for all $y \in \ID^e_{\Lambda'}$. Thus 
$$\beta(x)y$$
$$=\beta(x)\beta(y)$$
$$=\beta(xy)$$
$$=\beta(yx)$$
$$=\beta(y)\beta(x)$$
$$=y\beta(x)$$
for all $y \in \ID^e_{\Lambda'}$. 
This shows that $\beta(\IM^e_{\Lambda}) \subseteq \IM^e_{\Lambda}$ since $\IM^e_{\Lambda} = (\ID^e_{\Lambda'})'$ by Lemma 3.1 (d). Furthermore, $\beta$ being an automorphism and 
$\beta^{-1}(x)=x$ for all $x \in \ID^e$, we also have $\beta^{-1}(\IM^e_{\Lambda}) \subseteq \IM^e_{\Lambda}$. Thus we have 
$$\beta(\IM^e_{\Lambda})=\IM^e_{\Lambda}$$ 
In particular, we have 
$$\beta(\IM_{\Lambda}) \vee \ID^e_{\Lambda'}= \IM^e_{\Lambda}$$ 
since $\beta$ fixes all elements in $\ID^e$. The last identity clearly shows that  $\beta(\IM_{\Lambda})=\IM_{\Lambda}$. Since $\beta$ fixes all elements in $\ID^e$ and so in particular all elements in $\ID^e_{\Lambda}$ and $\beta$ commutes with $\theta$, we conclude that 
$$\beta= \beta^{(n)}_{\underline{z}}$$ 
for some $\underline{z} \in (S^1)^d$ on each $\!M^{(n)}_d(\IC),\;n \in \IZ$. Thus $\beta=\otimes_{n \in \IZ}\beta^{(n)}_{\underline{z}}$ on $\IM$ by multiplicative 
property of $\beta$ and universal property of tensor products.     
\end{proof} 

\vsp 
We arrive at the following hyper-rigidity property of maximal abelian 
$C^*$ sub-algebra $\ID^e$.  
\vsp 

\begin{cor}
If actions of two automorphisms $\alpha$ and $\beta$ are equal on $\ID^e$ and commuting with $\theta$ then 
$$\beta  \alpha^{-1} = \otimes_{n \in \IZ} \beta^{(n)}_{\underline{z}}$$
for some $\underline{z} \in (S^1)^d$.
\end{cor}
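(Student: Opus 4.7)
The plan is to reduce the statement directly to Lemma~3.3. I would consider the automorphism $\gamma = \beta \circ \alpha^{-1}$ of $\IM$ and check that $\gamma$ restricts to the identity on $\ID^e$. Once that is established, Lemma~3.3 applies verbatim to $\gamma$ and yields the desired factorisation $\gamma = \otimes_{n \in \IZ} \beta^{(n)}_{\underline{z}_n}$ with $\underline{z}_n \in (S^1)^d$, which is exactly the conclusion of the corollary.

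To verify $\gamma|_{\ID^e} = I$, I would use the hypothesis $\alpha(y) = \beta(y)$ for every $y \in \ID^e$. Reading the hypothesis in the natural way that makes Lemma~3.3 applicable, the common image $\alpha(\ID^e) = \beta(\ID^e)$ is itself $\ID^e$ (it is automatically a maximal abelian $C^*$-subalgebra of $\IM$, being the image of the maximal abelian $\ID^e$ under an automorphism, and the hypothesis fixes it as $\ID^e$). Consequently, for every $x \in \ID^e$ we have $\alpha^{-1}(x) \in \ID^e$, and applying the identity $\alpha = \beta$ at the element $\alpha^{-1}(x) \in \ID^e$ gives $\gamma(x) = \beta(\alpha^{-1}(x)) = \alpha(\alpha^{-1}(x)) = x$.

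The only genuine obstacle is this bookkeeping point: one must legitimately substitute $\alpha$ for $\beta$ at $\alpha^{-1}(x)$, which requires $\alpha^{-1}(\ID^e) \subseteq \ID^e$. Once that is in hand, no further machinery or estimates are needed; the structural work of classifying automorphisms fixing $\ID^e$ pointwise has already been carried out in Lemma~3.3, and the corollary follows by a one-line application.
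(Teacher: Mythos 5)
Your proposal is correct in substance and follows essentially the same route as the paper: both arguments are one-line reductions to Lemma 3.3 via an automorphism built from $\alpha$ and $\beta$ that fixes $\ID^e$ pointwise. The one real difference is which composition is fed into Lemma 3.3. The paper applies the lemma to $\beta^{-1}\alpha$: for $x\in\ID^e$ one has $\beta^{-1}\alpha(x)=\beta^{-1}(\beta(x))=x$ directly from the hypothesis, with no invariance of $\ID^e$ under $\alpha$ or $\beta$ required at that step. You instead work with $\gamma=\beta\alpha^{-1}$ (matching the literal statement), and, as you yourself flag, this forces the extra point $\alpha^{-1}(\ID^e)\subseteq\ID^e$. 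Be aware that your justification of that point is circular: from ``$\alpha=\beta$ on $\ID^e$'' one only gets that $\alpha(\ID^e)=\beta(\ID^e)$ is \emph{some} maximal abelian $C^*$-subalgebra; the hypothesis does not force it to be $\ID^e$. Indeed, if $\alpha$ is any automorphism with $\alpha(\ID^e)\neq\ID^e$ and $\gamma_0$ fixes $\alpha(\ID^e)$ pointwise, then $\beta=\gamma_0\alpha$ satisfies the literal hypothesis while $\beta\alpha^{-1}=\gamma_0$ need not fix $\ID^e$ pointwise, hence need not have the product form; so the invariance $\alpha(\ID^e)=\ID^e$ is genuinely an additional (implicit) assumption, not a consequence. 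This is not a defect peculiar to your write-up: the paper's own statement about $\beta\alpha^{-1}$ needs the same reading, since its proof establishes the product form for $\beta^{-1}\alpha$ and the transfer to $\beta\alpha^{-1}$ again uses that $\alpha$ and $\beta$ preserve $\ID^e$. In the intended application (Theorem 3.6, where $\alpha$ and $\beta$ both extend an automorphism $\beta_0:\ID^e\rightarrow\ID^e$) the invariance holds automatically and your argument is then complete; alternatively, switching to $\beta^{-1}\alpha$ as the paper does makes the application of Lemma 3.3 unconditional.
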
 

\begin{proof} 
The automorphism $\beta^{-1} \alpha$ acts trivially on $\ID^e$ and commutes with $\theta$. Thus
by Lemma 3.3 we get the required result.  
\end{proof} 

\vsp 
We are left to answer a crucial existence question on automorphisms, namely, given an auto-morphism $\beta_0:\ID^e \raro \ID^e$, is there an auto-morphism $\beta:\IM \raro \IM$ extending $\beta_0$? We begin with a lemma to that end. 

\vsp 
\begin{lem} 
Let $\ID \subseteq \IQ$ be unital $C^*$ subalgebras of $\IM$ such that $\ID'=\ID$, $\theta(\ID)=\ID$ and $\theta(\IQ)=\IQ$. Then there exists an automorphism $\alpha$ commuting with $\theta$ such that 
$$(\ID,\theta,\omega_0) \equiv^{\alpha} (\otimes_{n \in \IZ} D_d^{(n)}(\IC),\theta,\omega_0) =(\ID^e,\theta,\omega_0)$$ 
and 
$$(\IQ,\theta,\omega_0) \equiv^{\alpha} (\otimes_{n \in \IZ} \!Q^{(n)},\theta,\omega_0)$$
where $D_d^{(n)}(\IC) \subseteq \!Q^{(n)}$ are isomorphic $C^*$-sub-algebra of $\!M_d^{(n)}(\IC)$ with $\theta(\!Q^{(n)})=\!Q^{(n+1)}$ and $\omega_0$ is the unique normalize trace on $\IM$.
\end{lem} 

\vsp 
\begin{proof} 

\vsp 
Any two maximal abelian $C^*$-subalgberas of an AF algebra are isomorphic by [SS] and so there exists an automorphsim $\beta$ on $\IM$ such that $\beta(\ID^e)=\ID$. As a first step we will show that it is possiblem to construct an automorphism 
$\alpha:\ID^e \raro \ID$ commuting with $\theta$ by modifying $\beta$ once $\theta$ keeps $\ID$ invariant. 

\vsp 
We choose an automorphism $\beta:\IM \raro \IM$ so that $\beta(\ID^e)=\ID$. We now choose a sequence of ablian sub-algberas $\{ A_{[n}:n \in \IZ \}$ of $\ID$ generated by the family of projections $\{\theta^k(\beta(\ID^e_{(0)}):n \le k < \infty \}$. By our construction, we have $A_{[n+1} \subseteq A_{[n}$ for all $n \in \IZ$ and 
each $A_{[n}$ as $C^*$-algebra is isomorphic to $\ID^e_{[n,\infty}$. The inductive limit $C^*$-algebra of 
$A_{[n} \raro^{\theta} A_{[n+1} \subseteq A_{[n}$, say $\IA$ is isomorphic to $\ID^e$ and there is an automorphism $\alpha:\ID^e \raro \IA$ that takes $\ID^e_{\Lambda}$ to $\vee_{k \in \Lambda} \theta^k(\beta(\ID^e_{(0)}))$ commuting with $\theta$. 

\vsp 
We claim that $\IA=\ID$. Suppose not. Then Komogorov-Sinai dynamical entropy of $(\ID,\theta,\omega_0)$ is strictly 
more then Kolmogorov-Sinai dynamical entrop of $(\IA,\theta,\omega_0)$, $ln(d)$. However Connes-St\o rmer dynamical 
entropy of $(\IM,\theta,\omega_0)$ is equal to mean entropy $ln(d)$, where $\omega_0$ is the unique normalised
trace of $\IM$. This brings a contradiction to monotone property of Connes-St\o rmer dynamical entropy since 
$(\ID,\theta,\omega)$ is a $C^*$-dynamical sub-system of $(\IM,\theta,\omega_0)$.  

\vsp 
We aim now to extend automorphim $\alpha:\ID^e \raro \ID$ to $\IM$ commuting with $\theta$. We construct a sequence of increasing finite factors $\{M_n:n \ge 0 \}$ such that $M_n$ contains $B_n=\vee \{\theta^k(\beta(\ID^e_{(0)})):0 \le k \le n \}$ as follows:

\NI For $n=0$, we set $M_0=\beta(\IM_{(0)})$. Suppose we have choosen increasing finite factors 
$M_1,M_2,..., M_k$ so that $B_r \subseteq M_r$ for all $0 \le r \le k$ then we set $M_{k+1}$ to be the mininal factor 
that contains $M_k$ and $\theta(M_k)$. 

\vsp 
We also set $M_{[0}$ as $\vee_{k \ge 0} M_k$. It is clear that $\theta(M_{[0}) \subseteq M_{[0}$ and each $M_k$ being factor, we can verify now that $M_{[0}$ is also a factor as follows: Let $\IE_k$ be the norm one projection from $\IM$ onto $M_k$ preserving 
normalised trace of $\IM$. For any element $x \in M_{[0} \bigcap M'_{[0}$, we have 
$\IE_k(x)y=\IE_k(xy)=\IE_k(yx)=y\IE_k(x)$ 
for all $y \in M_k$ since $x \in M_{[0}' \subseteq M_k'$ as well. 
Thus $\IE_k(x) = \omega_0(x)I$ for each $k \ge 0$. Since $x \in M = \vee_{k \ge 0} M_k$, by taking limit as $n \raro \infty$, we get $\mbox{lim}_{n \raro \infty}\omega_0(\IE_n(x)y)= \omega_0(xy)$ for all $y \in E_k$ and $k \ge 0$ and so $x = \omega_0(x)I$ i.e. $x$ is a scaler multiple of $I$. This proves that $M_{[0}$ is a factor. 

\vsp 
We define by $M_{[n} = \theta^n(M_{[0})$ for all $n \ge 1$ and so 
by our construction we have $\theta(M_{[0}) \subseteq M_{[0}$ and 
$M_{[n+1} \subseteq M_{[n}$ and $A_{[n} \subseteq M_{[n}$ for all $n \ge 0$.  
Let $M$ be the $C^*$ algebra of the inductive limit $M_{[n} \raro ^{\theta} M_{[n+1} \subseteq M_{[n}$ and 
so $M= \vee_{n \in \IZ}M_{[n}$, where $M_{[n}= \theta^n(M_{[0})$ for all $n \in \IZ$. 
Factor property of $M$ follows by the same line of argument used above to show factor property of $M_{[0}$. 
So we have $\theta^n(M_{[0} \vee M_{[0}') = M_{[n} \vee M_{[n}'$ for all $n \in \IZ$. 

\vsp 
By our construction each $M_{[n}$ is a uniformly hyperfinite algebra and so is $M$. So by lemma 2.4, we have 
$M_{[0}''= M_{[0}$. Since $M$ is also a factor that contains a maximal abelian $C^*$-subalgebra $\ID$ of $\IM$, 
we get $M'=\IC$ and so $M = M''=\IM$ by Lemma 2.4. Since $\IM$ is isomorphic to $\IM_{[0}' \otimes \IM_{[0}$, we 
also get $\IM = \IM_{[0}' \vee \IM_{[0}$.    

\vsp 
We use now Proposition 1.1 to find an extension of $\alpha:\ID^e \raro \ID$ to $\alpha:\IM \raro \IM$ commution with $\theta$. We set $\IQ_n =\IQ \bigcap \IM_{[n}$ for all $n \in \IZ$ and verify that $\alpha$ takes $x= \Pi x_n$ in $Q$ in particular 
to $\tilde{x}= \otimes \tilde{x}_n$, where $x_n \in Q_{[n} \bigcap M_{n-1}'$ and $\tilde{x}_n \in \IQ^{(n)} \subseteq 
M_d^{(n)}$.  
 
\end{proof} 

\vsp 
\begin{cor}  
Let $\beta_0$ be an automorphism on $\ID^e$ commuting with $\theta$ then there exists an local automorphism $\beta$ on $\IM$ extending $\beta_0$ commuting with $\theta$. If $\alpha$ is an another automorphism extending $\beta_0$ and commuting with $\theta$ 
then $\beta  \alpha^{-1} = \beta_{\underline{z}}$ for some $\underline{z} \in (S^1)^d$.    
\end{cor}  

\vsp 
\begin{proof} 
For each $n \in \IZ$, we consider the orthogonal projections 
$$f^k_k(n)=\beta_0(|e_k\rangle\langle e_k|^{(n)}):1 \le k \le d$$ 
and the commutative $C^*$ algebra $\clc_0$ generated by elements $\{ \beta_0(|e_k\rangle\langle e_k|^{(n)}):1 \le k \le d,\;n \neq 0 \}$. By Lemma 3.2 the family of projections $(f^k_k(0):1 \le k \le d)$ are elements in $\IM_{\Lambda}$ for some finite subset $\Lambda$ of $\IZ$ and so $\clc'_0 \subseteq \IM_{\Lambda} \vee \ID^e_{\Lambda'}$ 

\vsp 
Furthermore, we find some local automorphism $\alpha_{\Lambda}$ which acts trivially on $\IM_{\Lambda'}$ such that    
$$\alpha_{\Lambda}(\clc_0) = \theta^k(\ID^e_{\IZ_*}),\;\;\IZ_*= \{n \in \IZ, n \neq 0 \}$$ 
for some $k \in \IZ$. By taking the commutant of the equality, we get 
$$\theta_{-k}(\alpha_{\Lambda}(\clc_0')) = \ID^e \vee \!M_d^{(0)}(\IC),$$
where we have used Lemma 3.1 (d). So $\clc_0'$ is isomorphic to $\ID^e \vee \!M_d^{(0)}(\IC)$. 

\vsp 
Thus we can find elements $(f^k_j(0):1 \le k,j \le d)$ in $\clc_0'$ satisfying the matrix relations namely 
$$(f^i_j(0))^*=f^j_i(0): f^i_j(0)f^k_l(0)=\delta^k_jf^i_l(0)$$
extending the isomorphism $\beta_0:\ID^e \raro \ID^e$ to 
$$\beta_0:\ID^e \vee \!M^{(0)}_d(\IC) \raro \clc_0'$$ 
with 
$$\beta_0(e^i_j(0))=f^i_j(0),\;1 \le i,j \le d$$

\vsp 
We set $\beta$ on $\!M_d^{(n)}(\IC)$ by 
$$\beta(x)= \theta^n  \beta_0 \theta^{-n}(x)$$ 
We extend $\beta$ on $\IM_{loc}$ by linearity and $*$-multiplicative property. 
Let $\beta:\IM \raro \IM$ be the unique bounded extension of $\beta:\IM_{loc} \raro \IM$. Thus $\beta$ is a unital $*$-homomorphism.   

\vsp 
Since $\beta_0 \theta = \theta \beta_0$ on $\ID^e$, we verify for $x=\theta^n(x_0)$ with $x_0 \in \ID^e_{\{0\}}$ that 
$$\beta(x) = \theta^n \beta_0  \theta^{-n}  \theta^n(x_0)$$
$$=\theta^n \beta_0(x_0)$$
$$=\beta_0 \theta^n(x_0)$$
$$=\beta_0(x)$$     
Thus the map $\beta$, which is linear and $*$-multiplicative on $\IM$, in particular satisfies $\beta=\beta_0$ on $\ID^e$.  

\vsp 
We need to show $\beta$ is an automorphism on $\IM$. The multiplicative property of $\beta$ on $\IM$ says that $\cln=\{z :\beta(z)=0 \}$ is a two sided ideal of $\IM$. $\IM$ being a 
simple [Pow,ChE,BR1], $C^*$ algebra $\cln$ is either trivial null set or $\IM$. Since $\beta(I)=I$, we conclude that $\cln$ is the trivial null space i.e. $\beta$ is injective. In particular the map $x \raro ||\beta(x)||$ is a $C^*$-norm on $\IM$ where by $*$ homomorphism property of $\beta$ we verify that 
$$||\beta(x)^*\beta(x)||$$
$$=||\beta(x^*)\beta(x)||$$
$$=||\beta(x^*x)||$$  
for all $x \in \IM$.
However, $C^*$ norm being unique on a $*$-algebra if it exists, we get 
$||\beta(x)||=||x||$ for all $x \in \IM$. 

\vsp 
Furthermore, the map $\beta$ being norm preserving, $\beta(\IM)$ is norm closed and thus a $C^*$-sub algebra of $\IM$. Since $\beta \theta = \theta \beta$ on $\IM$, we also have $\theta(\beta(\IM))=\beta(\theta(\IM))=\beta(\IM)$. So the equality $\beta(\IM)=\IM$ 
follows by Lemma 2.4 once we show that $\beta(\IM)'=\IC$.  

\vsp 
Since $\beta(\ID^e)=\beta_0(\ID^e)=\ID^e$ and $\ID^e$ is maximal abelian, an element $x \in \beta(\IM)'$ is also in $\ID^e$ and thus $x=\beta_0(y)$ for some $y \in \ID_e$. Now we verify the following simple equalities for all $z \in \IM$: 
$$\beta(yz-zy)$$
$$=\beta(y)\beta(z)-\beta(z)\beta(y)$$
$$=\beta_0(y)\beta(z)-\beta(z)\beta_0(y)$$
$$=x\beta(z)-\beta(z)x$$
$$=0,$$
where $x \in \beta(\IM)'$. Thus we have $yz-zy=0$ for all $z \in \IM_{\Lambda}$ by the injective property of $\beta$. So we conclude that $y$ is a scalar multiple of identity. This shows $x=\beta_0(y)$ is also a scalar multiple of the identity element of $\IM$ i.e.  
$\beta(\IM)'=\IC$ and so by Lemma 3.4 (b) $\beta(\IM)=\IM$.
\end{proof} 

\section{Connes-St\o rmer Dynamical entropy for $C^*$-dynamical system:} 

\vsp 
In this section we will qickly recall Connes-St\o rmer dynamical entropy for a $C^*$-dynamical system with some additional remarks needed for our purpose. Let $(\cla,\theta,\omega)$ be a $C^*$-dynamical system and $\gamma_i:\clb_i \raro \cla$ be a 
set of channels with mutual entropy 
$$H_{\omega}(\gamma_1,\gamma_2,..,\gamma_n)= \mbox{sup}_{\{\omega_{i_1,i_2,..i_n}:\sum_{i_1,i_2,..,i_n} \omega_{i_1,i_2,..,i_n} = \omega\} } H_{\omega}(\gamma_1,\gamma_2,...\gamma_n,\omega_{i_1,i_2,..,i_n}),$$ 
where 
$$H_{\omega}(\gamma_1,\gamma_2,...\gamma_n,\omega_{i_1,i_2,..,i_n})=
-\omega_{i_1,i_2..,i_n}(1))ln(\omega_{i_1,i_2,...,i_n}(1)) +
\sum_{1 \le k \le n} \sum_{i_k }S(\omega^{(k)}_{i_k} \circ \gamma_k , \omega \circ \gamma_k)
$$  
and $\omega^{(k)}_{i_k}$ is obtained by
taking the sum of the elements $\omega_{i_1,i_2, ...,i_n}$ with the $k$-th index equal to $i_k$. 

\vsp 
Let $h_{\omega}(\gamma,\theta)$ be the entropy of a channel $\gamma:\clb \raro \cla$ given as in definition 3.2.1 of [NS] 
by the limiting value of ${1 \over n+1} H_{\omega}(\gamma,\theta \gamma,..,\theta^n\gamma)$ as $n \raro \infty$.

\begin{thm} 
Let $(\cla_1,\theta_1,\omega_1)$ and $(\cla_2,\theta_2,\omega_2)$ be two $C^*$ dynamical systems. Let $\tau$ be a unital completely positive map from $\cla_1$ to $\pi_{\omega_2}(\cla_2)''$ with $\omega_2 = \omega_1 \tau$ and there exists a sequence of automorphisms $\alpha_n:\cla_1 \raro \cla_2$ satisfying 
\be 
\alpha_n \theta_1= \theta_2 \alpha_n
\ee 
with limiting point as $\tau$ in Arveson's bounded weak topology i.e. $\alpha_n(x) \raro \tau(x)$ as $n \raro \infty$ in bounded weak topology induced by the pair $\cla_1$ and $\pi_{\omega_2}(\cla_2)''$. Similarly let $\eta$ be a unital completely positive map from $\cla_2 \raro \pi_{\omega_1}(\cla_1)''$ with $\omega_1=\omega_2 \eta$ and limiting point of the sequence $\alpha_n^{-1}$ in Arveson's bounded weak topology. Then the following holds:

\vsp 
\NI (a) Unital completely positive maps $\tau:\cla_1 \raro \pi_{\omega_2}(\cla_2)''$ and $\eta:\cla_2 \raro \pi_{\omega_1}(\cla_1)''$ satisfies intertwining relation given in (7).

\vsp 
\NI (b) $h_{\omega_1}(\theta_1)=h_{\omega_2}(\theta_2)$

\end{thm}   

\vsp 
\begin{proof} 
For each $k \ge 1$, by Proposition 3.1.2 (ii) we have
\be 
H_{\omega_2}(\alpha_k \gamma_1, \alpha_k \gamma_2,..., \alpha_k \gamma_n) = H_{\omega_2 \alpha_k}(\gamma_1,\gamma_2,..,\gamma_n)
\ee
The limit supremum of the righthand side in the equality (23) goes to $H_{\omega_1}(\gamma_1,\gamma_2,..,\gamma_2)$ by upper-semicontinuous property 3.1.12 in [NS]. 

\vsp 
We take $\gamma_n = \theta^n \gamma$ for a fixed chennel $\gamma:\clb \raro \cla_1$ and $\epsilon > 0$ to choose $N$ such that
$$h_{\omega_1}(\gamma,\theta_1) - \epsilon \le {1 \over n+1} H_{\omega_1}(\gamma, \theta_1 \gamma, ...\theta_1^n \gamma)$$ 
for all $n \ge N$. 
$$h_{\omega_1}(\gamma,\theta_1) -\epsilon \le {1 \over n+1} (H_{\omega_1}(\gamma, \theta_1 \gamma, ...\theta_1^n \gamma)+\epsilon)$$ 
$$ \le {1 \over n+1} H_{\omega_2 \alpha_k}(\gamma,\theta_1 \gamma,..,\theta_1^n \gamma)$$ 
$$ \le {1 \over n+1} H_{\omega_2}(\alpha_k\gamma, \theta_2 \alpha_k \gamma,...,\theta^n_2 \alpha_k \gamma)$$
for a subsequence of $\alpha_k$ depending on $n$. Now we use Proposition 1.1.11 in [NS] for 
$$h_{\omega_1}(\gamma,\theta_1) -\epsilon \le { 1 \over n+1} H_{\omega_2}(\tau \gamma, \theta_2 \tau \gamma,...\theta_2^n \tau \gamma)$$
where we use finite dimensional property of $\clb$ to ensure bounded weak convergences is equivalent to convergence of 
$\langle f,\alpha_n \gamma (x) g \rangle \raro \langle f, \tau \gamma(x)g \rangle$ for all $x \in \clb$ and $f,g \in \clh_{\omega_2}$, to get $||\tau \gamma- \alpha_n \gamma||_{\omega_2}=\mbox{sup}_{||x|| \le 1} \omega_2((\alpha_n \gamma-\tau \gamma)(x)^*(\alpha_n \gamma- \tau \gamma)(x)) \raro 0$ as $n \raro \infty$ i.e. the convergence required for Proposition 1.1.11 in [NS].  

\vsp 
Thus we have 
$$h_{\omega_1}(\gamma,\theta_1) -\epsilon \le h_{\omega_2}( \tau \gamma , \theta_2) \le h_{\omega_2}(\theta_2)$$
Since $\epsilon$ can be made arbitarilly small, we get $h_{\omega_1}(\gamma,\theta_1) \le h_{\omega_2}(\theta_2)$. This inequality holds for an arbitary channel $\gamma:\clb \raro \cla$ and so $h_{\omega_1}(\theta_1) \le h_{\omega_2}(\theta_2)$. Inter-changing the role of $\theta_1$ and $\theta_2$ in the above argument, we also get $h_{\omega_2}(\theta_2) \le h_{\omega_1}(\theta_1)$. 
\end{proof} 

\vsp 
The set of unital contractive completely positive maps from a $C^*$ algebra $\cla$ to the algebra $\clb(\clh)$ of bounded operators on a Hilbert space $\clh$ is compact in Arveson's bounded weak topology. So for two given $C^*$-dynamics, weak$^*$ isomorphism problem reduces to finding out a sequence of automorphisms $\alpha_n$ satisfying the inter-twinning relation (22).

\vsp
\section{ Abelian partition and Kolmogorov Sinai theorem:}

\vsp 
In this section we review Kolmogorov-Sinai dynamical entropy and its relation with our invariance problem in classical set up.
Let $(\Omega,\clf,\mu)$ be a probability space and $\theta$ be a one-to-one and onto measurable map on 
$\Omega$ so that $\mu \circ \theta =\mu$. In this section we will characterize the maximal class of measurements 
that commute with the class of measurements associated with measurable partitions. We will show although 
the class is larger then that studied by Kolmogorov-Sinai, the dynamical entropy is same.

\vsp
\begin{pro} 
Let $\zeta=(\zeta_i)$ be a family of positive maps so that $\sum_i\zeta_i(1)=1$ and 
$\zeta_i\eta_j=\eta_j\zeta_i$ for any $\eta_j(\psi)=\eta_j\psi$, where $(\eta_j)$ is a partition of the probability 
space $\Omega$ into measurable sets. Then there exists a family of bounded measurable function $\zeta_i$ so that 
$\zeta_i(\psi)=\zeta_i^*\psi\zeta_i$ and $\sum_i\zeta_i^*\zeta_i=1$.
\end{pro} 

\vsp
\begin{proof} 
Since $\mu(\zeta_i(\psi)) \le \mu(\psi)$, there exists a family of bounded measurable functions
$\zeta_i$ so that $\mu(\zeta_i(\psi))=\mu(\zeta^*_i\psi\zeta_i)$ for all bounded measurable function $\psi$. Since 
$\zeta_i$ commutes with $\eta_j(\psi)=\eta_j\psi\eta_j$, we verify that 
$\mu(\eta_j\zeta_i(\psi)\eta_j)=\mu(\zeta_i^*\eta_j\psi\eta_j\zeta_i)$ for any measurable partition $(\eta_j)$.Hence
$\zeta_i(\psi)=\zeta^*_i\psi\zeta_i$. We also note that unless we demand that $\zeta_i$ are non-negative functions
this family of functions are not uniquely determined by the family of positive maps. 
\end{proof} 

\vsp
We denote by $\cll = \{ \zeta: \zeta_i(\psi)=\zeta_i^*\psi\zeta_i,\;\sum_i\zeta_i^*\zeta_i=1 \}.$ For any $\zeta \in
\cll$ we define 
$$H_{\mu}(\zeta)= \mu(\zeta_i(1))S({1 \over \mu(\zeta_i(I)} \mu \circ \zeta_i, \mu)$$
$$=-\sum_i\mu(\zeta^*_i\zeta_i)ln\mu(\zeta^*_i\zeta_i) + \sum_i\mu(\zeta_i^*\zeta_iln\zeta^*_i\zeta_i)$$
where $S$ is the Kullback-Liebler relative entropy and check the following relations as in Kolmogorov-Sinai 
theory hold:

\vsp
\begin{pro} 
For any finite or countable partitions $\zeta,\eta,\beta \in \cll$ following hold:

\NI (a) $H_{\mu}(\zeta \circ \eta) \ge H_{\mu}(\zeta)$

\NI (b) $H_{\mu}(\zeta | \eta \circ \beta) \le H_{\mu}(\zeta|\eta)$

\NI (c) $H_{\mu}(\zeta \circ \eta | \beta) \le H_{\mu}(\zeta | \beta) + H(\eta | \beta)$

\NI (d) $H_{\mu}(\theta(\zeta ) | \theta (\beta)) = H_{\mu}(\zeta | \beta)$
\end{pro}

\vsp
\begin{proof} 
Statement (a) and (b) follows from a more general situation [MO**]. Since $\cll$ is a commutative class, (b) is equivalent
to (c). (d) also follows from the general case. 
\end{proof}

\vsp
We define dynamical entropy $h_{\mu}(\zeta,\theta)$ as in Proposition 5.2
and note in the present case $\zeta$ is an invariant partition for $\mu$. 
   
\vsp
\begin{pro} 
For two finite partitions $\zeta,\eta \in \cll$
$$h_{\mu}(\zeta,\theta) \le h_{\mu}(\eta,\theta) +H_{\mu}(\zeta|\eta)$$
\end{pro}

\vsp
\begin{proof} 
In spirit we follow [Pa]. Take $\zeta_n=\theta^n(\zeta)\circ ...\circ \zeta$ for 
any partition $\zeta$ and verify the following step by Proposition 6.2
$$H_{\mu}(\zeta_n) \le H_{\mu}(\eta_n \circ \zeta_n)$$ 
$$=H_{\mu}(\eta_n) + H_{\mu}(\zeta_n|\eta_n)$$ 
$$\le H_{\mu}(\eta_n)+ \sum_{1 \le k \le n} H_{\mu}(\theta^k(\zeta)|\theta^k(\eta))$$
$$\le H_{\mu}(\eta_n) + nH_{\mu}(\zeta|\eta).$$
Thus the result follows. 
\end{proof}

\bigskip
In case $\eta$ is a simple partition of measurable sets then 
$$H_{\mu}(\zeta|\eta)=
-\sum_{i \in \zeta}\mu(\IE_{\eta}[\zeta_i(1))] ln \IE_{\eta}
[\zeta_i(1)]) + \sum_{i \in \zeta} \mu(\zeta_i(1)ln \zeta_i(1))$$
where $E_{\eta}(\psi)=\sum_j{<\eta_j, \psi>_{\mu} \over
\mu(\eta_j)} \chi_{\eta_j}$ is the conditional expectation on the $\sigma$-field
generated by the partition $\eta$. So given any partition $\zeta \in \cll$
and $\epsilon > 0$ there exists a simple partition $\eta$ so that $H_{\mu}(\zeta|\eta)
\le \epsilon.$ Thus a simple consequence of Proposition 4.3, we have the following result.

\vsp
\begin{thm} 
Let $(\Omega,\clf,\mu)$ be a probability space and
$\theta$ is a one-one and onto measurable $\mu$-invariant map. Then
$$\mbox{sup}_{\zeta \in \cll}h_{\mu}(\zeta,\theta)=\mbox{sup}_{\zeta \in \cll_0}
h_{\mu}(\zeta,\theta).$$
\end{thm} 

\vsp
In the following we show that the dynamical entropy is same even when we restrict to the class $\cll \cap C(\Omega)$ 
or $\cll \cap C^{\infty}(M),$ (in case $\Omega$ is a locally compact topological space ) for a regular measure $\mu$. 
To that end we start with the following proposition where $\cll_0$ is the class of partition of unity into disjoint 
measurable sets $(\zeta_i)$. 

\vsp
\begin{pro} 
Let $\Omega$ be a compact Hausdorff space with a regular measure $\mu$. For any fixed 
partition $\zeta \in \cll_0$ and $\epsilon > 0$ there exists a partition $\eta \in \cll \cap C(\Omega)$ so that
$$H_{\mu}(\zeta|\eta) \le \epsilon$$
\end{pro} 

\vsp
\begin{proof} 
Given an element $\zeta \in \cll_0$ i.e. a partition of unity into disjoint measurable 
sets $(\zeta_i)$, we choose by
regularity of the measure $\mu$ a sequence of  
non-negative continuous functions $(\eta^n_i)$ so that $\mu(\eta^n_i\zeta_j) \raro
\mu(\zeta_i)\delta^i_j$. Now we set $\zeta^n_i={\eta^n_i \over \sum_{i \zeta}\eta_i } $. Thus $\zeta^n$ is 
a partition of unity with elements in $C(\Omega)$.  
Thus $0 \le H_{\mu}(\zeta \circ \zeta^n)-H_{\mu}(\zeta^n) 
\le H^c_{\mu}(\zeta \circ 
\zeta^n) - H^c_{\mu}(\zeta^n) \raro 0 $.
\end{proof}  

\vsp
\begin{thm} 
Let $\Omega$ be a compact Hausdorff space with a 
regular probability measure $\mu$ and $\theta$ be a homeomorphism on 
$\Omega$. Then 
$$h_{\mu}(\theta)=\mbox{sup}_{\zeta \in \cll \cap C(\Omega) } 
h(\zeta,\theta).$$
and $h_{\omega}(\theta)$ is an invariance i.e. 
$$h_{\mu}(\theta) = h_{\mu \alpha^{-1}}(\alpha \theta \alpha^{-1}),$$
where $\alpha:\Omega \raro \Omega'$ is an automorphism from $\Omega$ 
onto $\Omega'$.  

\end{thm}

\vsp
\begin{proof} 
It follows from Proposition 5.5 and the basic inequality 
in Proposition 5.3.
\end{proof} 

\vsp 
Now we have the following obvious invariance theorem crucial for present problem.  

\vsp 
\begin{thm} 
Let $(\Omega_1,\theta_1,\omega_1)$ and $(\Omega_2,\theta_2,\omega_2)$ be two dynamics as in Theorem 4.6 and $\mu_i$ be their associated regular measures such that $\omega_i(f)=\int_{\Omega_i}fd\mu_{\omega_i}$. Let 
$\alpha:C(\Omega_1) \raro L^{\infty}(\Omega_2,d\mu_{\omega_2})$ and $\beta:C(\Omega_2) \raro L^{\infty}(\Omega_1,d\mu_{\omega_1})$ be two unital positive maps satisfying the following relation: 

\vsp 
\NI (a) $\alpha$ (res $\beta$) be bounded weak limit point of a sequence of automorphisms $\alpha_n:\Omega_1 \raro \Omega_2$ (res $\alpha_n^{-1}:\Omega_2 \raro \Omega_1$ ) that inter-twins $\theta_1$ and $\theta_2$ i.e. $\alpha_n \theta_1 = \theta_2 \alpha_n$ ( res $\alpha_n^{-1} \theta_2 = \theta_1 \alpha^{-1}_n$ ) for each $n \ge 1$.  

\vsp  
\NI (b) $$\omega_1= \hat{\omega}_2 \alpha,\;\;\alpha \theta_1 = \hat{\theta}_1 \alpha\; \mbox{on}\; C(\Omega_1)$$

\vsp
\NI (c) $$\omega_2 = \hat{\omega}_1 \beta, \;\; \beta \theta_2 = \hat{\theta}_2 \beta\; \mbox{on}\; C(\Omega_2)$$  

\vsp 
Then 
$$h_{\omega_1}(\theta_1)=h_{\omega_2}(\theta_2)$$
\end{thm} 

\vsp 
\begin{proof} In principle we will follow the proof given for Theorem 4.1 with obvious adaptation. We have 
$$H_{\omega_2}(\alpha_k(\ul{\zeta}) \circ \theta_2(\alpha_k(\ul{\zeta})) \circ ..\theta_2^n(\alpha_k(\ul{\zeta}))) = H_{\omega_2 \alpha_k }(\ul{\zeta} \circ \theta_1(\ul{\zeta}) \circ \theta^n_1(\ul{\zeta}))$$
if $\alpha:\Omega_1 \raro \Omega_2$ were an auto-morphism inter-twinning between $\theta_1$ and $\theta_2$ and $\alpha(\ul{\zeta})=(\alpha \zeta_i \alpha^{-1})$ is a partition in 
$\cla_2$ for a partion $\ul{\zeta}$ for $\cla_1$. 

\vsp 
For a given $\epsilon > 0$, we choose $N$ so that for $n \ge N$ and use continuity in the variable $\omega_2 \omega_n$ in the right hand side to get 
$$h_{\omega_1}(\ul{\zeta},\theta_1)-\epsilon $$
$$\le {1 \over n+1} (H_{\omega_1}(\ul{\zeta} \circ \theta_1(\ul{\zeta})\circ ..\theta_1^n(\ul{\zeta}))+\epsilon)$$
$$ \le {1 \over n+1}H_{\omega_2 \alpha_k}(\ul{\zeta} \circ \theta_1(\ul{\zeta}) ..\circ \theta_1^n(\ul{\zeta}))$$
i.e. 
$$\le {1 \over n+1} H_{\omega_2}( \alpha_k(\ul{\zeta})) \circ \alpha_k(\theta_1(\ul{\zeta}_1)) \circ ..\alpha_k(\theta_1^n(\ul{\zeta}_1)))$$ 
i.e. 
 
$$h_{\omega_2}(\ul{\zeta},\theta_1) -\epsilon \le {1 \over n+1} H_{\omega_2}(\alpha_k(\ul{\zeta}) \circ \theta_2(\alpha_k(\ul{\zeta})) \circ \theta^n_2(\alpha_k(\ul{\zeta})))$$
for a sub-sequence $(\alpha_k)$ depending on $N$ and $n \ge N$ and so limiting value of the right hand side is 
$${1 \over n }H_{\omega_2}(\alpha(\ul{\zeta} \circ \theta_2(\alpha(\ul{\zeta}) ..\circ \theta_2^n\alpha(\ul{\zeta})$$ 
as $\alpha_k \raro \alpha$ in weak-bounded topology.

So 
$$h_{\omega_1}(\ul{\zeta},\theta_1) \le h_{\omega_1}(\alpha(\ul{\zeta}),\theta_2) \le h_{\omega_2}(\theta_2)$$ 
Now we use symmetry of the argument for $h_{\omega_1}(\theta_1) \le h_{\omega_2}(\theta_2)$ and thus equality holds.

\end{proof}

\section{The mean entropy and Connes-St\o rmer dynamical entropy}

\vsp 
\begin{pro} 
Let $\omega$ be a translation invariant state of $\IM$. Then $\omega=\omega \IE^e_{\omega_0}$ if and only if $s(\omega)=h_{KS}(\ID^e,\theta,\omega)$ for some orthonormal basis $e=(e_i)$ of $\IC^d$, where $h_{KS}(\ID^e,\theta,\omega)$ is the Kolmogorov-Sinai dynamical entropy of $(\ID^e,\theta,\omega)$ and $\IE^e_{\omega_0}$ is the norm one projection from $\IM$ on $\ID^e$ preserving the unique normalise trace $\omega_0$ of $\IM$. 
\end{pro}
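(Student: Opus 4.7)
The plan is to introduce the comparison state $\omega':=\omega\circ\IE^e_{\omega_0}$ and compare $\omega$ with it through a relative entropy computation on local algebras.

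First, I record the structural facts about $\omega'$. Since the conditional expectation $\IE^e_{\omega_0}:\IM\raro\ID^e$ commutes with $\theta$ (the site-wise diagonal projection and the normalised trace are both translation covariant), the state $\omega'$ is translation invariant. For a finite $\Lambda\subset\IZ$, the bi-module property of $\IE^e_{\omega_0}$ together with the trace-preserving property gives $\rho^{\omega'}_\Lambda=\IE^{e,\Lambda}_0(\rho^\omega_\Lambda)$, the diagonal of $\rho^\omega_\Lambda$ in the product basis $\otimes_\Lambda e$. Consequently $S(\rho^{\omega'}_\Lambda)$ coincides with the Shannon entropy of the classical measure $\omega|_{\ID^e}$ on $\Omega^\Lambda$, and passing to the Van Hove limit one gets $s(\omega')=h_{KS}(\ID^e,\theta,\omega)$.

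The key identity, verified by the same bi-module manipulation as in step (e) of the proof of Theorem 4.1, reads
$$S(\rho^\omega_\Lambda\,|\,\rho^{\omega'}_\Lambda)=S(\rho^{\omega'}_\Lambda)-S(\rho^\omega_\Lambda)\ge 0,$$
with equality if and only if $\rho^\omega_\Lambda=\rho^{\omega'}_\Lambda$. The ``if'' direction of the proposition is then immediate: if $\omega=\omega\circ\IE^e_{\omega_0}$ then $\omega=\omega'$, so $\rho^\omega_\Lambda=\rho^{\omega'}_\Lambda$ on every finite $\Lambda$, and dividing by $|\Lambda_n|$ before passing to the Van Hove limit yields $s(\omega)=s(\omega')=h_{KS}(\ID^e,\theta,\omega)$.

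For the converse, the hypothesis $s(\omega)=h_{KS}(\ID^e,\theta,\omega)$ together with the above identity gives vanishing of the mean relative entropy, namely $\lim_n|\Lambda_n|^{-1}S(\rho^\omega_{\Lambda_n}\,|\,\rho^{\omega'}_{\Lambda_n})=0$. The task is to upgrade this to the pointwise equality $\rho^\omega_\Lambda=\rho^{\omega'}_\Lambda$ on every finite $\Lambda$, and this is the \emph{main obstacle}: a priori the relative entropy is only monotone (not additive) under restriction, so vanishing of the mean does not formally force vanishing at each $\Lambda$. I would first reduce to the case of ergodic $\omega$ by exploiting the affinity of both $s(\cdot)$ and $h_{KS}(\ID^e,\theta,\cdot)$ under the ergodic decomposition. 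For ergodic $\omega$, the plan is to combine monotonicity of the relative entropy over disjoint translates of a fixed local block $\Lambda_0$ covering $\Lambda_n$ with the special structure of $\omega'$ (namely $\omega'(y)=0$ whenever $\IE^{e,\Lambda}_0(y)=0$) to extract a super-additivity estimate that forces $S(\rho^\omega_{\Lambda_0}\,|\,\rho^{\omega'}_{\Lambda_0})=0$, hence $\rho^\omega_{\Lambda_0}=\rho^{\omega'}_{\Lambda_0}$. Since $\Lambda_0$ is arbitrary and $\IM_{loc}$ is norm dense in $\IM$, this yields $\omega=\omega'$ as required.
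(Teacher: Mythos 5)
Your easy direction and your reduction of the problem are fine: $\omega^e:=\omega\,\IE^e_{\omega_0}$ is translation invariant, its local density matrices are the diagonal pinchings of those of $\omega$, the identity $S(\omega_n,\omega^e_n)=S(\omega^e_n)-S(\omega_n)$ holds because $\ln\rho^{\omega^e}_{\Lambda_n}$ is diagonal, and the whole statement comes down to showing that $s(\omega)=h_{KS}(\ID^e,\theta,\omega)$ forces $S(\omega_k,\omega^e_k)=0$ for every finite block. But at exactly that point your proposal stops being a proof: you yourself label the passage from vanishing \emph{mean} relative entropy to vanishing \emph{local} relative entropy as the main obstacle, and the mechanism you offer (ergodic decomposition plus an unspecified way of ``extracting a super-additivity estimate'' from monotonicity and the support property of $\omega^e$) is not carried out. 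Monotonicity alone only gives $S(\omega_n,\omega^e_n)\ge S(\omega_k,\omega^e_k)$ for $k\le n$, which dies after division by $n$; and ergodicity of $\omega$ does not by itself produce the needed lower bound, so the reduction to ergodic components does not remove the obstacle, it only adds the (justifiable but extra) burden of affinity of $s$ and $h_{KS}$.

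The paper closes precisely this gap with a specific tool you do not invoke: the decomposition of relative entropy under a conditional expectation preserving the second state ([OP], [NS]). Taking $\IP^e_n=\IM_{\Lambda_{n-1}}\vee\ID^e_{\{n\}}\subset\IM_{\Lambda_n}$ and using that $\omega^e_n$ is invariant under the trace-preserving norm one projection $\IE_{\IP^e_n}$ (it is already diagonal at site $n$), one has
$$S(\omega_n,\omega^e_n)=S(\omega_n|\IP^e_n,\omega^e_n|\IP^e_n)+S(\omega_n,\omega_n\IE_{\IP^e_n}),$$
and then monotonicity (restrict the first term to $\IM_{\Lambda_{n-1}}$, the second to the single site $\!M^{(n)}_d(\IC)$) together with translation invariance gives $S(\omega_n,\omega^e_n)\ge S(\omega_{n-1},\omega^e_{n-1})+S(\omega_1,\omega^e_1)$, hence $S(\omega_{nk},\omega^e_{nk})\ge n\,S(\omega_k,\omega^e_k)$. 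Combined with $S(\omega_{nk},\omega^e_{nk})=S(\omega^e_{nk})-S(\omega_{nk})$ and the hypothesis, this yields $0\le\frac{1}{k}S(\omega_k,\omega^e_k)\le\frac{1}{nk}\bigl(S(\omega^e_{nk})-S(\omega_{nk})\bigr)\raro h_{KS}(\ID^e,\theta,\omega)-s(\omega)=0$, so $\omega_k=\omega^e_k$ for every $k$, and translation invariance plus density of $\IM_{loc}$ finishes the argument for arbitrary (not necessarily ergodic) $\omega$. So the missing ingredient in your write-up is this conditional-expectation identity for relative entropy; once you have it, the ergodic decomposition detour is unnecessary.
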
 

\begin{proof} 
We fix a translation invariant state $\omega$ of $\IM$ and consider the state $\omega^e=\omega \IE^e_{\omega_0}$ on $\IM$, where $\IE^e_{\omega_0}$ is the norm one projection from $\IM$ onto $\ID^e$ preserving the unique normalised trace $\omega_0$ of $\IM$. For any finite subset $\Lambda$ of $\IZ$, the restriction of $\IE^e_{\omega_0}$ to $\IM_{\Lambda}$ is also a norm one projection with respect to normalised trace of $\IM_{\Lambda}$ and thus we have 
$$S(\omega \IE^e_{\omega_0}|\IM_{\Lambda})=S(\omega|\IM_{\Lambda})$$ 

\vsp 
We consider the restrictions $\omega_n$ and $\omega^e_n$ of two states $\omega$ and $\omega^e$ respectively to $\IM_{\Lambda_n}$, where $\Lambda_n=\{k: 1 \le k \le n \}$. 
We need to show only `if' part of the statement as `only if' part is obvious since $S(\omega|\IM_{\Lambda_n})=S(\omega \IE^e_{\omega_0} | \IM_{\Lambda_n})$ for each $n \ge 1$ by the remark above. 

\vsp 
We also consider $C^*$ sub-algebras $\IP^e_n=\IM_{\Lambda_{n-1}} \vee \ID^e_{n}$ of $\IM_{\Lambda_n}$. Note that 
$\omega^e_n = \omega^e_n \IE_{\IP^e_n}$, where $\IE_{\IP^e_n}$ is the norm one projection from 
$\IM_{\Lambda_n}$ onto $\IP^e_n$ with respect to the normalised trace. Thus by a basic theorem [OP,NS], we have
$$S(\omega_n,\omega^e_n) = S(\omega_n|\IP^e_n,\omega^e_n|\IP^e_n) +  S(\omega_n, \omega_n \IE_{\IP^e_n})$$  
Since 
$$S(\omega_n|\IP^e_n,\omega^e_n|\IP^e_n) \ge S(\omega_{n-1},\omega^e_{n-1})$$ 
and 
$$S(\omega_n,\omega_n \IE_{\IP^e_n}) \ge S(\omega_n|\!M^{(n)}(\IC),\omega_n \IE_{\IP^e_n}|\!M^{(n)}(\IC))$$ 
by monotonicity property of the relative entropy, we get by the translation invariant property of the states $\omega$ and $\omega^e$ that 
$$S(\omega_n,\omega^e_n) \ge S(\omega_{n-1},\omega^e_{n-1}) + S(\omega_1,\omega^e_1)$$  
By induction on $n \ge 1$, we get 
$$S(\omega_n,\omega^e_n) \ge n S(\omega_1,\omega^e_1)$$
Along the same line of argument, we also get
$$S(\omega_{nk},\omega^e_{nk}) \ge n S(\omega_k,\omega^e_k)$$
However we have  
$$S(\omega_n,\omega^e_n) = -tr_0(\hat{\omega}_n (ln(\hat{\omega}_n) - ln(\hat{\omega}^e_n))$$ 
$$=S(\omega_n)-tr_0(\IE_{\omega_0}^e(\hat{\omega}_n)ln(\hat{\omega}^e_n))$$
(by bi-module property) 
$$=S(\omega_n)-S(\omega^e_n)$$ 
for each $n \ge 1$ and 
$$\mbox{lim}_{n \raro \infty}{1 \over n}S(\omega^e_n)=h_{\omega}(\ID^e,\theta)$$ 

\vsp 
Thus $0 \le {1 \over k} S(\omega_k,\omega^e_k) \le { 1 \over nk} (S(\omega^e_{nk})-S(\omega_{nk}))$ for all $k \ge 1$ and the equality $s(\omega)=h_{\omega}(\ID^e,\theta)$ implies that $S(\omega_k,\omega^e_k)=0$ and so $\omega_k= \omega^e_k$ for all $k \ge 1$. Thus we have $\omega_R = \omega^e_R$ on $\IM_R$. Similarly we also have 
$\omega =\omega^e$ on $\theta^n(\IM_R)$ for all $n \in \IZ$. By taking inductive 
limit of these two family of states to $-\infty$, we complete the proof. 
\end{proof} 

\vsp 
\begin{pro} 
Let $\omega$ be a translation invariant state $\omega$ of $\IM$ and $\IE^e$ be a norm one projection from $\IM$ on $\ID^e$ such that $\omega = \omega \IE^e$. Then 
$s(\omega)$, $h_{CS}(\IM,\theta,\omega)$ and $h_{KS}(\ID^e,\theta,\omega)$ are equal.
\end{pro}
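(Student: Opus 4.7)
The plan is to reduce the hypothesis of the proposition to that of Proposition 5.1 by showing the norm one projection onto $\ID^e$ is unique, and then to bracket $h_{CS}(\IM,\theta,\omega)$ between $h_{KS}(\ID^e,\theta,\omega)$ and $s(\omega)$ using standard monotonicity of the Connes-St\o rmer entropy.

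First I would establish that any norm one projection $\IE^e : \IM \raro \ID^e$ coincides with the trace-preserving pinching $\IE^e_{\omega_0}$. By Tomiyama's theorem $\IE^e$ is a conditional expectation and hence satisfies the bimodule property over $\ID^e$. For a matrix unit $e_{ij}^{(k)}$ with $i\neq j$ at site $k$, applying the bimodule identity with $y=e_{ii}^{(k)}$ on the left and $z=e_{jj}^{(k)}$ on the right (both in $\ID^e$) together with $e_{ii}^{(k)}e_{ij}^{(k)}e_{jj}^{(k)} = e_{ij}^{(k)}$ gives $\IE^e(e_{ij}^{(k)})=e_{ii}^{(k)}\IE^e(e_{ij}^{(k)})e_{jj}^{(k)}$; since $\IE^e(e_{ij}^{(k)})\in\ID^e$ commutes with the orthogonal diagonal projections $e_{ii}^{(k)}$ and $e_{jj}^{(k)}$, this forces $\IE^e(e_{ij}^{(k)})=0$. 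The same bimodule computation applied site by site to monomials of matrix units shows $\IE^e$ agrees on $\IM_{loc}$ with $\IE^e_{\omega_0}$, and by norm density and the contractivity of $\IE^e$ we conclude $\IE^e = \IE^e_{\omega_0}$ on all of $\IM$. Consequently the hypothesis $\omega=\omega\IE^e$ is equivalent to $\omega=\omega\IE^e_{\omega_0}$, and Proposition 5.1 yields $s(\omega)=h_{KS}(\ID^e,\theta,\omega)$.

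For the Connes-St\o rmer entropy, I would invoke two facts standard for this entropy: its monotonicity under a $\theta$-invariant subalgebra inclusion, giving $h_{CS}(\ID^e,\theta,\omega)\le h_{CS}(\IM,\theta,\omega)$, and the identification $h_{CS}(\ID^e,\theta,\omega)=h_{KS}(\ID^e,\theta,\omega)$ valid because $\ID^e$ is abelian. Combined with the bound $h_{CS}(\IM,\theta,\omega)\le s(\omega)$ recalled in the introduction, this produces the chain
\[ h_{KS}(\ID^e,\theta,\omega)=h_{CS}(\ID^e,\theta,\omega)\le h_{CS}(\IM,\theta,\omega)\le s(\omega)=h_{KS}(\ID^e,\theta,\omega), \]
forcing equality throughout, as claimed.

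The main obstacle is the uniqueness of the conditional expectation onto the maximal abelian $\ID^e$: one has to handle the infinite tensor product carefully, but the bimodule computation is entirely local and propagates by density. Once this is in hand, the rest is a routine combination of Proposition 5.1 with classical properties of the Connes-St\o rmer dynamical entropy.
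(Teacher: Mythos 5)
Your proposal is correct, and its skeleton (bracketing $h_{CS}(\IM,\theta,\omega)$ between $h_{KS}(\ID^e,\theta,\omega)$ and $s(\omega)$) is the same as the paper's, but the two halves are reached differently. The paper does not go through Proposition 5.1 at all: it gets the bound $s(\omega)\le h_{KS}(\ID^e,\theta,\omega)$ directly from the variational inequality $S_{\Lambda_n}\le -\sum_{i_1,\ldots,i_n}\omega(\zeta_{i_1,\ldots,i_n})\ln\omega(\zeta_{i_1,\ldots,i_n})$, which holds for \emph{every} translation invariant state, and uses the hypothesis $\omega=\omega\IE^e$ only to invoke Theorem 3.2.2 of [NS] for $h_{KS}(\ID^e,\theta,\omega)\le h_{CS}(\IM,\theta,\omega)$, finishing with $h_{CS}(\IM,\theta,\omega)\le s(\omega)$. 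You instead first prove that any norm one projection onto $\ID^e$ is the tracial pinching $\IE^e_{\omega_0}$; your Tomiyama/bimodule computation on matrix units is correct, and this uniqueness is actually a useful strengthening of what the paper records (the paper only uses uniqueness of the $\omega_0$-preserving projection, and in Proposition 5.4 it has to detour through Propositions 5.1 and 5.2 to replace $\IE^e$ by $\IE^e_{\omega_0}$ -- your lemma makes that immediate). You then read off $s(\omega)=h_{KS}(\ID^e,\theta,\omega)$ from the easy direction of Proposition 5.1, which is equivalent in content to the paper's variational step. One caution on your upper chain: monotonicity of the Connes--St\o rmer/CNT entropy under a $\theta$-invariant subalgebra inclusion is \emph{not} an unconditional standard fact; the known result -- and precisely the one the paper cites, [NS, Theorem 3.2.2] -- assumes an $\omega$-preserving conditional expectation onto the subalgebra, since without it decompositions of $\omega|_{\ID^e}$ need not lift to decompositions of $\omega$. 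In your setting this is harmless, because the hypothesis $\omega=\omega\IE^e$ (with $\IE^e=\IE^e_{\omega_0}$ by your uniqueness step) supplies exactly the required expectation, but you should phrase and cite that conditional-expectation version rather than generic monotonicity; with that adjustment your chain $h_{KS}(\ID^e,\theta,\omega)=h_{CS}(\ID^e,\theta,\omega)\le h_{CS}(\IM,\theta,\omega)\le s(\omega)=h_{KS}(\ID^e,\theta,\omega)$ closes correctly.
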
 

\begin{proof} 
For a translation invariant state $\omega$ of $\IM$, we always have $h_{CS}(\IM,\theta,\omega) \le s(\omega)$ [NS]. Furthermore,  
$$S_{\Lambda_n} \le \sum_{i_1,..,i_n} -\omega(\zeta_{i_1,..i_n})ln(\omega(\zeta_{i_1,..,i_n}),$$ 
where $\zeta_{i_1,..,i_n} = \zeta_{i_1} \theta(\zeta_{i_2}) ..\theta^{n-1}(\zeta_{i_n})$ and $\zeta=(\zeta_i)$ is the partition of unity with 
the orthogonal projections $\zeta_i=|e_i \rangle \langle e_i|^{(1)},\;1 \le i \le d$ in $\IM^{(1)}$. Thus $s(\omega) \le h_{KS}(\ID^e,\theta,\omega)$. However, since $\omega = \omega \circ \IE^e$, Theorem 3.2.2 in [NS] also says that 
$$h_{KS}(\ID^e,\theta,\omega) \le h_{CS}(\IM,\theta,\omega)$$ 
Thus we get the required equality. 
\end{proof}  

\vsp 
We recall here from [CT], a normal weight $\phi$ on a properly infinite von-Neumann algebra 
$\clm$ is called {\it integrable } if the set of integrable elememts 
$\clp_{\phi} = \{ X \in \clm: \int_{\IR} \sigma^{\phi}_t(X) dt \in \clm \}$ is weak$^*$-dense in $\clm$, where the integral 
is also defined in weak$^*$ topology. For an integrable element $X \in \clp_{\phi}$ of a normal faithful weight $\phi$, analytic element $\int_{\IR} f(t)\sigma_t^{\phi}(X)dt \in \clm$ is also in $\clp_{\phi}$, where $f$ is any element in 
$L^1(\IR)$. So $\clp_{\phi} \bigcap \cln_{\phi}$ is weak$^*$ dense in $\clm$, if $\cln_{\phi}$ is a weak$^*$ dense sub-algbera of analytic elements of $(\sigma^{\phi}_t)$ in $\clm$. Futhermore, there exists a weak$^*$-dense sublagebra of $\clm$ contained in $\clp_{\phi} \bigcap \cln_{\phi}$ such that the normal $\phi$-preserving norm one projection map 
$\IE_{\phi}$ onto 
$\clm_{\sigma^{\phi}}$ is given by 
$$\IE_{\phi}(X)= \int_{\IR} \sigma^{\phi}_t(X)dt$$
for all $X \in \cln_{\phi}$. 

\vsp 
\begin{lem} 
Let $\phi$ be a normal faithful weight on a von-Neumann algebra $\clm$ with infinite multiplicity then $\clm_{\sigma^{\phi}}' \bigcap \clm \subseteq \clm_{\sigma^{\phi}}$;  
\end{lem}

\vsp 
\begin{proof} 
By Theorem 5.1 in [CT], the statement in Lemma 6.3 is true for any integrable normal weight on a von-Neumann algebra $\clm$. For the general case, we will use approximation method used in Theorem 4.7 in [CT] for normal faithful weight of infinite multipiclity. As in Theorem 4.7 in [CT], we choose a self-adjoint element $h \in \clm_{\phi}$ with absolutely continuous spectrum in commutant of a type-$I_{\infty}$ sub-factor $F_{\infty}$ of $\clm_{\phi}$ so that $F_{\infty} \bigcap \{h \}'$ is properly infinite. For each $n \ge 1$, we choose self-ajoint element affiliated to $\{h \}''$ so that 
$$-{1 \over n} < h_n-I < {1 \over n}$$
and consider the sequence of faithful nornal integrable weights $\phi_n$ defined on $\clm_{\phi}$ by $\phi_{h_n}(Y)=\phi(h_nY)$ for all $Y \in \clm_{\phi}$. We extend these weights to $\clm$ defined by $\phi_n:X \raro \phi_{h_n} \circ \IE_{\phi}(X)$ for all $X \in \clm$. 

\vsp 
So by Connes cocycle theorem [Con,CT], we have 
\be 
\sigma_t^{\phi_n}(X) = h_n^{it} \sigma_t^{\phi}(X)h_n^{-it}
\ee
and invariant algebra $\clm_{\phi_n}$ is independent of $n \ge 1$ and thus each $\clm_{\phi_n} = \clm_{\phi} \bigcap \{ h \}'$ since $h^{it}_n \raro I$ in strong operator topology as $n \raro \infty$. 

\vsp 
So it is easy to check the following set-theoretic inclusions since $\clm_{\phi_n} \subseteq \clm_{\phi}$ and each $\phi_n$ is integrable:  
$$\clm_{\phi}' \bigcap \clm \subseteq \clm'_{\phi_n} \bigcap \clm \subseteq \clm_{\phi_n} \subseteq \clm_{\phi}$$
\end{proof} 

\vsp 
\begin{cor} 
Let $\phi$ be a faithful normal weight on a properly infinite von-Neumann algebra $\clm$ then $$\clm'_{\sigma^{\phi}} \bigcap \clm \subseteq \clm_{\sigma^{\phi}}$$
\end{cor}

\vsp 
\begin{proof}  
Let $F_{\infty}$ be a type-$I_{\infty}$ factor. Then $\phi \otimes tr$ is a normal faithful weight on $\clm \otimes F_{\infty}$, so by Proposition 6.3, we have 
$(\clm_{\sigma^{\phi}}' \bigcap \clm) \otimes I \subseteq (\clm_{\sigma^{\phi}} \otimes F_{\infty})' \bigcap \clm \otimes F_{\infty} \subseteq \clm_{\sigma^{\phi}} \otimes F_{\infty}$. This clearly shows the required inclusion. 
\end{proof}  

\vsp 
\begin{pro} 
Let $\omega$ be a faithful state of $\IM$ and $(\clh_{\omega},\pi_{\omega},\zeta_{\omega})$ be its GNS space with von-Neumann algebra $\clm=\pi_{\omega}(\IM)''$. Let $\phi$ be a normal faithful weight on $\clm$ and $(\sigma_t^{\phi})$ be its
modular group on $\clm$. Then the following hold:

\vsp 
\NI (a) There exists a unique group of automorphisms $(\hat{\sigma}^{\phi}_t)$ on $\IM$ such that 
$$\sigma^{\phi}_t(\pi_{\omega}(x))=\pi_{\omega}(\hat{\phi}^{\phi}_t(x))$$  
for all $t \in \IR$ and $x \in \IM$; 

\vsp 
\NI (b) The centralizer $\clc_{\phi} = \{a \in \clm : \phi(ab)=\phi(ba),\;\forall b \in \clm \}$ of $\phi$ in $\clm$ is equal to $\clm_{\sigma^{\phi}}$, where $\clm_{\sigma^{\phi}} = \{a \in \clm: \sigma_t^{\phi}(a)=a, \; \forall t \in \IR \}$ is the von-Neumann subalgebra of invariant elements of the modular automorphisms $(\sigma_t^{\phi})$. 

\vsp 
\NI (c) The centralizer $\IC_{\phi} = \{x \in \IM : \phi(\pi_{\omega}(xy))=\phi(\pi_{\omega}(yx)),\;\forall y \in \IM \}$ of $\phi$ in $\IM$ is equal to $\IM_{\hat{\sigma}^{\phi}}$, $\IM_{\hat{\sigma}^{\phi}} = \{ x \in \IM: \hat{\sigma}^{\phi}_t(x)=x \}$ is the $C^*$-subalgebra of invariant elements of the modular automorphisms $(\hat{\sigma}_t^{\phi})$. 

\vsp 
\NI (d) $\clm_{\sigma^{\phi}} = \pi_{\omega}(\IM_{\hat{\sigma}^{\phi}})''$; 

\vsp 
\NI (e) Let $\IZ_{\omega}$ be the centre of $\IM_{\hat{\sigma}^{\omega}}$ and $\clz_{\omega}$ be the centre of $\clm_{\sigma^{\omega}}$ then  

\vsp 
\NI (i) $\clm_{\sigma^{\omega}} = \pi_{\omega}(\IM_{\hat{\sigma}^{\omega}})''$; 

\vsp 
\NI (ii) $\clm_{\sigma^{\omega}}' \bigcap \pi_{\omega}(\IM)'' = \clz_{\omega};$ 

\vsp 
\NI (iii) $\IM_{\hat{\sigma}^{\omega}}'=\IZ_{\omega}$;

\end{pro} 

\vsp 
\begin{proof} 
For each fixed $t \in \IR$, $\pi_t:\IM \raro \clb(\clh_{\omega})$ is a unital representation defined by 
$$\pi_t(x) = \sigma^{\phi}_t(\pi_{\omega}(x))$$ 
with $\pi_t(\IM)''=\pi_{\omega}(\IM)''$. Thus by a Theorem of R.T. Powers (Theorem 3.7 in [Pow]), we get 
$$\pi_t(x)=\pi_{\omega}(\hat{\sigma}^{\phi}_t(x))$$ 
for some automorphism $\hat{\sigma}^{\phi}_t:\IM \raro \IM$. For each $t \in \IR$, $\hat{\sigma}^{\phi}_t$ is uniquely determined by $\sigma^{\phi}_t$ and the map $t \raro \hat{\sigma}^{\phi}_t$ is a group of automorphisms, are consequence of the faithful property of the representation $\pi$ ($\IM$ being a simple C$^*$ algebra, 
any non-degenerate representation of $\IM$ is faithful ). 

\vsp 
Statements (a),(b) and (c) are simple consequence of the general Tomita-Takesaki theory [BR1]. The statement (d) needs a careful argument and it is far from being obvious.  

\vsp 
For the time being we assume that $\phi$ is an integrable faithful normal weight with infinite mutiplicity. As in Lemma 2.3 in [CT], we set $\clm_{\phi,\lambda} = \{ X  \in \clm: \sigma^{\phi}_t(X) = \lambda^{it} X \}$ for 
$\lambda > 0$ and use Fourier inversion formula given in Lemma 2.3 (c) to conclude $\pi_{\omega}(\IM_{\hat{\sigma}^{\phi}})''$ is equal to $\clm_{\sigma^{\phi}}$ since $\pi_{\omega}(\IM)$ is weak$^*$ dense in $\clm$. This shows that the statement (d) is true for an integrable normal faithful weight with infinite mutiplicity.   

\vsp 
For the general case, we choose a suitable state $\omega'$ on $\IM$ in order to guarantee existence of an integrable normal weight $\psi$ on $\cln=\pi_{\omega'}(\IM)''$. We consider the integrable normal weight  $\phi \otimes \psi$ on $\clm \otimes \cln$ with infinite multiplicity. By the first part of present argument 
$(\clm \otimes \cln)_{\phi \otimes \psi}=\pi_{\omega \otimes \omega'}((\IM \otimes \IM)_{\hat{\sigma}^{\phi} \otimes \hat{\sigma}^{\psi}})''$. We use conditional expectation $\IE^{\phi \otimes \psi}_{\phi}: \clm \otimes \cln \raro \clm$ on left von-Neumann algebra preserving their respective normal weights and its normal property to get the required equality (d).   

\vsp 
For (i) of (e), we use (d). For (ii) of (e), it is enough to show for factor states $\omega$ of $\IM$ as we can use central decomposition of more general state $\omega$ into factor states to get the required equality. If $\omega$ is a type-I then modular group is inner and thus centre of $\clm_{\sigma^{\omega}}$ is equal to its commutant. For type-III, the result follows from Corollary 6.4. So we are left to prove the statement for type-II factor state $\omega$. The statement is true if $\omega$ is the unique normalised trace $\omega_0$ on $\IM$. However $\pi_{\omega}(\IM)''$ is unitarily equivalent to either $\pi_{\omega_0}(\IM)''$ or $\pi_{\omega_0}(\IM)'' \otimes F_{\infty}$ for a type-$I_{\infty}$ factor $F_{\infty}$. So (ii) 
holds as well for $\omega$ since modular group of $\omega$ is as well inner for semi-finite von-Neumann factors.   

\vsp 
The last statement (iii) of (e) follows trivially from (d) and (ii) of (e).

\end{proof}

\vsp 
A translation invariant state $\omega$ is called {\it classical } if $\omega = \omega \circ \IE^e_{\omega_0}$, where $\IE^e_{\omega_0}$ is the normalized trace $\omega_0$ preserving norm one projection from $\IM$ onto $\ID^e$ for some orthonormal basis $e=(e_i)$ of $\IC^d$. The set $\clc^e_{\theta}=\{\omega \in \IM^*_{+,1}: \omega = \omega \IE^e_{\omega_0} \}$ of classical translation invariant states form a compact convex subset of the compact convex set of translation invariant states. We aim now to prove any translation invariant state of $\IM$ is a classical state modulo an automorphism on $\IM$ commuting with $\theta$. 

\vsp 
\begin{pro} 
Let $\omega$ in Propostion 6.3 be also translation invariant. Then the following holds:

\vsp 
\NI (a) The von-Neumann sub-algebra $\clm_{\sigma^{\omega}}$ of $\clm$ is also $\theta$-invariant i.e.  
$\theta(\clm_{\sigma^{\omega}})=\clm_{\sigma^{\omega}}$ and $\pi_{\omega}(\IM_{\hat{\sigma}^{\omega}})''
=\clm_{\sigma^{\omega}}$; 

\vsp 
\NI (b) The $C^*$ sub-algebra $\IM_{\hat{\sigma}^{\phi}}$ of $\IM$ is also $\theta$-invariant i.e.  
$\theta(\IM_{\hat{\sigma}^{\omega}})=\IM_{\hat{\sigma}^{\omega}}$ and 
$$\theta (\hat{\sigma}^{\omega}_t(x)) = \hat{\sigma}^{\omega}_t (\theta(x))$$
for all $x \in \IM$; 

\vsp 
\NI (c) There exists a maximal abelian $C^*$-sub-algebra $\ID_{\omega}$ of $\IM$ such that 

\NI (i) $\IZ_{\omega} \subseteq \ID_{\omega} \subseteq \IM_{\hat{\sigma}^{\omega}}$;

\NI (ii) $\theta(\ID_{\omega})=\ID_{\omega}$;

\NI (iii) There exists an automorphism $\alpha$ on $\IM$ such that $\theta \alpha = \alpha \theta$ and 
$\alpha(\ID_{\omega})=\ID^e$;

\vsp 
\NI (d) Let $\omega$ be a translation invariant state and $0 < \lambda < 1$ then the centraliser $\IC_{\omega} = \{x \in \IM: \omega(xy)=\omega(yx),\forall y \in \IM \}$ of $\omega$ is equal to the centraliser $\IC_{\omega_{\lambda}} = \{a \in \IM: \omega_{\lambda}(xy)=\omega_{\lambda}(yx),\forall y \in \IM \}$ of $\omega_{\lambda} =\lambda \omega + (1-\lambda) \omega_0$, where $\omega_0$ is the unique normalized trace on $\IM$. Furthermore, there exists a maximal abelian $C^*$-sub-algebra $\ID_{\omega}$ of $\IM$ such that 

\vsp 
\NI (i) $\IZ_{\omega} \subseteq \ID_{\omega} \subseteq \IC_{\omega}$, where $\IZ_{\omega}$ is the centre of $\IC_{\omega}$;

\vsp 
\NI (ii) $\theta(\ID_{\omega})=\ID_{\omega}$;

\vsp 
\NI (iii) There exists an automorphism $\alpha$ on $\IM$ such that $\theta \alpha = \alpha \theta$ and 
$\alpha(\ID_{\omega})=\ID^e$;
\end{pro} 

\begin{proof} 
That $\theta \hat{\sigma}^{\omega}_t=\hat{\sigma}^{\omega}_t \theta$ also follows by the faithful property of $\pi_{\omega}$ and commuting property $\Theta_{\omega} \sigma^{\omega}_t = \sigma^{\omega}_t \Theta_{\omega}$ on $\clm_{\omega}$. So (a) and (b) are simple consequence of faithful property of the reprsentation $\pi_{\omega}$ of $\IM$.   

\vsp 
That $\clc_{\omega} = \clm_{\sigma^{\omega}}$ in (e) follows by a standard result in Tomita-Takasaki theory [BR1]. The invariance property of $\clm_{\sigma^{\omega}}$ under $\theta$ follows since $\omega$ is $\theta$ invariant and thus commutes with the modular automorphism $\sigma_t^{\omega}$ on $\clm$. That $\IC_{\omega}=\IM_{\hat{\sigma}^{\omega}}$ in (c) by (a) once we apply (b) with $a=\pi_{\omega}(x)$ and $b=\pi_{\omega}(y)$ to conclude $x \in \IC_{\omega}$ if and only if $a \in \clc_{\omega}$.

\vsp 
Since $\omega \theta = \omega$, $\Theta_{\omega} \sigma^{\omega}_t = \sigma^{\omega}_t \Theta_{\omega}$ by the uniquess of KMS relation for $\omega$ and so $\theta \hat{\sigma}^{\omega}_t = \hat{\sigma}^{\omega}_t \theta$ for all $t \in \IR$. In particular, $\theta(\IM_{\hat{\sigma}^{\omega}})=\IM_{\hat{\sigma}^{\omega}}$.

\vsp 
Let $\cld_{\omega}$ be the non-empty collection of abelian sub-algebras $D$ of $\IM$ such that $\theta(D)=D$ and $\IZ_{\hat{\sigma}^{\omega}} \subseteq D \subseteq \IM_{\hat{\sigma}^{\omega}}$. We give a partial ordering $D_1 \succeq D_2$ if $D_2 \subseteq D_1$. For a maximal ordered subcollection $D_{\alpha}$, closer of their union $\bigcup U_{\alpha}$ is an upper bound for the collection and thus by Zorn's lemma there exists a maximal element say $\ID^{\omega}$ in $\cld_{\omega}$.

\vsp 
Let $\omega_{\lambda}=\lambda \omega +(1-\lambda)\omega_0$ for $\lambda \in [0,1]$. Then $\IM_{\hat{\sigma}^{\omega_{\lambda}}}=\IM_{\hat{\sigma}^{\omega}}$ for $\lambda \in (0,1]$ and thus $\ID^{\omega}$ is a maximal element in $\cld_{\omega_{\lambda}}$ as well. We claim that $\ID_{\omega}$ is a maximal element in $\cld_{\omega_0}$ as well. For a proof, let $D$ be an element 
in $\cld_{\omega_0}$ such that $\ID^{\omega} \subseteq D$. All that we need to show $D \subseteq \IM_{\hat{\sigma}^{\omega}}$ and that follows as any element $x \in D$ commutes with $\ID^{\omega}$ and so atleast commutes with $\IZ_{\hat{\sigma}_{\omega}}$ but $\IZ'_{\hat{\sigma}^{\omega}}=\IM_{\hat{\sigma}^{\omega}}$ and thus $x \in \IM_{\hat{\sigma}^{\omega}}$. 

\vsp 
So for the general case, we need the following lemma.

\vsp 
\begin{lem} The following statements are true:

\NI (a) Let $\clc_{\hat{\theta}}$ be the collection of translation invariant abelian von-Neumann subalgera of $\pi_{\omega_0}(\IM)''$. Then an element $\cln \in \clc_{\hat{\theta}}$ is maximal
if and only if $\cln$ is a maximal abelian von-Neumann subalgebra of $\pi_{\omega_0}(\IM)''$ i.e. $\cln = \cln' \bigcap \pi_{\omega_0}(\IM)''$. 

\NI (b) Let $\cln$ be a maximal abelian von-Neumann sub-algebra of $\pi_{\omega_0}(\IM)''$ then there exsits a maximal abelian $C^*$-subalgebra $\ID_{\cln}$ of $\IM$ so that $\cln = \pi_{\omega_0}(\ID_{\cln})''$.

\vsp 
\NI (c) Let $\cld$ be the collection of all translation invariant abelian $C^*$-sub-algebras of $\IM$. Then an element 
$\ID \in \cld$ is maximal in $\cld$ if and only if $\ID$ is maximal abelian in $\IM$ i.e. $\ID' = \ID$.  
\end{lem}

\vsp 
\begin{proof} 

\vsp 
We write $tr_0:\IM_d \raro \IC$ for normalised tracial state of $\IM_d$ with respect to an othernormal basis $(e_i)$ for 
$\IC^d$ by 
$$tr_0(a) = \sum_i{ 1 \over n} <e_iae_i>$$ 
We consider the GNS space $(\clh_{tr_0},\pi_{tr_0},\zeta_{tr_0})$ of $(\IM_d,tr_0)$, i.e. $\clh_{tr_0}$ is isomorphic to 
$\IC^d \otimes \IC^d$ and $\pi_{tr_0}(a)=a \otimes I_d$ with Tomita's conjugation 
$\clj_{tr_0}(a)= I_d \otimes a^*$ for all $a \in \IM_d$. Thus $\pi_{tr_0}(\IM_d)$ and $\pi_{tr_0}(\IM_d)'$ are isomorphic 
to $\IM_d \otimes I_d$ and $I \otimes \IM_d$ respectively and the vector state $\zeta_{tr_0}$ is given by the unite vector $\zeta_{tr_0} = \sum_{i,j} {1 \over n} e_i \otimes e_j$. 

\vsp 
More generally, for a finite subset $\Lambda$ of $\IZ$, GNS space $(\clh_{\omega_{\Lambda}},\pi_{\omega_{0,\Lambda}},\zeta_{\omega_{0,\Lambda}}$ of $\IM_{\Lambda}$ associated with the state $\omega_{0,\Lambda}=\omega_0|\IM_{\Lambda}$ is given by 
$$(\otimes_{k \in \Lambda} \clh^{(k)}_{tr_0},\otimes_{k \in \Lambda} \pi^{(k)}_{tr_0},\otimes_{k \in \Lambda} \zeta^{(k)}_{tr_0})$$ 
with $\pi_{\omega_{0,\Lambda}}(x) = x \otimes I$ for all $x \in \IM_{\Lambda}$. There exists a pure translation invariant state $\zeta_{\omega_0}$ of $\IM \otimes \IM$ such that its marginals are the unique normalised tracial state $\omega_0$ of $\IM$.

\vsp 
We consider the GNS space $(\clh_{\omega_0},\pi_{\omega_0},\zeta_{\omega_0})$ of $(\IM,\omega_0)$, the cyclic representation determined uniquely upto isomorphism. The state $\omega_0$ being a factor, we have $\pi_{\omega_0}(\IM)'' \vee \pi_{\omega_0}(\IM)' = \clb(\clh_{\omega_0})$. Thus there exists a natural Hilbert space isomorphism between the GNS Hilbert space 
associated with the state $\omega_0$ of $\IM$ to GNS Hilbert space of the pure state $\zeta_{\omega_0}$ of $\IM \otimes \IM$  and the isomorphism takes $\pi_{\omega_0}(x)$ to $x \otimes I$ and $\clj_{\omega_0}(\pi_{\omega_0}(x))$ to $I \otimes x^*$ respectively.  

\vsp 
We set a notation $\hat{\theta}$ for the cannonical translation action on $\clb(\clh_{\omega_0})$ defined by $\hat{\theta}(X)=U_{\theta}XU_{\theta}^*$, where $U_{\theta}$ is the unitary operator defined by extending the action $U_{\theta} \pi_{\omega_0}(x) \zeta_{\omega} = \pi_{\omega_0}(\theta(x))\zeta_{\zeta}$ for all $x \in \IM$. 

\vsp 
\NI (a) Let $\cln$ be a $\theta$ invariant abelian von-Neumann subalgebra of $\pi_{\omega_0}(\IM)''$ and $\cln_{ma}$ be a
maximal abelian von-Neumann subalgebra of $\pi_{\omega}(\IM)''$ that contains 
$\cln$. So it satisfies
$$\cln \subseteq \bigcap_{n \in \IZ} \theta^n(\cln_{ma})''  \subseteq \cln_{ma} = \cln_{ma}' \bigcap \pi_{\omega_0}(\IM)''$$
$$ \subseteq \cln_{ma}' \bigcap \vee_{n \in \IZ} \theta^n(\clm_{ma})'' \subseteq \cln' \bigcap \pi_{\omega_0}(\IM)'' \subseteq \cln'$$

\vsp 
Then the following holds: 

\vsp 
\NI (i) $\cln$ is maximal in $\clc_{\hat{{\theta}}}$ if and only if $\cln=\bigcap_{n \in \IZ} \theta^n(\cln_{ma})$ for any maximal abelian von-Neumann subalgebra $\cln_{ma}$ of $\pi_{\omega_0}(\IM)''$ that contains $\cln$; 

\vsp 
\NI (ii) If $\cln$ is maximal in $\clc_{\hat{\theta}}$ then $\cln \vee \clj_{\omega_0}(\cln)$ is maximal in 
$\{A \subseteq \clb(\clh_{\omega}) : A \subseteq A',\;\hat{\theta}(A)=A \}$; 

\vsp 
A simple proof follows once we identify $\cln \vee \clj_{\omega_0}(\cln)$ as tensor product of two abelian von-Neumann algebras
in $\pi_{\zeta_{\omega_0}}(\IM \otimes \IM)'' = \clb(\clh_{\omega_0})$.    

\vsp 
\NI (iii) If $\cln_{ma}$ is a maximal abelian von-Neumann subalgebra of $\pi_{\omega_0}(\IM)''$ then 
$\cln_{ma} \vee \clj_{\omega_0}(\cln_{ma})$ is maximal abelian in $\clb(\clh_{\omega_0})$; 

\vsp 
The abelian von-Neumann algebra $\cln_{ma} \vee \clj_{\omega_0}(\cln_{ma})$ is isomorphic to tensor product of two abelian von-Neumann algebras in $\pi_{\omega_0}(\IM \otimes \IM)''=\clb(\clh_{\omega_0})$ and thus its maximal abelian property follows from maximal abelian property of its marginals.  

\vsp 
\NI (iv) The projection $P_{\omega_0} = [\cln \vee \clj_{\omega_0}(\cln) \zeta_{\omega_0}]$ is $\hat{\theta}$ invariant and so by maximal property of $\cln \vee \clj_{\omega_0}(\cln)$ in $\clb(\clh_{\omega_0})$, we get $P_{\zeta} \in \cln \vee 
\clj_{\omega_0}(\cln)$. 

\vsp 
Let $\cln_{ma}$ be a maximal abelian-subalgebra of $\pi_{\omega_0}(\IM)''$ containing $\cln$. Then 
$\cln_{ma} \vee \clj_{\omega_0}(\cln_{ma})$ also contains $\cln \vee \clj_{\omega_0}(\cln)$ and so $P_{\zeta_{\omega_0}}$ is 
also an element in $\cln_{ma} \vee \clj_{\omega_0}(\cln_{ma})$ and $P_{\zeta_{\omega_0}}\zeta_{\omega_0} = \zeta_{\omega_0}$. 
So in particular, $P_{\zeta_{\omega_0}}=[\cln_{ma} \vee \clj_{\omega_0}(\cln_{ma})\zeta_{\omega_0}]$ i.e. 

\vsp 
\NI (v) $[\cln \zeta_{\omega_0}] = [\cln_{ma}\zeta_{\omega_0}]$; 

\vsp 
\NI (vi) $\cln = \cln_{ma}$; 

\vsp 
It follows by separating property of $\zeta_{\omega_0}$ for $\pi_{\omega_0}(\IM)''$ as follows: By (v) for a given element $x \in \clm_{ma}$ and $\epsilon > 0$, there exists a $y \in \cln$ such that $||(x-y)\zeta_{\omega_0}|| \le \epsilon$. Thus $||(x-y)z'\zeta_{\omega_0}|| \le \epsilon ||z'||$ for all $z' \in \pi_{\omega_0}(\IM)'$. This shows $x$ is an element in strong limit of elements in $\cln$. Since $\cln$ is closed in strong operator topology, we get $x \in \cln$. 

\vsp 
For (b) we use uniqueness of hyperfinite type-$II_1$ factor and identification 
$\pi_{\omega_0}(\IM)''$ with $\pi_{\omega_0}(\IM)'' \otimes \IM_d$ for existence of
an automorphism $\alpha$ on $\IM$ so that $\cln=\pi_{\omega_0}(\alpha(\ID^e)''$ by a theorem of R.T. Powers [Pow]. 
 
\vsp 
The last statement is now obvious from (a) and (b). 

\end{proof}

\vsp 
The statement (d) in Proposition 6.6 is a simple application of Proposition 5.3.28 in [BR-II] valid for $\sigma^{\omega}$-KMS state of $\IM$ since $\IC_{\omega}=\IM_{\hat{\sigma}^{\omega_{\lambda}}}$ for all $0 < \lambda < 1$, independent of $\lambda \in (0,1)$.
\end{proof} 

\vsp 
\begin{pro} 
Let $\omega$ be a translation invariant state of $\IM$. Then there exists an automorphism $\alpha$ on $\IM$ such that 

\NI (a) $\alpha \theta = \theta \alpha;$

\NI (b) $\omega \alpha = \omega \alpha \IE^e_{\omega_0}.$
\end{pro}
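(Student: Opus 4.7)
The plan is to apply Proposition 5.3(f) to $\omega$ in order to produce a maximal abelian $C^*$ sub-algebra $\ID_{\omega}$ of $\IM$ with $\ID_{\omega} \subseteq \IC_{\omega}$ and $\theta(\ID_{\omega})=\ID_{\omega}$, together with an automorphism $\alpha_0$ on $\IM$ satisfying $\alpha_0 \theta = \theta \alpha_0$ and $\alpha_0(\ID_{\omega})=\ID^e$. The desired automorphism will be its inverse $\alpha := \alpha_0^{-1}$: this still commutes with $\theta$, sends $\ID^e$ back to $\ID_{\omega}$, and hence already settles (a). Note that $\omega$ itself need not be faithful, but Proposition 5.3(f) was designed precisely to handle this by passing to the faithful perturbation $\omega_{\lambda}=\lambda \omega+(1-\lambda)\omega_0$ and invoking the identity $\IC_{\omega}=\IC_{\omega_{\lambda}}$ valid for every $0<\lambda<1$.

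Next I would check that $\ID^e$ sits inside the centraliser of the state $\omega':=\omega \alpha$. For $d \in \ID^e$ and $x \in \IM$, the element $\alpha(d)=\alpha_0^{-1}(d)$ lies in $\ID_{\omega} \subseteq \IC_{\omega}$, so
$$\omega'(dx)=\omega(\alpha(d)\alpha(x))=\omega(\alpha(x)\alpha(d))=\omega'(xd),$$
giving $\ID^e \subseteq \IC_{\omega'}$.

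The remaining task is to deduce $\omega'=\omega'\IE^e_{\omega_0}$ from this centraliser inclusion, and I would argue locally. From the tensor product description $\IE^e_{\omega_0}=\otimes_n \IE^e_0$ recalled in Section 4, its restriction to any $\IM_{\Lambda}$ with finite $\Lambda \subset \IZ$ coincides with the normalised-trace-preserving norm one projection from $\IM_{\Lambda}$ onto $\ID^e_{\Lambda}$. The density matrix $\rho_{\Lambda}$ representing $\omega'|_{\IM_{\Lambda}}$ must commute with every minimal diagonal projection in $\IM_{\Lambda}$ because $\ID^e \subseteq \IC_{\omega'}$ trivially restricts to $\ID^e_{\Lambda} \subseteq \IC_{\omega'|_{\IM_{\Lambda}}}$; hence $\rho_{\Lambda}$ is diagonal in the product basis associated with $e=(e_i)$, and a direct trace computation yields $\omega'(x)=\omega'(\IE^e_{\omega_0}(x))$ for all $x \in \IM_{\Lambda}$. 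Norm density of $\IM_{loc}$ in $\IM$ together with the continuity of both functionals then extends the identity to the whole of $\IM$, completing (b).

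The hard part will be the invocation of Proposition 5.3(f), whose construction of a translation-invariant maximal abelian sub-algebra of the centraliser modulo an automorphism is the genuine input; beyond that, the only point demanding care is ensuring that both the centraliser inclusion and the shape of $\IE^e_{\omega_0}$ restrict correctly to each $\IM_{\Lambda}$. Both items follow from the tensor decomposition of $\omega_0$ and of $\ID^e$ already used repeatedly in the paper, so once the local identity $\omega'|_{\IM_{\Lambda}}=\omega'|_{\IM_{\Lambda}} \circ \IE^e_{\omega_0}|_{\IM_{\Lambda}}$ is in hand the passage to $\IM$ is routine.
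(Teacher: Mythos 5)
Your argument is correct, and it reaches part (b) by a genuinely different route from the paper, while sharing the same essential input. Both proofs rest on Proposition 5.3(f), which supplies the translation invariant maximal abelian subalgebra $\ID_{\omega}\subseteq \IC_{\omega}$ and the $\theta$-commuting automorphism carrying it onto $\ID^e$; your choice $\alpha=\alpha_0^{-1}$ and the resulting inclusion $\ID^e\subseteq \IC_{\omega\alpha}$ are exactly the right bookkeeping, and the centraliser computation $\omega\alpha(dx)=\omega(\alpha(d)\alpha(x))=\omega(\alpha(x)\alpha(d))=\omega\alpha(xd)$ is valid because $\alpha(\ID^e)=\ID_{\omega}\subseteq\IC_{\omega}$. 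Where you diverge is in converting this into $\omega\alpha=\omega\alpha\,\IE^e_{\omega_0}$: the paper does not argue locally but first records that $\omega\alpha$ is preserved by some norm one projection $\IE^e$ onto $\ID^e$ and then passes through the entropy results, namely Proposition 5.2 (giving $s(\omega\alpha)=h_{KS}(\ID^e,\theta,\omega\alpha)$ in the presence of such a projection) followed by the ``if'' direction of Proposition 5.1 (which upgrades the entropy equality to invariance under the trace-preserving projection $\IE^e_{\omega_0}$). You instead note that $\ID^e\subseteq\IC_{\omega\alpha}$ forces the density matrix of $\omega\alpha$ on each finite $\IM_{\Lambda}$ to commute with the maximal abelian algebra $\ID^e_{\Lambda}$, hence to be diagonal, so that $\omega\alpha=\omega\alpha\,\IE^e_{\omega_0}$ holds on each $\IM_{\Lambda}$ (since $\IE^e_{\omega_0}$ restricts to the local trace-preserving conditional expectation onto $\ID^e_{\Lambda}$) and then on all of $\IM$ by norm density of $\IM_{loc}$. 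Your route is more elementary: it bypasses the mean entropy versus Kolmogorov--Sinai machinery of Propositions 5.1--5.2 and avoids having to manufacture a state-preserving norm one projection (a Takesaki-type step that is delicate when $\omega$ is not faithful), at the cost of a small finite-dimensional computation; the paper's route buys brevity by reusing results it has already established and keeps the argument aligned with the entropy theme of the section.
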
 

\begin{proof} 
By Proposition 6.6 (d), we have an automorphism $\alpha$ on $\IM$ such that 
$\alpha \theta = \theta \alpha$ and $\omega \alpha = \omega \alpha \IE^e$, where $\IE^e$ 
is a norm one projection from $\IM$ onto $\ID^e$. Thus by Proposition 6.1 and 
Proposition 6.2, we also have 
$\omega \alpha = \omega \alpha \IE^e_{\omega_0}$.  
\end{proof}   

\vsp 
\begin{thm} 
Let $\omega$ be a translation invariant pure state of $\IM$. Then $(\IM,\theta,\omega)$ is isomorphic to $(\IM,\theta,\omega_{\lambda})$, where $\omega_{\lambda}$ is an infinite tensor product pure state of $\IM$. 
\end{thm}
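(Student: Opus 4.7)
The plan is to transport $\omega$ by a dynamical automorphism onto a ``classical'' translation invariant state in the sense of Section~5, and then use purity together with translation invariance to force that state to be an infinite tensor product of a single rank-one projection.

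First, I would apply Proposition~5.4 to produce an automorphism $\alpha:\IM\raro\IM$ commuting with $\theta$ such that $\omega':=\omega\alpha$ satisfies $\omega'=\omega'\IE^e_{\omega_0}$ for some orthonormal basis $e=(e_i)$ of $\IC^d$. Since $\alpha$ itself witnesses an isomorphism $(\IM,\theta,\omega')\cong(\IM,\theta,\omega)$ (with $\omega\alpha=\omega'$ and $\theta\alpha=\alpha\theta$), it suffices to identify $\omega'$ with an infinite tensor product pure state. The state $\omega'$ remains translation invariant because $\alpha\theta=\theta\alpha$, and it remains pure because purity is preserved under any $C^*$-automorphism.

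Second, I would examine the restriction $\nu:=\omega'|_{\ID^e}$. Since $\omega'=\nu\circ\IE^e_{\omega_0}$, any nontrivial convex decomposition $\nu=\lmd\nu_1+(1-\lmd)\nu_2$ on $\ID^e$ would lift through $\IE^e_{\omega_0}$ to a nontrivial decomposition $\omega'=\lmd(\nu_1\circ\IE^e_{\omega_0})+(1-\lmd)(\nu_2\circ\IE^e_{\omega_0})$ of $\omega'$ on $\IM$, contradicting purity of $\omega'$. Hence $\nu$ is a pure state on the commutative algebra $\ID^e\equiv C(\Omega^{\IZ})$, i.e.\ a Dirac measure $\delta_{\underline{i}}$ at some $\underline{i}=(i_n)_{n\in\IZ}\in\Omega^{\IZ}$. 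Translation invariance of $\nu$ forces $\theta_*\underline{i}=\underline{i}$, so $\underline{i}=(i,i,i,\ldots)$ is a constant sequence for some $i\in\{1,\ldots,d\}$.

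Third, I would unpack $\omega'=\nu\circ\IE^e_{\omega_0}$ on elementary tensors. Writing $\IE^e_{\omega_0}=\otimes_{n\in\IZ}\IE^e_0$ with $\IE^e_0(y)=\sum_j\langle e_j,ye_j\rangle\,|e_j\rangle\langle e_j|$, one obtains for $x=\otimes_{n\in\IZ}x_n\in\IM_{\mathrm{loc}}$
\[
\omega'(x)=\nu(\IE^e_{\omega_0}(x))=\prod_{n\in\IZ}\langle e_i,x_ne_i\rangle,
\]
which is precisely the infinite tensor product pure state $\omega_\lmd:=\otimes_{n\in\IZ}|e_i\rangle\langle e_i|^{(n)}$ on $\IM$. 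Norm density of $\IM_{\mathrm{loc}}$ in $\IM$ then gives $\omega'=\omega_\lmd$, and combined with the first step this yields the required isomorphism.

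The genuinely hard work has already been absorbed into Proposition~5.4, whose proof handles the non-faithfulness of a pure $\omega$ by passing to the faithful perturbation $\omega_\mu=\mu\omega+(1-\mu)\omega_0$, exploiting $\IC_\omega=\IC_{\omega_\mu}$ from Proposition~5.3(f), and invoking Theorem~3.6 to realise the resulting maximal abelian $\theta$-invariant subalgebra $\ID_\omega$ as $\ID^e$. Given that input, the only step genuinely local to this theorem is the purity-to-Dirac reduction in the second step, which uses nothing beyond the bimodule property of $\IE^e_{\omega_0}$; this is therefore not a real obstacle, and the theorem follows cleanly from the machinery already assembled.
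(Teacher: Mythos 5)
Your proposal is correct and follows essentially the same route as the paper: invoke Proposition~5.4 to replace $\omega$ by $\omega\alpha=\omega\alpha\,\IE^e_{\omega_0}$, show purity of the restriction to $\ID^e$ by lifting any convex decomposition through $\IE^e_{\omega_0}$, and use translation invariance of the resulting Dirac measure on $\Omega^{\IZ}$ to land on the constant sequence, giving $\omega\alpha=\omega_{e_i}=\otimes_{n\in\IZ}|e_i\rangle\langle e_i|^{(n)}$. Your explicit final computation on elementary tensors merely spells out what the paper leaves implicit, so there is no substantive difference.
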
 

\vsp 
\begin{proof} By Proposition 6.6 (d), we have $\omega \alpha = \omega \alpha \IE^e_{\omega_0}$ on $\IM$ for some automorphism $\alpha$ on $\IM$. Thus the state $\omega \alpha$ is also pure. We claim that its restriction to $\ID^e$ is also pure. If $\omega \alpha = \mu \omega_1 + (1-\mu)\omega_2$ on $\ID^e$ for some $\mu \in (0,1)$, then we have 
$$\omega = \omega \alpha \IE^e_{\omega_0} $$
$$= \mu \omega_1 \IE^e_{\omega_0} + (1-\mu)\omega_2 \IE^e_{\omega_0}$$ 
on $\IM$. Thus by the purity of $\omega$,  
$\omega_1 \IE^e_{\omega_0} = \omega_2 \IE^e_{\omega_0} = \omega \alpha$ on $\IM$. 
Thus $\omega_1 = \omega_2 = \omega \alpha$ on $\ID^e$. 

\vsp 
The state $\omega \alpha$ being pure on $\ID^e$, the measure is supported onto a singleton point in $\Omega^{\IZ}$, where $\ID^e \equiv C(\Omega^{\IZ})$. Furthermore, the state being translation invariant, the support point is fixed by the right shift action $\theta$ on $\Omega^{\IZ}$. Thus $\omega \alpha = \omega_{e_i}$ for some $1 \le i \le d$, where $\omega_{e_i}=\otimes_{n \in \IZ} \omega^{(n)}_{e_i}$ is the infinite tensor product state with $\omega^{(n)}_{e_i}=\omega^{(n+1)}_{e_i}=\rho_{e_i}$ and  
$\rho_{e_i}$ is the pure vector state on $\!M_d(\IC)$ given by  
the vector $e_i \in \IC^d$ of norm one.    
\end{proof} 

\vsp 
\begin{cor} 
Any translation invariant pure state of $\IM$ admits Kolmogorov property. 
\end{cor}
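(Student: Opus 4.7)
The plan is to reduce the corollary directly to Theorem 5.5 together with the fact, recorded just after equation (9), that every infinite tensor product state of $\IM$ is Kolmogorov. Given a translation invariant pure state $\omega$ of $\IM$, Theorem 5.5 produces an automorphism $\alpha$ on $\IM$ commuting with $\theta$ such that $\omega \alpha = \omega_{e_i}$ for some $1 \le i \le d$, where $\omega_{e_i} = \otimes_{n \in \IZ} \rho_{e_i}^{(n)}$ is the infinite tensor product pure state associated with the unit vector $e_i \in \IC^d$. Thus the dynamics $(\IM,\theta,\omega)$ is isomorphic to $(\IM,\theta,\omega_{e_i})$.

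The second step is to verify Kolmogorov property for $\omega_{e_i}$ with respect to $\IM_L$. Taking $\IM_0 = \IM_L$, conditions (a)--(c) of the Kolmogorov property are immediate from the structure of $\IM$ as the two-sided infinite tensor product. For property (d), I would exploit the product structure: for the GNS vector $\zeta_{\omega_{e_i}}$ one has the identity
\be
F_{n]} \pi_{\omega_{e_i}}(x) F_{n]} = \omega_{e_i}(x) F_{n]},\qquad x \in \IM_{[n+1,\infty)},
\ee
which is equation (9) in the introduction. Applying this to arbitrary local $x$ and letting $n \raro -\infty$, the sequence of projections $F_{n]}$ collapses the expectation of every local element to its scalar mean, which forces $F_{-\infty]} = |\zeta_{\omega_{e_i}}\rangle\langle \zeta_{\omega_{e_i}}|$ after a routine density argument on the dense local subalgebra.

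Finally, I would invoke the remark made in the introduction that Kolmogorov property (in the sense adopted after Proposition 1.1, i.e. up to isomorphism) is an invariant of the triple $(\IM,\theta,\omega)$. Since $(\IM,\theta,\omega)$ is isomorphic to $(\IM,\theta,\omega_{e_i})$ via $\alpha$ and the latter is Kolmogorov by the preceding step, the original $\omega$ admits Kolmogorov property as well. There is no real obstacle here beyond checking that the isomorphism class notion of Kolmogorov property used in the corollary is precisely the one the paper has adopted after Proposition 1.1; once that bookkeeping is in place, both steps are immediate consequences of results already proved in the paper.
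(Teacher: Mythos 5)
Your proposal is correct and follows essentially the same route as the paper: reduce via Theorem 5.5 to an infinite tensor product pure state, note that such a state is Kolmogorov by the identity (9), and conclude by the invariance of the Kolmogorov property under isomorphism of the translation dynamics. The only difference is that you spell out the collapse $F_{-\infty]} = |\zeta_{\omega_{e_i}}\rangle\langle \zeta_{\omega_{e_i}}|$ explicitly, which the paper takes as already established in the introduction.
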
 

\vsp 
\begin{proof}
Since both pure and Kolmogorov properties are invariant for the translation dynamics, the statement is a simple consequence of Theorem 6.9 as the pure state $\omega_{\lambda}$ admits Kolmogorov property. 
\end{proof} 

\vsp 
\begin{rem} 
The unique ground state $\omega$ for fero-magnetic $XY$-model being translation invariant pure state, it admits Kolmogorov property but it admits Ren\'ee-Schroeder property $[\pi_{\omega}(\IM_R)''\zeta_{\omega}]=[\pi_{\omega}(\IM)''\zeta_{\omega}]$. In other words a translation invariant pure state need not be Kolmogorov in the sense defined earlier in [Mo1,Mo2], which is not an invariance. 
\end{rem} 

\vsp 
Now we introduce a formal definition for mean entropy of a translation invariant state $\omega$ of $\IM$. Let $\clm_0$ be collection of $C^*$-subalgebra $\IM_0$ of $\IM$ satisfying relations given in Proposition 1.1. By Proposition 1.1, $\IM_{0,n} = \IM'_0 \bigcap \theta^n(\IM_0)$ is isomorphic to $\otimes_{1 \le k \le n} \IM_d^{(k)}(\IC)$, we set 
$$s(\omega,\IM_0)=\mbox{lim}_{n \raro \infty}{1 \over n}S(\omega|\IM_{0,n})$$
and
$$s(\omega)=\mbox{inf}_{\IM_0 \in \clm_0}s(\omega,\IM_0)$$
So $s(\omega)$ is an invariance for translation invariant state $\omega$ and the 
$\omega \raro s(\omega)$ is upper-semicontinuous.  

\vsp 
\begin{thm} 
Mean entropy $s(\omega)$ is an invariant for the dynamics $(\IM,\theta,\omega)$ and is equal to $h_{CS}(\IM,\theta,\omega)$. The map $\omega \raro h_{CS}(\IM,\theta,\omega)$ is upper-semicontinuous. 
\end{thm}

\vsp 
\begin{proof} We assume for the time being the state $\omega$ is faithful. It is known [NS] that $h_{CS}(\IM,\theta,\omega) \le s(\omega,\IM_0)$ for any $\IM_0 \in \clm_0$ and so $h_{CS}(\IM,\theta,\omega) \le s(\omega)$. We will show $h_{CS}(\IM,\theta,\omega) \ge s(\omega)$.

\vsp 
We verify now the following steps:
$$h_{CS}(\IM,\theta,\omega)$$
$$=h_{CS}(\IM,\theta,\omega \alpha)$$
$$=h_{KS}(\ID^e,\theta,\omega \alpha)$$
$$=h_{KS}(\alpha(\ID^e),\theta,\omega)$$
$$=\mbox{lim}_{n \raro \infty} {1 \over n} S(\omega|\zeta_n)$$
where $\zeta_n = \zeta_0 \vee \theta(\zeta_0) \vee ..\vee \theta^{n-1}(\zeta_0)$ and 
$\zeta_0$ is a commutative algebra generated by a finite family of projections in 
$\pi_{\omega}(\alpha(\ID^e))''$ that gives a strong generator [Pa] for the dynamics $(\pi_{\omega}(\alpha(\ID^e))'',\theta,\omega_0)$ i.e. $\pi_{\omega}(\vee_{n \in \IZ} \theta^n(\zeta_0))'' = \pi_{\omega}(\alpha(\ID^e))''$. 
One such trivial choice is given by $\zeta_0=\alpha(\ID^e_{1})$ and $S(\omega|\zeta_n) = S(\omega|\alpha(\IM_{0,n})$ and thus $h_{CS}(\IM,\theta,\omega)=s(\omega,\IM_0) \ge s(\omega)$, where $\IM_0=\alpha(\IM_{(-\infty,0]})$.  

\vsp 
Now we deal with a translation invariant state $\omega$ of $\IM$ which need not be faithful. We 
will use a limiting argument that we have used in Proposition 6.6. For $\lambda \in (0,1)$, we set faithful states $\omega_{\lambda}$ defined by 
$$\omega_{\lambda}= \lambda \omega + (1-\lambda)\omega_0,$$ 
where we recall $\omega_0$ is the unique normalised trace on $\IM$. 

\vsp 
The state $\omega_{\lambda}$ being the $\sigma^{\omega_{\lambda}}$-KMS state on $\IM$, the centralizer $\clc_{\omega_{\lambda}} = \{x \in \IM: \omega_{\lambda}(xy)=\omega_{\lambda}(yx), \forall y \in \IM \}$ is equal to $\IM_{\sigma^{\omega_{\lambda}}}$. So by the tracial property of $\omega_0$, $\IM_{\sigma^{\omega_{\lambda}}}$ is independent of $\lambda \in (0,1)$. We define by  
$\IM_{\sigma^{\omega}}=\IM_{\sigma^{\omega}_{\lambda}}$. 
Note that this modified definition is consistent with definition for a faithful $\omega$.  

\vsp 
Since we still have no proof for affine property of the map $\omega \raro h_{CS}(\IM,\theta,\omega)$, we will use a direct argument. By Proposition 6.6, there exists an automorphism $\alpha$ on $\IM$ commuting with $\theta$ on $\IM$ satisfying 
$$\omega_{\lambda} \alpha = \omega_{\lambda} \alpha \IE^e_{\omega_0}$$
Since $\omega_0 \IE^e_{\omega_0} = \omega_0$ and $\omega_0 \alpha = \omega_0$, we conclude that $\omega \alpha = \omega \alpha \IE^e_{\omega_0}$. Now we apply Proposition 6.6 to complete the proof by adapting the argument used for faithful state $\omega$ by replacing the role $\IE_{\omega}$ by $\alpha \IE_{\omega_0}$, which is norm one projection from $\pi_{\omega}(\IM)''$ onto $\pi_{\omega}(\alpha(\ID^e))''$. 

\end{proof} 

\vsp 
In the proof for Theorem 6.12, partition $\zeta_0$ for $\alpha(\ID^e)$ need not be an element in $\IM_{loc}$ unless $\alpha$ takes $\IM_{loc}$ to itself. Furthermore, Lemma 3.2 suggest that even there may not exists a partition $\ul{\zeta}=(\zeta_i)$ made of projections in 
$\IM_{loc}$ such that $\ul{\zeta}$ is a strong generator for $(\alpha(\ID^e),\theta,\omega)$ unless $\alpha$ preserves local algebra. 

\vsp 
Question that arises now, can we approximate a given partition of $\alpha(\ID^e)$ made of projections by partitions made of possible positive elements in the local algebra $\IM_{loc}$ in the weak$^*$-topology of $\pi_{\omega}(\IM)''$? 

\vsp 
Since $\pi_{\omega}(\IM_{loc})''=\pi_{\omega}(\IM)''$, for a given $\epsilon > 0$, there exists a $\delta > 0$ and finite subset $\Lambda_{\epsilon}$ of $\IZ$ and channel $\zeta_{\delta}:\IM_{\Lambda} \raro \IM$ such that $||\zeta_{\delta} - \zeta||_{\omega} = \mbox{sup}_{x \in \IM_{\Lambda_{\delta}}: ||x|| \le 1 } |\omega(\zeta_{\delta}(x)-\zeta(x)| \le < \delta$, where $\zeta(x)=\sum_{1 \le i \le d} \alpha(e_i) x \alpha(e_i)$ and 
$$|H_{\omega}(\zeta,\theta(\zeta),..,\theta^n(\zeta))-H_{\omega}(\zeta_{\delta},\theta(\zeta_{\delta}),..\theta^n(\zeta_{\delta})| \le (n+1)\epsilon$$
by Proposition 3.1.11 in [NS]. 

\vsp 
Since 
$$H_{\omega}(\zeta_{\delta},\theta(\zeta_{\delta}),..\theta^n(\zeta_{\delta}) \le H_{\omega}(\IM_{\Lambda},\theta(\IM_{\Lambda})),..,\theta^n(\IM_{\Lambda})) $$
$$\le n+1 H_{\omega}(\IM_{\bigcup_{1 \le k \le n} \Lambda + k})$$
we conclude that

\vsp 
\NI (a) $h_{\omega}(\zeta,\theta)=s(\omega,\IM_0);$

\vsp 
\NI (b) $h_{\omega}(\zeta,\theta) \le h_{\omega}(\zeta_{\delta},\theta) + \epsilon;$

\vsp 
\NI (c) $h_{\omega}(\zeta_{\delta},\theta) \le s_{\omega}(\IM_R,\omega).$

\vsp 
This shows that the mean entropy $s(\omega)=s(\omega,\IM_0)$ for any $\IM_0 \in \clm_0$ and so we arrive at the following theorem. 

\vsp 
\begin{thm} 
Let $\omega$ be a translation invariant state of $\IM$ then mean entropy $s(\omega,\IM_{loc}) = \mbox{lim}_{|\Lambda_n| \raro \infty} {1 \over |\Lambda_n|}S(\omega|\IM_{\Lambda_n})$ is an invariance equal to Connes-St \o rmer dynamical entropy $h_{CS}(\IM,\theta)$.  
\end{thm}

\vsp 
\begin{thm} 
Let $\omega$ and $\omega'$ be two faithful translation invariant state of $\IM$. Then $(\IM,\theta,\omega)$ and $(\IM,\theta,\omega')$ are isomorphic if and only if $(\IM_{\hat{\sigma}^{\omega}},\theta,\omega)$ and $(\IM_{\hat{\sigma}^{\omega'}},\theta,\omega')$ are isomorphic. 
\end{thm}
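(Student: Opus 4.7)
The forward direction is a direct application of the Takesaki uniqueness of the modular group. Suppose $\beta:\IM\to\IM$ is a $\theta$-equivariant automorphism with $\omega'\beta=\omega$. Its GNS lift $\hat{\beta}$ intertwines the cyclic and separating vectors for $\omega,\omega'$ and the actions on $\pi_\omega(\IM)''$ and $\pi_{\omega'}(\IM)''$, so by the KMS characterisation it intertwines the modular automorphism groups. Pulling this back to the $C^*$-level via Proposition 5.3(a) gives $\beta\,\hat{\sigma}^{\omega}_t=\hat{\sigma}^{\omega'}_t\,\beta$ on $\IM$ for every $t\in\IR$; in particular $\beta(\IM_{\hat{\sigma}^{\omega}})=\IM_{\hat{\sigma}^{\omega'}}$, and $\beta$ restricts to the required isomorphism of centralizer dynamics.

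For the reverse direction, let $\gamma:\IM_{\hat{\sigma}^{\omega}}\to\IM_{\hat{\sigma}^{\omega'}}$ be a $\theta$-equivariant $*$-isomorphism with $\omega'\gamma=\omega$. \emph{Step 1 (reduction to classical states):} by Proposition 5.4 choose $\theta$-equivariant automorphisms $\alpha,\alpha'$ of $\IM$ with $\omega\alpha=\omega\alpha\,\IE^e_{\omega_0}$ and $\omega'\alpha'=\omega'\alpha'\,\IE^{e'}_{\omega_0}$. Replacing $\omega,\omega'$ by $\omega\alpha,\omega'\alpha'$ and $\gamma$ by $(\alpha')^{-1}\gamma\alpha$, we may assume $\omega$ and $\omega'$ are classical with respect to some orthonormal bases $e,e'$ of $\IC^d$. \emph{Step 2 (common diagonal):} for a classical $\omega$, the bi-module property of $\IE^e_{\omega_0}$ and commutativity of $\ID^e$ give $\omega(xu)=\omega(ux)$ for all $u\in\ID^e$, $x\in\IM$; hence $\ID^e\subseteq\IC_\omega=\IM_{\hat{\sigma}^{\omega}}$ by Proposition 5.3(e), and $\ID^e$ is $\theta$-invariant and maximal abelian in $\IM$, realising the algebra $\ID_\omega$ of Proposition 5.3(d). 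The same holds for $\ID^{e'}$ and $\omega'$. The image $\gamma(\ID^e)$ is $\theta$-invariant and maximal abelian in $\IM_{\hat{\sigma}^{\omega'}}$; moreover, using $\gamma(\IZ_\omega)=\IZ_{\omega'}$ and the tensor product decomposition of Lemma 3.5, the argument of Proposition 5.3(d) applies verbatim to show $\gamma(\ID^e)$ is maximal abelian in all of $\IM$. Proposition 5.3(d)(iii) together with unitary-conjugation automorphisms of $\IM$ commuting with $\theta$ then allows us to identify $\gamma(\ID^e)$ and $\ID^{e'}$ each with a common copy of $\ID^e$; modifying $\gamma$ by these, we reduce to $\gamma(\ID^e)=\ID^e=\ID^{e'}$ while keeping $\omega'\gamma=\omega$.

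\emph{Step 3 (extension and verification):} now $\beta_0:=\gamma|_{\ID^e}$ is a $\theta$-equivariant automorphism of $\ID^e$ with $\omega'\beta_0=\omega$ on $\ID^e$. By Theorem 3.6 it extends to a $\theta$-equivariant automorphism $\beta$ of $\IM$. Since $\beta(\ID^e)=\ID^e$ and $\omega_0\beta=\omega_0$, uniqueness of the trace-preserving norm-one projection onto $\ID^e$ forces $\beta\,\IE^e_{\omega_0}=\IE^e_{\omega_0}\,\beta$, so for every $x\in\IM$,
\[
\omega'(\beta(x))=\omega'(\IE^e_{\omega_0}(\beta(x)))=\omega'(\beta(\IE^e_{\omega_0}(x)))=\omega(\IE^e_{\omega_0}(x))=\omega(x),
\]
using classicality of $\omega'$, the intertwining $\beta\,\IE^e_{\omega_0}=\IE^e_{\omega_0}\,\beta$, the equality $\omega'\beta_0=\omega$ on $\ID^e$, and classicality of $\omega$, respectively.

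The main obstacle is the second half of Step 2: verifying that $\gamma(\ID^e)$ is maximal abelian in all of $\IM$ (not merely in $\IM_{\hat{\sigma}^{\omega'}}$), and then choosing the straightening automorphisms from Proposition 5.3(d)(iii) and Theorem 3.6 in such a way that the state-preservation condition $\omega'\gamma=\omega$ survives the identification $\gamma(\ID^e)=\ID^e=\ID^{e'}$. The maximality claim requires careful use of the tensor product structure from Lemma 3.5 combined with the $\theta$-equivariance of $\gamma$ and $\gamma(\IZ_\omega)=\IZ_{\omega'}$; the compatibility of the identifications is a book-keeping argument in the spirit of the state-preservation arguments in the proof of Theorem 4.1.
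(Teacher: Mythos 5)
Your proposal is correct and follows essentially the same route as the paper: the forward direction via the Takesaki/KMS uniqueness of the modular group pulled back through Proposition 5.3(a), and the reverse direction via the Proposition 5.4 reduction to classical states, transfer of the translation-invariant maximal abelian algebra, extension by Theorem 3.6, and the verification $\omega'=\omega'\IE^e_{\omega_0}=\omega\alpha\IE^e_{\omega_0}=\omega\IE^e_{\omega_0}\alpha=\omega\alpha$ using $\alpha\IE^e_{\omega_0}\alpha^{-1}=\IE^e_{\omega_0}$. The ``main obstacle'' you single out---that $\gamma(\ID^e)$ be maximal abelian in all of $\IM$ and that the straightening automorphisms be chosen compatibly with $\omega'\gamma=\omega$---is precisely the step the paper itself passes over by asserting $\alpha_0(\ID_{\omega})=\ID_{\omega'}$ and invoking Proposition 5.4 ``without loss of generality,'' so your write-up is, if anything, more explicit than the published argument about where the care is needed.
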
 

\vsp 
\begin{proof} 
For the `only if' part, we use the uniqueness of the modular automorphism satisfying KMS condition associated with a faithful state to prove the following equality 
$$\alpha \sigma^{\omega}_t = \sigma^{\omega'}_t \alpha$$ 
for all $t \in \IR$, where 
$$(\IM,\theta,\omega) \equiv^{\alpha} (\IM,\theta,\omega')$$ 
Thus we have $\alpha(\IM_{\hat{\sigma}^{\omega}})=\IM_{\hat{\sigma}^{\omega'}}$ and $\theta \alpha = \alpha \theta$ on $\IM_{\hat{\sigma}^{\omega}}$ with $\omega = \omega' \alpha$ on $\IM_{\hat{\sigma}^{\omega}}$. 

\vsp 
Let $(\IM_{\hat{\sigma}^{\omega}},\theta,\omega) \equiv^{\alpha_0} (\IM_{\hat{\sigma}^{\omega'}},\theta,\omega')$. In particular, there exists translation invariant maximal abelian $C^*$ sub-algebras $\ID_{\omega}$ and $\ID_{\omega'}$ of $\IM_{\hat{\sigma}^{\omega}}$ and $\IM_{\hat{\sigma}^{\omega'}}$ respectively such that $\alpha_0(\ID_{\omega})=\ID_{\omega'}$. By Proposition 6.6, we assume without loss of generality that $\omega = \omega \IE^e_{\omega_0}$ with $\ID_{\omega}=\ID^e$ and $\omega' = \omega' \IE^e_{\omega_0}$ with $\ID_{\omega'}=\ID^e$. 
Let $\alpha$ be an automorphism on $\IM$ that commutes with $\theta$ and extends $\alpha_0:\ID_{\omega} \raro \ID_{\omega'}$. Such an $\alpha$ exists is guaranteed by Lemma 3.5.  

\vsp 
We claim that $\omega' = \omega \alpha$. For the equality we check the following steps:
$$\omega'$$
$$=\omega' \IE^e_{\omega_0}$$
$$=\omega \alpha \IE^e_{\omega_0}$$
$$=\omega \IE^e_{\omega_0} \alpha$$
$$=\omega \alpha,$$
where we have used the property that $\alpha \IE^e_{\omega_0} \alpha^{-1}=\IE^e_{\omega_0}$ since both sides are norm one projections from $\IM$ onto $\ID^e$ preserving the normalised trace $\omega_0$.   
\end{proof} 

\vsp 
As a consequence of Propositions proved so far, we have some additional results listed in the following theorems. 

\vsp 
\begin{thm} 
Let $\omega$ be a translation invariant state of $\IM$ and $\omega = \omega \IE_{\omega_0}$, where $\IE_{\omega_0}$ is a norm one projection from $\IM$ onto the maximal abelian $C^*$ sub-algebra $\ID_{\omega}$ described as in Proposition 6.6. Then $(\IM,\theta,\omega)$ is ergodic, weakly mixing, strongly mixing, pure if and only if $(\ID_{\omega},\theta,\omega)$ is ergodic, weakly mixing, strongly mixing, pure respectively. Furthermore the following hold:

\NI (a) $\omega$ is a factor state if and only if for each $x \in \ID_{\omega}$ we have
\be 
\mbox{sup}_{\{y \in \ID_{\omega}:||y|| \le 1 \}} |\omega(x \theta^n(y))-\omega(x)\omega(y)| \raro 0
\ee
as $n \raro -\infty$. 

\vsp 
\NI (b) Two-point spatial correlation function of $(\IM,\theta,\omega)$ decays exponentially i.e. for some $\delta > 0$ 
\be 
e^{\delta n}|\omega(x \theta^n(y))-\omega(x)\omega(y)| \raro 0
\ee 
for all $x,y \in \IM_{loc}$ as $n \raro -\infty$ 
if and only if two-point spatial correlation function of $(\ID_{\omega},\theta,\omega)$ decays exponentially i.e.  
\be 
e^{\delta n}|\omega(x \theta^n(y))-\omega(x)\omega(y)| \raro 0
\ee
for all $x,y \in (\ID_{\omega})_{loc}$ as $n \raro -\infty$.  
\end{thm}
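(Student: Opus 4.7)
The plan is to exploit the tensor product structure of the norm-one projection $\IE := \IE_{\omega_0}$ onto $\ID_\omega$, together with the identity $\omega = \omega \IE$. By Proposition 5.3 (d) there is an automorphism of $\IM$ commuting with $\theta$ sending $\ID_\omega$ to $\ID^e$ for some orthonormal basis $e=(e_i)$ of $\IC^d$, so after this replacement I may assume $\ID_\omega = \ID^e$. Then by Lemma 2.2 and the uniqueness of the tracial norm-one projection, $\IE$ decomposes as the infinite tensor product $\otimes_{n \in \IZ} \IE_0^{(n)}$ of the diagonal-part maps $\IE_0^{(n)} : \!M_d^{(n)}(\IC) \raro D_d^{(n)}(\IC)$. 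The central consequence is a decoupling identity: for $x \in \IM_{\Lambda_1}$, $y \in \IM_{\Lambda_2}$ with $\Lambda_1 \cap \Lambda_2 = \emptyset$,
\[
\IE(xy) \;=\; \IE(x)\,\IE(y).
\]
Combined with $\omega = \omega\IE$ and $\theta\IE = \IE\theta$, this yields, for $x,y \in \IM_{loc}$ and $|n|$ large enough that $\Lambda_1$ and $\theta^n(\Lambda_2)$ are disjoint,
\[
\omega(x\,\theta^n(y)) \;=\; \omega\bigl(\IE(x)\,\theta^n(\IE(y))\bigr),
\]
reducing local two-point functions on $\IM$ to two-point functions on $\ID_\omega$.

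For the forward implications (property on $\IM$ implies on $\ID_\omega$), the mixing and exponential decay properties pass to $\ID_\omega$ by trivial restriction since $\ID_\omega \subseteq \IM$. Ergodicity and purity pass by an extension trick: any decomposition $\omega|_{\ID_\omega} = \lambda\mu_1 + (1-\lambda)\mu_2$ into translation invariant states lifts via $\IE$ to $\omega = \lambda(\mu_1\IE) + (1-\lambda)(\mu_2\IE)$ on $\IM$, with each $\mu_i\IE$ translation invariant; extremality, respectively purity, of $\omega$ on $\IM$ forces $\mu_i\IE = \omega$, and restricting back to $\ID_\omega$ gives $\mu_i = \omega|_{\ID_\omega}$.

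For the reverse implications (property on $\ID_\omega$ implies on $\IM$), I apply the decoupling identity displayed above. Strong and weak mixing, ergodicity via Ces\`aro averaging (the finitely many small-$n$ terms contribute an error $O(n_0/N) \raro 0$), and the exponential decay estimate in part (b) all transfer from $\ID_\omega$ to local test functions in $\IM$; the extension to arbitrary $x,y \in \IM$ follows by density of $\IM_{loc}$ and the uniform bound $|\omega(x\theta^n(y))| \le \|x\|\|y\|$ together with a standard $\varepsilon$-approximation. For purity in this direction, if $\omega|_{\ID_\omega}$ is a Dirac measure, translation invariance forces its support to be a constant sequence $(i_0,i_0,\ldots)$, and then $\omega(x) = \IE(x)(i_0,i_0,\ldots) = \langle \otimes_n e_{i_0}, x\,\otimes_n e_{i_0}\rangle$ for any local $x$, exhibiting $\omega$ as the pure infinite tensor product vector state based on $e_{i_0}$. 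Part (a) follows from the Kolmogorov-type clustering characterization of factor states (Theorem 2.5 in [Pow]) combined with the same decoupling, together with the bound $\|\IE(y)\| \le \|y\|$ to propagate the supremum over the unit ball.

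I expect the main obstacle to be part (a), where one must control the supremum over the full unit ball of $\IM$ uniformly in $n$: the decoupling identity applies only to local elements and with a support-dependent threshold on $n$, so the transfer must be performed in two steps, first approximating $y$ in norm by a local $y_0$ with $\|y_0\| \le 1 + \varepsilon$, then invoking the hypothesis on $\ID_\omega$ with $u = \IE(y_0)$. The estimate $\|\IE\| = 1$ together with the density of $\IM_{loc}$ in $\IM$ will make the two $\varepsilon$-errors independent of $n$, so the supremum can be transferred cleanly to $\ID_\omega$.
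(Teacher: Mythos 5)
Your proposal is essentially the paper's own argument: after the same reduction to $\ID_{\omega}=\ID^e$ via the commuting automorphism of Proposition 5.4, the paper uses exactly your decoupling identity, $\omega(x\theta^n(y)z)=\omega(\IE^e_{\omega_0}(xz)\,\theta^n(\IE^e_{\omega_0}(y)))$ for local elements with disjoint supports, to transfer ergodicity, mixing and the clustering in (a)--(b), the same lifting of convex decompositions through $\IE^e_{\omega_0}$ together with the Dirac-measure-at-a-translation-fixed-point argument for purity, and the same appeal to Powers' Theorem 2.5 for the factor criterion. The differences are only expository: the paper is even terser on the approximation step in (a) (where, in fact, it suffices to approximate the fixed element $x$ by a local one and use norm continuity of $\IE^e_{\omega_0}$ to apply the factorization to the whole half-sided algebra containing $y$, rather than approximating $y$), so your sketch matches and slightly elaborates the paper's route.
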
 

\vsp 
\begin{proof} Since properties under our consideration are invariant for translation dynamics, we assume without loss of generality that $\ID_{\omega}=\ID^e$ and $\omega = \omega \IE^e_{\omega_0}$ by Proposition 6.6.  

\vsp 
We fix any $x,y,z \in \IM_{loc}$. Then $x,z \in \IM_{(-\infty,m]}$ and there exists an $n_0 \in \IZ$ such that 
$\theta^n(y) \in \IM_{(m,\infty)}$ for all $n \ge n_0$. For such $n \ge n_0$, we have 
$$\omega(x \theta^n(y)z)$$ 
$$=\omega( \IE^e_{\omega_0}(x \theta^n(y)z))$$ 
$$=\omega(\IE^e_{\omega_0}(xz) \IE^e_{\omega_0}(\theta^n(y)))$$
$$=\omega(\IE^e_{\omega_0}(xz) \theta^n (\IE^e_{\omega_0}(y)))$$
This shows that ergodicity, weak weakly mixing, strongly mixing of $(\ID_{\omega},\theta,\omega)$ implies ergodicity, weak weakly mixing, strongly mixing of $(\IM,\theta,\omega)$ respectively. Along the same line of argument, the asymptotic beheviour given in (25) also says that the state $\omega$ of $\IM$ is a factor by Theorem 2.5 in [Pow]. That asymptotic behaviour (27) and (26) are equivalent follows along the same line of argument. 

\vsp 
Let $\omega$ be pure and its restriction to $\ID^e$, $\omega_c = \mu \omega_1 + (1-\mu)\omega_2$ for some $\mu \in (0,1)$ and states $\omega_1,\omega_2$ of $\ID^e$. Then we have 
$\omega = \omega \IE^e_{\omega_0} = \mu \omega_1 \IE^e_{\omega_0} + (1-\mu)\omega_2\IE^e_{\omega_0}$. 
By pure property of $\omega$, we have $\omega = \omega_1 \IE^e_{\omega_0}$ and $\omega= \omega_2 \IE^e_{\omega_0}$. Thus 
$\omega = \omega_1 = \omega_2$ on $\ID^e$. Conversely, let $\omega$ be pure on $\ID^e$. Since pure $\omega$ is translation invariant, it has support at a singleton point and the point is fixed by the translation action. 
Thus we get $\omega = \omega_{e_i}$ on $\ID^e$ for some $1 \le i \le d$, where $\omega_{e_i}= \otimes_{n \in \IZ} \rho^{(n)}_{e_i}$ and $\rho^{(n)}_{e_i}(x) = <e_i,xe_i>, \forall x \in \!M^{(n)}_d(\IC)$ and $n \in \IZ$. 
Since $\omega = \omega \IE^e_{\omega_0}$ and $\omega_{e_i}= \omega_{e_i} \IE^e_{\omega_0}$, 
we also have $\omega = \omega \IE^e_{\omega_0}= \omega_{e_i} \IE^e_{\omega_0}=\omega_{e_i}$. Thus $\omega$ is pure if and only if $\omega$ on $\ID_{\omega}$ is pure.   
\end{proof}

\vsp 
\begin{thm} 
Let $\omega$ be a translation invariant state $\omega$ of $\IM$. Then the following hold:

\NI (a) $\omega$ is a factor state if and only if $\omega$ is weakly Kolmogorov;

\NI (b) If $\omega$ is a factor state then it is pure if and only the mean entropy $s(\omega)=0$;

\NI (c) If $s(\omega)=0$ and $\omega$ is not a pure state of $\IM$ then $\omega$ is not weakly Kolmogorov and $\omega|\ID_{\omega}$ is not Kolmogorov, 
where $\ID_{\omega}$ is taken as in Proposition 6.6.

\end{thm} 

\vsp 
\begin{proof} 
Since both factor and weakly Kolmogorov property are invariance, without loss of generality we take $\clb_0=\clb_L$ for the purpose of the proof. We have already gave a proof in the introduction that weak Kolmogorov property gives factor property 
of $\omega$. For the converse, we will use Power's criteria [Pow] given in (7) for the factor property of $\omega$ as follows: Note that $\{x \zeta_{\omega}:||x|| \le 1 \}$ is dense in the unite ball of the Hilbert space $\clh_{\omega}$ and thus for any two elements $x,y \in \IM$, we have 
$$||F_{n]}x \zeta_{\zeta}||$$
$$=\mbox{sup}_{y \in \IM_L, ||y|| \le 1}\langle \theta^n(y^*), F_{n]} x \zeta_{\omega} \rangle$$   
$$=\mbox{sup}_{y \in \IM_L, ||y|| \le 1} |\langle F_{n]} \theta^n(y^*), x \zeta_{\omega}>|$$
$$=\mbox{sup}_{y \in \IM_L, ||y|| \le 1} |\langle \theta^n(y^*), x \zeta_{\omega}>|$$
$$=\mbox{sup}_{y \in \IM_L, ||y|| \le 1} \omega(\theta^n(y)x)|$$
$$ \raro 0$$
as $n \raro -\infty$ once $\omega(x)=0$ by Power's criteria. Going along the same line of the proof, we also verify same holds
for all $x,y \in \theta^{-m}(\IM_L),\; m \ge 1$ provided $\omega(x)=0$. Now we use the linear property of the map $\theta$ to prove that $F_{n]} x \zeta_{\omega} \raro \omega_0(x)\zeta_{\omega}$ strongly as $n \raro -\infty$ for all $x \in \bigcup_{n \ge 1}\theta^n(\IM_L)$ and thus $F_{n]} \raro |\zeta_{\omega} \rangle \langle \zeta_{\omega}|$ as $n \raro -\infty$.  

\vsp 
The first statement (b) is obvious now by Theorem 6.13 since $h_{CS}(\IM,\theta,\omega)=0$ for a pure state $\omega$. But now we will investigate little more deep into this issue to show the converse is also true for a translation invariant factor state $\omega$. We assume without lose of generality that the state $\omega$ on $\IM$ satisfies $\omega = \omega \IE^{e}_{\omega_0}$ and thus $s(\omega)=h_{KS}(\ID^e,\theta,\omega)$. 

\vsp 
If $[\pi_{\omega}(\ID^e)\zeta_{\omega}]$ is non-degenerate i.e. not one dimensional then 
$$f_{n]}=[\pi_{\omega}(\theta^n(\ID^e_{\IZ_-})\zeta_{\omega}] \downarrow |\zeta_{\omega} \rangle \langle \zeta_{\omega}|$$ 
as $n \raro -\infty$ if and only if $h_{KS}(\ID^e,\theta,\omega) > 0$ by Rohlin-Sinai theorem [Pa], where $\IZ_- = \{n \in \IZ: n \le 0 \}$. In particular, $f_{n]} \downarrow |\zeta_{\omega} \rangle \langle \zeta_{\omega}|$ as $n \raro -\infty$ if and only if $s(\omega)> 0$ 
provided $\omega$ on $\ID^e$ is not pure i.e. is not degenerate. This completes the proof of (b) once we use (a).

\vsp 
The statement (c) follows from (a) and (b)
 
\end{proof}

\vsp 
\begin{rem} 
There are translation invariant non pure state $\omega_c$ on $\ID^e$ for which $h_{KS}(\ID^e,\theta,\omega_c)=0$. For such a state $\omega_c$, $\omega = \omega_c \IE^e_{\omega_0}$ is a translation invariant non pure state of $\IM$ with $s(\omega)=0$. This shows that the zero mean entropy, i.e. $s(\omega)=0$ in general, does not imply $\omega$ is pure. 
It is simple to prove Kolmogorov property for $(\ID_{\omega},\theta,\omega)$ whenever $(\IM,\theta,\omega)$ is Kolmogorov. Though, the converse statement is likely to be true, at present we do not have a proof for the converse statement. In case, the converse statement is true, we can lift Ornstein's counter example [Or2] for a non isomorphic translation dynamics to Bernoulli shift to prove that positive Connes-St\o rmer dynamical entropy is also not a complete invariant for translation invariant factor states of $\IM$.  
\end{rem}

\section{ A complete weak isomorphism theorem for Bernoulli states of $\IM$ }

\vsp 
\begin{lem} 
For two translation invariant states $\omega_1$ and $\omega_2$ on $\IM$, 
$(\IM,\theta,\omega_1)$ and $(\IM,\theta,\omega_2)$ are isomorphic if and only if $(\pi_{\omega_1}(\IM)'',\Theta_1, \phi_{\zeta_{\omega_1}})$ and $(\pi_{\omega_1}(\IM)'',\Theta_1,\phi_{\zeta_{\omega_1}})$ are isomorphic. 
\end{lem} 

\vsp 
\begin{proof}
A simple proof uses the fact that the $C^*$- algebra $\IM$ being a $UHF$ [Pow], an automorphism $\alpha:\pi_{\omega_1}(\IM)'' \raro \pi_{\omega_2}(\IM)''$ determines a unique automorphism 
$\alpha_0:\IM \raro \IM$ such that $\alpha(\pi_{\omega_1}(x))=\pi_{\omega_2}(\alpha_0(x))$ for 
all $x \in \IM$. 
\end{proof} 

\vsp 
For two translation invariant state $\omega_1$ and $\omega_2$ of $\IM$, isomorphism of 
dynamical systems $(\pi_{\omega_1}(\ID^e)'',\Theta,\hat{\omega}_1)$ and $(\pi_{\omega_1}(\ID^e)'',\Theta,\hat{\omega}_2)$ 
may not make dynamical systems $(\ID^e,\theta,\omega_1)$ and $(\ID^e,\theta,\omega_2)$ isomorphic even if 
$\omega_1$ and $\omega_2$ are infinite tensor product states. Thus unlike isomorphism problem in classical spin chain, quantum chain problem give raises no additional mathematical obstructions or favour while we deal with isomorphism problem 
in the associated $W^*$- or von-Neumann dynamical systems, where simplicity of the underlying $C^*$ algebra $\IM$ makes crucial difference from the classical situation where $\ID^e$ is far from being simple. This essentially forces us to consider a weaker notion for isomorphism between two dynamical systems for quantum spin chain in order to include classical
isomorphism problems as part of quantum problem.  

\vsp 
We say two dynamical systems $(\IM,\theta,\omega_1)$ and $(\IM,\theta,\omega_2)$ are {\it weakly isomorphic } if there exists
unital completely positive maps $\tau_{12}: \IM \raro \pi_{\omega_2}(\IM)''$ and 
$\tau_{21}:\IM \raro \pi_{\omega_1}(\IM)''$ satisfying the following:

\vsp 
\NI (a) $\omega_1 = \hat{\omega_2} \tau_{12}$ and $\omega_2 = \hat{\omega}_1 \tau_{21} $ on $\IM$;

\vsp 
\NI (b) $\tau_{12} \theta_1  = \Theta_2 \tau_{12}$ and $\tau_{21} \theta_2 = \Theta_1 \tau_{21}$;

\NI where we have used notation $\hat{\omega}$ for a GNS vector state of $\pi_{\omega}(\IM)''$ associated with $\omega$ of $\IM$. 

\vsp 
Let $\omega$ be a faithful state on the $C^*$ algebra $\IM=\otimes_{n \in \IZ} \!M_d^{(n)}(\IC)$ and $(\clh_{\omega},\pi_{\omega}, \zeta_{\omega})$ be the GNS representation of $(\IM,\omega)$ so that $\omega(x)=\langle \zeta_{\omega}, \pi_{\omega}(x) \zeta_{\omega} \rangle$ for all $x \in \IM$. Thus the unit vector $\zeta_{\omega} \in \clh_{\omega}$ is cyclic and separating for von-Neumann algebra $\pi_{\omega}(\cla)''$ that is acting on the Hilbert space $\clh_{\omega}$. Let $\Delta_{\omega}$ and $\clj_{\omega}$ be the modular and conjugate operators on $\clh_{\omega}$ respectively of $\zeta_{\omega}$ as described in section 2. We recall, the modular automorphisms group $\sigma^{\omega}=(\sigma^{\omega}_t:t \in \IR)$ 
of $\omega$ is defined by 
$$\sigma_t^{\omega}(a)=\Delta^{it}_{\omega}a\Delta_{\omega}^{-it}$$
for all $a \in \pi_{\omega}(\IM)''$. 

\vsp 
For a given faithful state $\omega$ of $\IM$, $(\sigma^{\omega}_t)$ may not keep $\ID^e_{\omega}$ invariant, where $\ID^e_{\omega}=\pi_{\omega}(\ID^e)''$ is a 
von-Neumann sub-algebra of $\clm_{\omega}=\pi_{\omega}(\IM)''$. However, for an 
infinite tensor product state $\omega_{\rho} = \otimes_{n \in \IZ} \rho^{(n)}$ with $\rho^{(n)}=\rho$ for all $n \in \IZ$, $\sigma_t^{\omega_{\rho}}= \otimes_{n \in \IZ}\sigma_t^{\rho_n}$ preserves $\ID^e_{\omega}$ if $\rho$ is a diagonal matrix in 
the orthonormal basis $e=(e_i)$ of $\IC^d$. In such a case, $\IE^e_{\omega_{\rho}}: \clm_{\omega_{\rho}} \raro \ID_{\omega_{\rho}}$ satisfy the bi-module property (17) 
and furthermore, we have  
\be 
\IE^e_{\omega_{\rho}}(\pi_{\omega_{\rho}}(x))=\pi_{\omega_{\rho}}(\IE^e_{\omega_0}(x))
\ee
for all $x \in \IM$, where $\IE^e_{\omega_0}$ is the norm one projection from $\IM$ onto $\ID^e$ with respect to the unique normalised trace $\omega_0$ of $\IM$. One can write explicitly 
$$\IE^e_{\omega_0} = \otimes_{n \in \IZ} \IE^{(n)}_0$$
where $\IE^{(n)}_0 = \IE_0$ for all $n \in \IZ$ and $\IE^e_0$ is the norm one projection 
from $\!M_d(\IC)$ onto the algebra of diagonal matrices $\!D_d(\IC)$ with respect to 
the basis $(e_i)$. The map $\IE^e_0:\!M_d(\IC) \raro \!D_d(\IC)$ is given by 
$$\IE^e_0(x) = \sum_{1 \le i \le d} \langle e_i, x e_i \rangle |e_i \rangle \langle e_i|$$
for all $x \in \!M_d(\IC)$. 

\vsp 
The map $\rho \raro \hat{\rho}$ is one to one and onto between the set of density matrices in $\!M_d(\IC)$ and the set of states on $\!M_d(\IC)$, where 
$$\hat{\rho}(x)=tr(x\rho)$$ 
for all $x \in \!M_d(\IC)$. We define entropy for a density matrix $\rho$ in $\!M_d(\IC)$ by 
$$S(\rho) = -tr(\rho ln \rho)$$
$$=-\sum_i \lambda_i ln(\lambda_i),$$ 
where $\rho= \sum_{1 \le i \le d} \lambda_i |e_i \rangle\langle e_i|$ 
for some $\lambda_i \in [0,1]$ with $\sum_i \lambda_i = 1$. One alternative 
description of $S(\rho)$ is given by a variational formula [OP]:   
$$S(\rho) = \mbox{inf}_{f=(f_i)} - \sum_i \hat{\rho}(|f_i\rangle\langle f_i|)ln (\hat{\rho}(|f_i\rangle\langle f_i|)),$$ 
where infimum is taken over all possible orthonormal basis. Thus the variational expression for $S(\rho)$ achieves its values for the basis $e=(e_i)$ that makes $\rho$ diagonal.   

\vsp 
\begin{thm} 
Mean entropy is a complete weak invariant of translation dynamics for the class of infinite tensor product faithful states of $\IM$ i.e. Two infinite tensor product faithful states $\omega_{\rho}$ and $\omega_{\rho'}$ give weak isomorphic translation dynamics if and only if $S(\rho)=S(\rho')$. 
\end{thm}

\vsp 
\begin{proof} 
For the time being, we assume both the faithful states $\rho$ and $\rho'$ admit diagonal representations with respect to an orthonormal basis $e=(e_i)$ of $\IC^d$. So the restrictions of $\omega_{\rho}$ and $\omega_{\rho'}$ are Bernoulli states on $\ID^e$ with equal Kolmogorov-Sinai dynamical entropies. Thus by a theorem of D. Ornstein [Or1], we find an automorphism $\bar{\beta}_0:\pi_{\rho}(\ID^e)'' \raro \pi_{\rho'}(\ID^e)''$ such that $\bar{\beta}_0 \theta = \theta \bar{\beta}_0$ on $\pi_{\rho}(\ID^e)''$ and $\omega_{\rho} = \omega_{\rho'} \bar{\beta}_0$ on $\pi_{\rho}(\ID^e)''$. 

\vsp 
For the time being we assume that $\bar{\beta}_0$ determines an automorphism $\beta_0$ on $\ID^e$ such that \be 
\bar{\beta}_0(\pi_{\rho}(x)) = \pi_{\rho}(\beta_0(x))
\ee 
for all $x \in \ID^e$ and so $\beta_0 \theta = \theta \beta_0$ on $\ID^e$. Let $\beta$ be an automorphism on $\IM$ that extends $\beta_0:\ID^e \raro \ID^e$ and commutes with $\theta:\IM \raro \IM$. Such an automorphism exists by Corollary 3.6.  

\vsp 
Now we consider the state $\omega= \omega_{\rho} \beta$ on $\IM$. The state $\omega$ is faithful and its modular automorphism group $\sigma_t^{\omega}:\pi_{\omega}(\IM)'' \raro \pi_{\omega}(\IM)''$ associated with the cyclic and separating vector $\zeta_{\omega}$ for $\pi_{\omega}(\IM)''$ satisfies the relation 
\be 
\sigma_t^{\omega} \hat{\beta} = \hat{\beta} \sigma_t^{\omega_{\rho}}
\ee 
on $\pi_{\omega_{\rho}}(\IM)''$, where $\hat{\beta}: \pi_{\omega_{\rho}}(\IM)'' \raro \pi_{\omega}(\IM)''$ is the automorphism defined by extending the map
\be 
\hat{\beta}: \pi_{\omega_{\rho}}(x) \raro \pi_{\omega}(\beta(x))
\ee   
for all $x \in \IM$. For a proof, we can use uniqueness of the automorphisms group of $\omega$ satisfying ${1 \over 2}$-KMS condition (14) [BR2].   

\vsp 
Since automorphism $\beta$ is an extension of automorphism $\beta_0$ of $\ID^e$, the equation (30) says that $\sigma_t^{\omega}$ also preserves $\pi_{\omega}(\ID^e)''$. Thus once again by a theorem of M. Takesaki [Ta2], there exists a norm one projection 
$$\IE^e_{\omega}:\pi_{\omega}(\IM)'' \raro \pi_{\omega}(\IM)''$$ 
with range equal to $\pi_{\omega}(\ID^e)''$ satisfying 
$$\omega = \omega \IE^e_{\omega} $$ 
on $\pi_{\omega}(\IM)''$.

\vsp 
The rest of the proof are done in the follows elementary steps:

\vsp 
\NI (a) $\hat{\beta}^{-1} \IE^e_{\omega_{\rho}} \hat{\beta} = \IE^e_{\omega}$;

\vsp 
\begin{proof} 
It is obvious that both the maps $\IE^e_{\omega}$ and $\hat{\beta}^{-1} \IE^e_{\omega_{\rho}} \hat{\beta}$ are norm one projections satisfying bi-module property (17) from $\pi_{\omega}(\IM)''$ onto $\pi_{\omega}(\ID^e)''$. Furthermore, 
$$\omega \hat{\beta}^{-1} \IE^e_{\omega_{\rho}} \hat{\beta}$$ 
$$=\omega_{\rho} \IE^e_{\omega_{\rho}} \hat{\beta}$$
$$=\omega_{\rho} \beta$$
$$=\omega$$
Thus by uniqueness of the norm project with respect to $\omega$, we get the equality in (a). 
\end{proof} 

\vsp 
\NI (b) $\IE^e_{\omega} \pi_{\omega}(x) = \pi_{\omega}(\IE^e_{\omega_0}(x))$ for all $x \in \IM$. 

\begin{proof}
For any automorphism $\beta$, $\omega_0 \beta$ is also a normalised trace and thus by the uniqueness of normalised trace, we get $\omega_0 \beta = \omega_0$. Furthermore, $\beta^{-1} \IE^e_{\omega_0} \beta$ is also the norm one projection with respect to the unique 
normalised trace $\omega_0$ and thus we get $\beta^{-1} \IE^e_{\omega_0} \beta = \IE^e_{\omega_0}$ on $\IM$ by the uniqueness of the norm one projection with respect to $\omega_0$. We now complete the proof of (b) by the covariant property (a) and 
(28) valid for $\omega_{\rho}$.   
\end{proof} 

\vsp 
Let $\alpha_{p}$ be the automorphism on $\IM$ that extends linearly the map $x= \otimes_{n \in \IZ} x_n \raro \otimes_{n \in \IZ} x_{p(n)}$, where $p$ is a permutation on $\IZ$ that keeps all but finitely many points of $\IZ$ unchanged.  
We also define automorphism $\hat{\alpha}_p:\pi_{\omega}(\IM)'' \raro \pi_{\omega}(\IM)''$ by extending the map 
$$\pi_{\omega}(x) \raro \pi_{\omega}(\alpha_p(x))$$
for all $x \in \IM$ to its weak$^*$-closer. 
In particular, we have 
$$\alpha_{p} \IE^e_{\omega_0} = \IE^e_{\omega_0} \alpha_{p}$$ on $\IM$ by (28). 

\vsp 
\NI (c) $\omega \alpha_{p} = \omega$ for all permutation $p$ on $\IZ$. 

\begin{proof} We verify the following equalities: 
$$\omega(x)$$
$$=\langle \zeta_{\omega},\pi_{\omega}(x) \zeta_{\omega}\rangle$$
$$=\langle \zeta_{\omega}, \IE^e_{\omega}(\pi_{\omega}(x)) \zeta_{\omega} \rangle$$
(since $\omega \IE^e_{\omega}=\omega$ )
$$=\langle \zeta_{\omega}, \hat{\alpha}_p ( \IE^e_{\omega}(\pi_{\omega}(x)))\zeta_{\omega} \rangle$$
(since $\omega \alpha_p = \omega$ on $\ID^e$ as $\omega=\omega_{\rho'}$ on $\ID^e$ by 
our construction.) 
$$=\langle \zeta_{\omega}, \hat{\alpha}_p (\pi_{\omega}(\IE^e_{\omega_0}(x))) \zeta_{\omega} \rangle$$
( by (b) )
$$= \langle \zeta_{\omega}, \pi_{\omega}( \alpha_p(\IE^e_{\omega_0}(x))) \zeta_{\omega} \rangle$$
( by definition of $\hat{\alpha}_p$ )
$$= \langle \zeta_{\omega}, \pi_{\omega}( \IE^e_{\omega_0}(\alpha_p(x))) \zeta_{\omega} \rangle$$
( since $\IE^e_{\omega_0} \alpha_p = \alpha_p \IE^e_{\omega_0}$ )
$$= \langle \zeta_{\omega}, \IE^e_{\omega}(\pi_{\omega}(\alpha_p(x))) \zeta_{\omega} \rangle$$
( again by (b) )
$$= \langle \zeta_{\omega}, \pi_{\omega}(\alpha_p(x)) \zeta_{\omega} \rangle$$
( since $\omega = \omega \IE^e_{\omega}$ )
$$=\omega(\alpha_p(x))$$
\end{proof} 

\vsp 
\NI (d) $\omega$ is an infinite tensor product state;

\begin{proof} 
It follows from (c) once we use a quantum version of classical de Finetti theorem [St\o 2], which says any translation invariant factor state with permutation symmetry 
is an infinite product state.    
\end{proof} 

\vsp 
The state $\omega$ being an infinite product translation invariant state of $\IM$,   
we may write $\omega=\otimes_{n \in \IZ} \omega^{(n)}$, where $\omega^{(n+1)}=\omega^{(n)}$ for all $n \in \IZ$ and $\omega^{(0)}(x)=tr(x\rho_0)$ for some density matrix $\rho_0$ in $\!M_d(\IC)$. The mean entropy being equal to CS dynamical entropy for any infinite product state, we get $S(\rho_0)=S(\rho)$. 

\vsp 
Since $\omega = \omega_{\rho} \beta_0 = \omega_{\rho'}$ on $\ID^e$ by our construction, $\hat{\rho}_0(|e_k \rangle \langle e_k|)=\hat{\rho'}(|e_k \rangle \langle e_k|)$ for all $1 \le k \le d$. Thus 
$$S(\rho_0)$$
$$=S(\rho)$$
$$=S(\rho')$$
$$= - \sum_k \hat{\rho'}(|e_k \rangle \langle e_k|) ln \hat{\rho'}(|e_k \rangle \langle e_k|)$$
$$= - \sum_k \hat{\rho}_0(|e_k \rangle\langle e_k|)ln(\hat{\rho}_0(|e_k \rangle\langle e_k|))$$ 

\vsp 
By our starting assumption $\rho'$ is also diagonal with respect to the orthonormal basis $e=(e_i)$. Thus $\rho_0=\rho'$ once we show $\rho_0$ is also diagonal in the basis $e=(e_i)$.

\vsp 
\NI (e) For a density matrix $\rho_0$ in $\!M_d(\IC)$ we have 
\be 
S(\rho_0) = - \sum_i \hat{\rho}_0(|e_i \rangle \langle e_i|)ln(\hat{\rho}_0(|e_i \rangle \langle e_i|))
\ee 
if and only if $\rho_0 = \sum_i \lambda_i |e_i \rangle \langle e_i|$ for some $\lambda_i \in [0,1]$ i.e. $\rho_0$ admits a diagonal representation in the basis $e=(e_i)$. 

\vsp 
\begin{proof} 
Let $E_0$ be the norm one projection from $\!M_d(\IC)$ on $d$ dimensional diagonal matrices $D_d(\IC)$ with respect to the orthonormal basis $e=(e_i)$. Then equality in 
(32) says $S(\rho_0)=S(E_0(\rho_0))$ as 
$$\hat{\rho_0} \circ E_0(x)$$
$$=tr(\rho_0 E_0(x))$$
$$=tr(E_0(\rho_0)E_0(x))$$
$$=tr(E_0(\rho_0)x)$$
for all $x \in \!M_d(\IC)$. We compute now the relative entropy [OP] 
$$S(\hat{\rho}_0, \hat{\rho}_0 \circ E_0))$$
$$= - tr( \rho_0 ln (\rho_0 - E_0(\rho_0))$$
$$= S(\rho_0) + tr E_0(\rho_0 ln E_0(\rho_0))$$
$$(\mbox{ by the bi-module property of}\;E_0 )$$
$$= S(\rho_0) + tr E_0(\rho_0) ln (E_0(\rho_0))$$
$$= S(\rho_0) - S( E_0(\rho_0))$$
$$=0\;\;\mbox{by (32)}$$ 
Thus $\hat{\rho}_0 = \hat{\rho}_0 E_0$ i.e. $\rho_0=E_0(\rho_0)$. This completes the proof for `only if' part.

\vsp 
Thus we complete the proof by concluding that $\hat{\rho}_0$ admits a diagonal representation in $e=(e_i)$ and so $\rho_0=\rho'$ since they have equal values 
on $|e_i \rangle\langle e_i|$ for all $1 \le i \le d$ as $\omega = \omega_{\rho'}$ on $\ID^e$. Thus $\omega = \omega_{\rho_0}=\omega_{\rho'}$.

\vsp 
For more general faithful states $\rho$ and $\rho'$ of $\!M_d(\IC)$ satisfying $S(\rho)=S(\rho')$, we can find an element $g \in S U_d(\IC)$ such that $\rho$ and $\rho' \beta_g$ are simultaneously diagonalizable with respect to an orthonormal basis 
$(e_i)$. Since $S(\rho')=S(g\rho'g^*)$ and $\beta_g \theta = \theta \beta_g$ for 
any $g \in SU_d(\IC)$, we can complete the proof by the first part of the argument. 
\end{proof}

\vsp 
This completes the proof provided $\bar{\beta_0}$ satisfies (29) for some automorphism $\beta_0$ on $\ID^e$. Now we revisit D. Ornstein's construction [Or1] of $\bar{\beta}$, where he constructed a sequence of auto-morphisms $\beta_0^{(n)}$ on $\ID^e$ that commutes with $\theta$ such that $\omega_{\rho} \beta_0^{(n)}(x) \raro \omega_{\rho'}(x)$, where each $\omega_{\rho} \beta_0^{(n)}$ is an infinite tensor product state. So for each $n \ge 1$, we find an automorphism $\beta^{(n)}$ on $\IM$
such that $\omega_{\rho} \beta^{(n)}$ is also an infinite tensor product state and $\omega_{\rho} \beta^{(n)} \IE^e_{\omega_0} = \omega_{\rho} \beta^{(n)}$. So $\omega{\rho} \beta^{(n)}(x) \raro \omega_{\rho'}(x)$ for all $x \in \IM$ as same holds 
for all $x \in \ID^e$. 

\vsp 
We consider now the sequence of automorphism $\hat{\beta^{(n)}}:\IM \raro \pi_{\omega_{\rho}}(\IM)''$ associated with 
$\beta^{(n)}$ defined by $x \raro \pi_{\omega_{\rho}}(\beta^{(n)}(x))$ and extract a subsequence still denoting them by $\hat{\beta}^{(n)}$ such that $\hat{\beta}^{(n)}(x) \raro \hat{\beta}(x)$ for all $x \in \IM$ in the boubded weak topology of William Arveson on the unit ball of unital completely positive maps, where $\hat{\beta}:\IM \raro \pi_{\omega_{\rho}}(\IM)''$
is an unital completely positive map. 

\vsp 
That $\hat{\beta} \theta = \hat{\theta} \hat{\beta}$ and $\omega_{\rho} \hat{\beta} \IE^e_{\omega_0} = \omega_{\rho} \hat{\beta}$ on $\IM$ are obvious. For $\omega_{\rho} \hat{\beta} = \omega_{\rho'}$ on $\IM$, we once again use that they agree on $\ID^e$ by Ornstien's construction and both side of states are $\IE^e_{\omega_0}$ invariant on $\IM$.   

\vsp 
For the inter-twiner in the reverse direction, we consider the sequence inverse of $\beta_0^{(n)}$ instead of $\beta_0^{(n)}$. 

\end{proof}

\vsp 
Note that the faithful property of states of $\IM$ are also an invariant for translation dynamics and thus two infinite translation invariant product states with equal entropy need not give isomorphic dynamics. This observation is a contrast to classical situation where $\ID^e=C(\Omega^{\IZ})$ are isomorphic as a topological space for all values of $d \ge 2$, where $\Omega=\{1,2,..,d\}$. In contrast $C^*$ algebras $\otimes_{n \in \IZ} \!M^{(n)}_2(\IC)$ and $\otimes_{n \in \IZ}\!M^{(n)}_3(\IC)$ are not isomorphic [Gl].

\bigskip
{\centerline {\bf REFERENCES}}

\begin{itemize} 
\bigskip 
\item{[AC]} Accardi, Luigi; Cecchini, Carlo: Conditional expectations in von Neumann algebras and a theorem of Takesaki.
J. Funct. Anal. 45 (1982), no. 2, 245-273. 
 
\item{[Ar]} Araki, H.: Relative entropy of states of von-Neumann algebra,
Publ. RIMS, Kyoto Univ., 11, pp. 809-833, 1976.

\item{[ChE]} Choi, Man-Duen; Effros, Edward G.: Nuclear $C^*$-Algebras and the Approximation Property, American Journal of Mathematics, Vol. 100, No.1, pp. 61-79 (1978).

\item{[CS]} Connes, A.; St\o rmer, E.: Entropy of automorphisms of II$_1$ -von Neumann algebras, Acta Math. 134 (1975), 289-306.

\item{[CT]} Connes, A., Takesaki, M.: The flow of weights on factors of type III,
T\'ohoku Math. Journ. 29 (1977), 473-575. 

\item{[Co]} Connes, A.: Entropie de Kolmogoroff-Sinai et mécanique statistique quantique.
C. R. Acad. Sci. Paris Sér. I Math., 301, 1–6 (1985).

\item{[CNT]} Connes, A., Narnhofer, H. and Thirring, W.: Dynamical entropy of $C^*$-
algebras and von Neumann algebras, Commun. Math. Phys. 112 (1987), 691-719.

\item{[CFS]} Cornfeld, I. P. Fomin, S. V.; Sinaĭ, Ya. G. Ergodic theory. Translated from the Russian by A. B. Sosinskiĭ. Grundlehren der Mathematischen Wissenschaften [Fundamental Principles of Mathematical Sciences], 245. Springer-Verlag, New York, 1982.

\item{[Gl]} Glimm, James G.: On a certain class of operator algebras. Trans. Amer. Math. Soc. 95, 318-340 (1960).

\item{GoB]} Golodets, V. Ya., Boyko, M. S.: Dynamical entropy for a class of algebraic
origin automorphisms. Mat. Fiz. Anal. Geom., 8, 385–391 (2001).

\item {[Ka]} Kadison, Richard V.: A generalized Schwarz inequality and algebraic invariants for operator algebras,  Ann. of Math. (2)  56, 494-503 (1952). 

\item {[Kak]} Kakutani, S.: On Equivalence of Infinite Product measures, Ann. of Math (2). 49, 214-226, (1948), 
 
\item {[Mo1]} Mohari, A.: Pure inductive limit state and Kolmogorov property. II 
Journal of Operator Theory. vol 72, issue 2, 387-404 (2014).   
    
\item {[Mo2]} Mohari, A.: Translation invariant pure state on $\otimes_{k \in \IZ}\!M^{(k)}_d(\IC)$ and Haag duality, Complex Anal. Oper. Theory 8 (2014), no. 3, 745-789.

\item{[Mo3]} Mohari, A.: Translation invariant pure state on $\clb=\otimes_{k \in \IZ}\!M^{(k)}_d(\IC)$ and its split property, J. Math. Phys. 56, 061701 (2015).

\item{[Mo4]} Mohari, A.:  A mean ergodic theorem of an amenable group action  Infinite Dimensional Analysis, Quantum Probability and Related Topics Vol. 17, No. 01, 1450003 (2014).

\item{[Mo5]} Mohari, A.: Spontaneous $SU_2(C)$ symmetry breaking in the ground states of quantum spin chain. J. Math. Phys. 59 (2018), no. 11, 111701, 36 pp.

\item {[Mu]}  Murphy, G. J.: C$^*$ algebras and Operator theory. Academic press, San Diego 1990. 

\item {[NS]} Neshveyev, S.: St\o rmer, E. : Dynamical entropy in operator algebras. Springer-Verlag, Berlin, 2006. 

\item {[Or1]} Ornstein, D. S.: Two Bernoulli shifts with infinite entropy are isomorphic. Advances in Math. 5 1970 339-348 (1970).  

\item{[Or2]} Ornstein, D. S.: A K-automorphism with no square root and Pinsker's conjecture, 
Advances in Math. 10, 89-102. (1973).

\item {[OP]} Ohya, M., Petz, D.: Quantum entropy and its use, Text and monograph in physics, Springer-Verlag 1995. 

\item {[Pau]} Paulsen, V.: Completely bounded maps and operator algebras, Cambridge Studies in Advance Mathematics 78, Cambridge University Press. 2002

\item {[Pa]} Parry, W.: Topics in Ergodic Theory, Cambridge University Press, 1981. 

\item{[Par]} Park, Y. M.: Dynamical entropy of generalized quantum Markov chains. Lett.
Math. Phys., 32, 63–74 (1994).

\item {[Pow]} Powers, R. T.: Representation of uniformly hyper-finite algebras and their associated von-Neumann rings, Ann. Math. 86 (1967), 138-171. 

\item{[Pe]} Petz, D.: Entropy of Markov states. Riv. Mat. Pura Appl., 14, 33–42 (1994).

\item {[Sak]} Sakai, S.: C$^*$-algebras and W$^*$-algebras, Springer 1971.  

\item {[Si]} Sinai, Ja.: On the concept of entropy for a dynamic system. (Russian) Dokl. Akad. Nauk SSSR 124 1959 768-771. 
 
\item {[Sim]} Simon, B.: The statistical mechanics of lattice gases, vol-1, Princeton series in physics (1993). 
 
\item {[SS]} Sinclair, Allan M.; Smith, Roger R.: Finite von Neumann algebras and masas. London Mathematical Society Lecture Note Series, 351. Cambridge University Press, Cambridge, 2008.

\item{[St\o 1]} St\o rmer, E. : Symmetric states of infinite tensor products of $C^*$- algebras. J. Functional Analysis, 3, 48-68 (1969)

\item{[St\o 2]} St\o rmer, E.: A survey of non-commutative dynamical entropy. Classification of nuclear $C^*$-algebras. Entropy in operator algebras, 147-198, Encyclopaedia Math. Sci., 126, Springer, Berlin, 2002.

\item{[Ta1]} Takesaki, M.: Conditional Expectations in von Neumann Algebras, J. Funct. Anal., 9, pp. 306-321 (1972)
 
\item{[Ta2]} Takesaki, M. : Theory of Operator algebras II, Springer, 2001.

\end{itemize}

\end{document}